\newcommand{\e}{\varepsilon}
\renewcommand{\phi}{\varphi}
\newcommand{\ssup}[1] {{\scriptscriptstyle{({#1}})}}
\newcommand{\one}{{\mathbf 1}}
\newcommand{\Prob}{{\mathrm{Prob}}}
\newcommand{\R}{\mathbb R}
\newcommand{\Z}{\mathbb Z}
\newcommand{\N}{\mathbb N}
\newcommand{\E}{\mathbb E}
\renewcommand{\P}{\mathbb P}
\newcommand{\emptypp}{\varnothing}
\newtheorem{theorem}{Theorem}[section]
\newtheorem{lemma}[theorem]{Lemma}
\newtheorem{corollary}[theorem]{Corollary}
\newtheorem{prop}[theorem]{Proposition}
\newcounter{remnr}
\newenvironment{remark}{\refstepcounter{remnr}
{\sf Remark~\arabic{remnr}.\ }\nopagebreak  }%
{\nopagebreak {\hfill{$\diamond$}}\\ }
\renewcommand{\phi}{\varphi}
\renewcommand{\P}{\mathbb{P}}
\renewcommand{\E}{\mathbb{E}}
\newcommand{\Zepm}{Z^{\ssup{e\pm}}_t}
\newcommand{\Za}{Z^{\ssup 1}_t}
\newcommand{\Zb}{Z^{\ssup 2}_t}
\numberwithin{equation}{section}
\begin{document}
\title[A new phase transition in the PAM with partially duplicated potential]{A new phase transition in the parabolic Anderson model \\ with partially duplicated potential}
\author{Stephen Muirhead$^1$}
\address{$^1$Mathematical Institute, University of Oxford (current address: School of Mathematical Sciences, Queen Mary University of London)}
\email{s.muirhead@qmul.ac.uk}
\author{Richard Pymar$^2$}
\address{$^2$Department of Economics, Mathematics and Statistics, Birkbeck, University of London}
\email{r.pymar@bbk.ac.uk}
\author{Nadia Sidorova$^3$}
\address{$^3$Department of Mathematics, University College London}
\email{n.sidorova@ucl.ac.uk}
\subjclass[2010]{60H25 (Primary) 82C44, 60F10 (Secondary)}
\keywords{Parabolic Anderson model, localisation}
\thanks{The first author was supported by the Engineering \& Physical Sciences Research Council (EPSRC) Fellowship EP/M002896/1 held by Dmitry Belyaev. The second author was supported by the EPSRC Grant EP/M027694/1 held by Codina Cotar.}
\date{\today}
\begin{abstract}
We investigate a variant of the parabolic Anderson model, introduced in previous work, in which an i.i.d.\! potential is partially duplicated in a symmetric way about the origin, with each potential value duplicated independently with a certain probability. In previous work we established a phase transition for this model on the integers in the case of Pareto distributed potential with parameter $\alpha > 1$ and fixed duplication probability $p \in (0, 1)$: if $\alpha \ge 2$ the model completely localises, whereas if $\alpha \in (1, 2)$ the model may localise on two sites. In this paper we prove a new phase transition in the case that $\alpha \ge 2$ is fixed but the duplication probability $p(n)$ varies with the distance from the origin. We identify a critical scale $p(n) \to 1$, depending on~$\alpha$, below which the model completely localises and above which the model localises on exactly two sites. We further establish the behaviour of the model in the critical regime.
\end{abstract}

\maketitle

\section{Introduction}
\subsection{The parabolic Anderson model with partially duplicated potential}

Given a potential field $\xi : \Z^d \to \R$, the parabolic Anderson model (PAM) is the solution to the Cauchy problem with localised initial condition 
\begin{align}
\label{eq:PAM} \partial_t u(t,z) &=\Delta u(t,z)+\xi(z)u(t,z), & (t,z)\in (0,\infty)\times \Z^d,\\
\nonumber u(0,z)&=\one_{\{0\}}(z), & z\in\Z^d,
\end{align} 
where $\Delta$ is the discrete Laplacian acting on functions $f:\Z^d\to\R$ by
\begin{align*}
(\Delta f)(z)=\sum_{|y-z|=1}(f(y)-f(z)), \qquad z\in\Z^d, 
\end{align*}
with $|\cdot|$ the standard $\ell_1$ distance. 
\smallskip

The PAM models the interaction between two competing forces: a \emph{smoothing effect} coming from the Laplacian, and a \emph{roughening effect} coming from irregularities in the potential. It is well-known that the PAM exhibits certain intermittent phenomena \cite{GM90, Konig16}, for example if the potential is sufficiently inhomogeneous the solution will tend to concentrate, at typical large times, on a small number of spatially-disjoint clusters of sites \cite{FM, GKM07, KLMS, ST}. If the potential is a random field, the strongest form of this phenomenon is known as \emph{complete localisation}, in which there exists a $\mathbb{Z}^d$-valued process $Z_t$ such that, as $t\to\infty$, 
\begin{align}
\label{e:cl}
 \frac{u(t,Z_t)}{\sum_{z\in\mathbb{Z}^d}u(t,z)}\to1\quad\mbox{in probability.}
 \end{align}

In \cite{MPS} we introduced a variant of the PAM in which the potential is partially duplicated in a symmetric way about the origin; our motivation was to investigate the kind of duplication of a strongly inhomogeneous potential that could cause complete localisation to fail. To the best of our knowledge, this was the first study of the complete localisation phenomenon in the PAM for a potential that was not independent. We recall this model now.
\smallskip

We restrict our attention to the case $d = 1$, and define an auxiliary random field $\xi_0: \Z \to [1, \infty)$ consisting of independent Pareto random variables with parameter $\alpha>1$, that is, with distribution function 
\begin{align*}
F(x)=1-x^{-\alpha}, \quad x \ge 1.
\end{align*}
Abbreviate $\N_0 = \N\cup\{0\}$, and define a random field $\xi : \Z \to [1, \infty)$ by setting 
$\xi(n) =\xi_0(n)$ for each $n\in\N_0$ and, for each $n\in \N$, independently setting
\begin{equation}
\label{eq:pq}
\xi(-n) = \begin{cases}
\xi_0(n)&\mbox{with probability $p(n)$},\\
\xi_0(-n)&\mbox{with probability $q(n)$},
\end{cases}
\end{equation}
where $p:\mathbb{N}\to[0,1]$ are pre-determined duplication probabilities and $q(n)=1-p(n)$. The model we consider is the solution to \eqref{eq:PAM} with partially duplicated potential $\xi$. If $p\equiv 0$ we recover the PAM with i.i.d.\ Pareto distributed potential for which it is known \cite{KLMS} that complete localisation holds (note that the restriction $\alpha > 1$ ensures that the solution exists, see \cite{GM90}). On the other hand, if $p\equiv1$ then a simple symmetry argument shows that complete localisation fails.
\smallskip

In \cite{MPS} we analysed the model for fixed duplication probabilities $p(n)\equiv p \in(0,1)$.  We discovered a phase transition in the parameter $\alpha$: if $\alpha\ge2$ the PAM exhibits complete localisation for any $p \in (0,1)$, whereas if $\alpha \in (1, 2)$ there is a positive probability (depending on $p$) that the model has non-negligible mass on exactly two sites at typical large times. The fact that the critical value of $\alpha = 2$ does not depend on the value of $p$ surprised us, and led naturally to the question we consider in this paper:
\begin{quote}
\textit{In the case $\alpha \ge 2$, might complete localisation fail if we allow $p(n)$ to vary with $n$?}
\end{quote}

It is easy to guess a reasonable answer to this question: since fixed $p(n) \equiv p \in(0,1)$ results in complete localisation, and for $p \equiv 1$ complete localisation fails, it is natural to expect that complete localisation will fail if $p(n) \to 1$ sufficiently quickly. This intuition turns out to be correct, and our main result identifies precisely a critical scale $p(n) \to 1$, depending on $\alpha$, below which the model completely localises and above which the model localises on exactly two sites with overwhelming probability; in contrast to the case of fixed $p(n) \equiv p \in (0, 1)$, these outcomes happen with overwhelming probability. We complete the picture by analysing the behaviour of the model for $p(n) \to 1$ \emph{at} the critical scale. 
\smallskip

For simplicity, we have decided to omit the case $\alpha \in (1, 2)$ from our results. Although this case can also be treated with our methods, it is less interesting because it lacks a critical scale. Indeed for any $p(n) \to 1$ the model localises on exactly two sites with overwhelming probability; this is completely natural in view of our results in \cite{MPS}. We have also chosen not to treat the case $d \ge 2$; there are major additional technicalities present in higher dimensions (see the comments in \cite{MPS} for more detail), and we leave this for future work.

\subsection{The phase transition in the model}\label{subsecphase}
We now state our results more formally. For the remainder of the paper we assume $\alpha\ge2$. Let $p:\N\to [0,1]$ be an eventually increasing sequence such that $p(n)\to 1$ as $n\to \infty$, and define $q(n)=1-p(n)$. We denote by the same symbols the functions $p, q: \mathbb{R}_+ \to [0, 1]$ defined by $p(x) = p(\lceil x \rceil)$ and $q(x) = q(\lceil x \rceil)$ (and similarly for all functions defined on $\N$ in the sequel).\smallskip

Let $\xi$ be defined as at \eqref{eq:pq} using the sequence of duplication probabilities $p(n)$. We henceforth refer to $\xi$ as the potential, and denote its corresponding probability and expectation by $\Prob$ and $\mathrm{E}$ respectively. \smallskip

It follows from~\cite{GM90} by the same argument as in the i.i.d.\! case that the solution to \eqref{eq:PAM} exists since $\alpha>1$, and is given by the Feynman-Kac formula
\begin{align*}
u(t,z)=\E\Big[\exp\Big\{\int_0^t\xi(X_s)ds\Big\}\one\{X_t=z\}\Big], \qquad (t,z)\in (0,\infty)\times\Z,
\end{align*}
where $(X_t)_{t\ge 0}$ is a continuous-time random walk on $\Z$ with generator $\Delta$ started at the origin and $\P$ and $\E$ are its corresponding probability and expectation. We denote by 
\begin{align*}
U(t) = \sum_{z\in\Z}u(t,z)
\end{align*}
the total mass of the solution.
\smallskip

Let $D=\{z\in \N_0 : \xi(z)=\xi(-z)\}$ denote the set of positive integers whose potential values are \emph{duplicated}, and $E=\N_0\backslash D$ the set of integers whose potential values are unique (or \emph{exclusive}) to them. Note that this notation differs slightly to \cite{MPS}, where $D$ and $E$ were subsets of $\Z$ rather than~$\mathbb{N}_0$. For each $t>0$, define the functional $\Psi_t:\Z\to \R$ by 
\begin{align*}
\Psi_t(z)=\xi(z)-\frac{|z|}{t}\log \xi(z). 
\end{align*}
Let $\Za\in D$ be a maximiser of $\Psi_t$ over $D$; the proof of the existence of $\Za$ is standard since $\alpha > 1$ (see for instance \cite[Lem.\ 3.2]{MPS}). Again this notation differs slightly to \cite{MPS}, where $\Za$ was defined as a maximiser of $\Psi_t$ over $\N$. 
\smallskip

Our first result is to establish, in all regimes, that the solution of the model eventually concentrates, at typical large times, on the sites $\Za$ and $-\Za$. This result is similar to \cite[Thm.\ 1.2]{MPS}, and actually holds for any $p(n) \in [0, 1]$ and $\alpha > 1$; to avoid complications we treat formally only the case of eventually increasing $p(n) \to 1$ and $\alpha \ge 2$.

\begin{theorem}[Localisation of the model]
\label{t:main0}
As $t\to\infty$,
\begin{align*}
\frac{1}{U(t)}\big[u(t,\Za)+u(t,-\Za)\big]\to 1\qquad\text{in probability}.
\end{align*}
\end{theorem}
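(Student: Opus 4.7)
The plan is to adapt the path-expansion localisation strategy used in \cite{KLMS, MPS}, with the point that partial duplication forces the leading contribution to come from the symmetric pair $\{Z_t^{\ssup 1}, -Z_t^{\ssup 1}\}$ rather than a single site.

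First I would carry out the standard coarse localisation: choose a deterministic cut-off $r_t$ of order $t^{\alpha/(\alpha-1)}$ (up to logarithmic corrections) and show that, with probability tending to one, the total mass from $\{z : |z| > r_t\}$ is negligible relative to $U(t)$. The arguments of \cite[Sec.~3]{MPS} apply essentially verbatim, since they depend only on Pareto tails and random-walk large-deviation bounds. Within $[-r_t, r_t]$, a Feynman-Kac path expansion yields, for each site $z$ in this window, an upper bound of the form $u(t,z) \le \exp(t \Psi_t(z) + o(t \Psi_t(Z_t^{\ssup 1})))$ and a matching lower bound at the maximiser, so that comparing $u$-values reduces to comparing $\Psi_t$-values, provided the competing values are separated by a gap which dominates the path-expansion error.

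The crux of the argument is therefore the gap statement: with probability tending to one, $\Psi_t(Z_t^{\ssup 1})$ exceeds every other $\Psi_t$-value in $[-r_t, r_t]$ by such a gap. Two types of competitor have to be dealt with. The first is the second-largest value of $\Psi_t$ over $D \cup (-D)$, which by the symmetry of the duplicated potential is attained simultaneously at the second-best site and its reflection; this is treated by standard Pareto order-statistic estimates as in \cite{KLMS, MPS}. The second, specific to the present set-up, is the maximum of $\Psi_t$ over the exclusive sites (i.e.\ sites whose potential value is not duplicated by reflection). Here one uses $p(n) \to 1$: the number of exclusive sites in $[-r_t, r_t]$ is of order $r_t q(r_t) = o(r_t)$, so the maximum of $\xi$ over them is smaller than the maximum over $D$ by a factor $q(r_t)^{-1/\alpha}$; propagating this through $\Psi_t$, with the help of the eventual monotonicity of $p$ to obtain a uniform bound across dyadic scales, then yields a gap growing sufficiently fast.

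Combining the path expansion with the gap estimates gives $u(t,z) / U(t) \to 0$ for every $z \ne \pm Z_t^{\ssup 1}$ in $[-r_t, r_t]$, and together with the coarse localisation outside this window this proves the theorem. Note that we do not need each of $u(t, Z_t^{\ssup 1})$ and $u(t, -Z_t^{\ssup 1})$ to individually capture a non-negligible fraction of $U(t)$; only the sum matters at this level, which keeps the argument clean. The main obstacle is the control of the exclusive-site contribution: one must rule out that a very large but exclusive Pareto value at a small scale beats $\Psi_t(Z_t^{\ssup 1})$ in spite of having few competitors, and the monotonicity of $p$ is used essentially in this step.
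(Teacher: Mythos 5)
Your proposal is correct and takes essentially the same route as the paper: a trimming/path-expansion step showing that, on a gap event, all mass except that at $\{\Za,-\Za\}$ is negligible, combined with a proof that the gap event holds with overwhelming probability, where the only genuinely new ingredient beyond \cite{KLMS,MPS} is that the exclusive (non-duplicated) sites are too sparse to compete -- the paper packages this last point as convergence of the rescaled exclusive potential to the empty point process, while you argue it by counting and Pareto order statistics, which is the same idea. One small slip: the number of exclusive sites in $[-r_t,r_t]$ is of order $\eta(r_t)=\sum_{z\le r_t}q(z)$ rather than $r_t\,q(r_t)$ (for eventually decreasing $q$ the latter can be much smaller, e.g.\ $q(n)=1/n$), but this is harmless since all that is needed is $\eta(r_t)=o(r_t)$, which gives an exclusive maximum of order $\eta(r_t)^{1/\alpha}=o(a_t)$ and hence domination by $\Psi_t(\Zb)$; the dyadic-scale worry is also unnecessary because $\Psi_t(z)\le\xi(z)$ bounds every exclusive competitor by this single maximum.
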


The next series of results -- the main results of the paper -- establish the phase transition in the model, identifying the critical scale $p(n) \to 1$ below which the model completely localises (i.e.\ localisation on one site) and above which the model localises on exactly two sites.
 \smallskip
It is most convenient to define the critical scale $p(n) \to 1$ by reference to the rate of growth of the increasing function
\begin{align}
\label{e:eta}
\eta(n)  = \sum_{z=1}^n q(z) =  \int_0^n  q(x) \, dx , \quad n \in \mathbb{N}_0,
\end{align}
which is the expected value of the counting function $N(n)$ defined as 
\[
 N(n)=\sum_{z=1}^{n}\one\{z\in E\}.
\] The critical scale for $\eta(n)$ is then given by the function $\kappa:\mathbb{N}\to\mathbb{R}^+$
\begin{align}
\label{e:kappa}
\kappa(n):=\begin{cases}
n^{2/\alpha}&\mbox{if $\alpha>2$},\\
\frac{n}{\log n}&\mbox{if $\alpha=2$}.
\end{cases}
\end{align}
Here and in the sequel we use $f(x) = o(g(x))$ and $f(x) \ll g(x)$ interchangeably to denote that $f(x)/g(x) \to 0$ as $x \to \infty$. Similarly, we use $f(x) \sim g(x)$ to denote that $f(x)/g(x) \to 1$ as $x \to \infty$.

\begin{theorem}[Subcritical regime]
\label{t:main1}
Let $\eta(n)\ll \kappa(n)$. As $t\to\infty$,
\begin{align*}
\frac{u(t,\Za)}{u(t,-\Za)}\to 1\qquad\text{in probability.}
\end{align*}
\end{theorem}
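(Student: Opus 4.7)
The first observation is that, by Theorem~\ref{t:main0}, the total mass $U(t)$ is concentrated on $\{\Za,-\Za\}$, so the task reduces to showing that
\begin{align*}
\log u(t,\Za) - \log u(t,-\Za) \to 0 \quad\text{in probability.}
\end{align*}
Since $\Za\in D$ by definition, $\xi(\Za)=\xi(-\Za)$: the two peaks have identical heights. Applying the reflection $X_s\mapsto -X_s$ to the Feynman--Kac representation yields $u(t,-\Za) = \tilde u(t,\Za)$, where $\tilde u$ is the PAM with ``flipped'' potential $\tilde\xi(z) := \xi(-z)$. Crucially, $\xi$ and $\tilde\xi$ agree on $D\cup(-D)$ and differ only on $E\cup(-E)$, so the log-ratio is driven entirely by exclusive sites.

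My plan is to work on a good event $G_t$ of probability tending to $1$ encoding three features of the potential. First, a gap estimate: the maximum of $\Psi_t$ over $E$ in the relevant box is strictly dominated by $\Psi_t(\Za)$, with the gap tending to infinity. Second, a scale estimate pinning $\Za$ at the standard Pareto localisation scale $r(t)$ and its peak height at $\xi(\Za)\sim r(t)^{1/\alpha}$. Third, a size bound $\max_{z\in E\cap[1,r(t)]}\xi(z) \lesssim \eta(r(t))^{1/\alpha}$, which by the subcritical hypothesis $\eta(n)\ll\kappa(n)$ is much smaller than $\xi(\Za)$. The second item is a standard Pareto extreme-value fact; the first and third are where the subcritical assumption actually enters the argument.

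With $G_t$ in hand, I would convert the potential-level gap into a PAM-level ratio estimate via a Duhamel / spectral perturbation argument. Writing $H=\Delta+\xi$, $\tilde H=\Delta+\tilde\xi$, and $V=\tilde\xi-\xi$ (supported on $E\cup(-E)$), Duhamel's formula yields
\begin{align*}
u(t,\Za)-\tilde u(t,\Za) = \int_0^t\sum_{w\in E\cup(-E)} V(w)\,u(s,w)\,\tilde p_{t-s}(w,\Za)\,ds,
\end{align*}
where $\tilde p_r$ is the Feynman--Kac kernel for $\tilde H$. On $G_t$, each factor $|V(w)|\cdot u(s,w)\cdot\tilde p_{t-s}(w,\Za)$ is much smaller than $u(t,\Za)\sim \exp(t\Psi_t(\Za))$, because $\Psi_t(w) < \Psi_t(\Za)$ with a uniform gap for $w\in E\cup(-E)$. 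Summing the Duhamel remainder with this exponential improvement -- following the spirit of the comparison arguments in~\cite{MPS} -- then gives $\log u(t,\Za)-\log \tilde u(t,\Za) = o(1)$, as required.

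The main obstacle is the precise matching of scales: the exponent $2/\alpha$ in $\kappa(n)$ (and the $\log n$ correction at $\alpha=2$) is exactly the balance point between the $\xi$-boost available at an exclusive site and the $|z|\log\xi/t$-penalty in $\Psi_t$. Crude first-moment or union bounds would sacrifice logarithmic factors that one cannot afford in the borderline case $\alpha=2$, so the argument must exploit the sparsity of $E$ sharply. Controlling the Duhamel remainder uniformly in $s\in[0,t]$ and $w\in E$, including the possibility of atypically large exclusive $\xi$-values and the combinatorics of many exclusive sites lying along the transit corridor from $0$ to $\Za$, will be the technically most delicate step.
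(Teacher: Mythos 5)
Your reduction by reflection is fine ($u(t,-\Za)=\tilde u(t,\Za)$ with $\tilde\xi(z)=\xi(-z)$, and $V=\tilde\xi-\xi$ supported on the exclusive sites), but the core quantitative step fails: controlling the Duhamel remainder by termwise bounds is a triangle-inequality (first-moment) estimate, and that is off by a polynomial factor, not just the logarithms you worry about at $\alpha=2$. The dominant contributions to $\int_0^t\sum_w \tilde p_{t-s}(\Za,w)V(w)u(s,w)\,ds$ come from trajectories that pass \emph{briefly} through an exclusive site $w$ between $0$ and $\pm\Za$ and spend the bulk of $[0,t]$ at the peak; such a term is suppressed relative to $u(t,\Za)$ only by a factor of order $|V(w)|/(\xi(\Za)-\xi(w))\approx\xi(\Za)^{-1}$, not by the exponential gap $\exp\{-t(\Psi_t(\Za)-\Psi_t(w))\}$ — the $\Psi_t$-gap penalises paths that \emph{localise} at $w$, not paths that merely transit through it. Summing absolute values over the $\approx\eta(\Za)$ exclusive sites (and over $s$) therefore gives a bound of order $\eta(\Za)/\xi(\Za)\approx\eta(r_t)/r_t^{1/\alpha}$, which does not tend to zero in most of the subcritical regime (take $\eta(n)=n^{3/(2\alpha)}$ with $\alpha>2$). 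The true smallness of the log-ratio is a cancellation effect: the contributions of $z$ and $-z$ are mean-zero and conditionally independent, so the aggregate fluctuates on the scale $\eta(\Za)^{1/2}/\xi(\Za)$, and it is exactly this second-moment computation that produces the threshold $\kappa(n)=n^{2/\alpha}$ (resp.\ $n/\log n$); see \eqref{heur1}--\eqref{heur2}. A structural sanity check that your plan cannot close as stated: all three features of your good event $G_t$ also hold in part of the supercritical regime — the gap $\Psi_t(\Zepm)<\Psi_t(\Zb)<\Psi_t(\Za)$ and the bound $\max_{z\in E\cap[1,r_t]}\xi(z)\approx\eta(r_t)^{1/\alpha}\ll\xi(\Za)$ need only $\eta(r_t)\ll r_t$, which is automatic because $q(n)\to0$ — yet there the conclusion is false by Theorem~\ref{t:main2}. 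So no argument consuming only those inputs, without exploiting the signed, symmetric structure of the exclusive-site contributions, can prove the theorem.

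For comparison, the paper does not run a perturbation bound at all: after trimming the path set it proves the representation $\log\big(u(t,\Za)/u(t,-\Za)\big)=Q_t+o(1)$ on a typical event (Proposition~\ref{p:pq}), where $Q_t=\sum_{z\in\mathcal{K}_t^+}Q_t(z)-\sum_{z\in\mathcal{K}_t^-}Q_t(z)$ and $Q_t(z)=-\log\big(1-\xi(z)/\xi(\Za)\big)$; conditionally on $\mathcal{F}_t$ these summands are i.i.d., $\mathrm{E}_{\mathcal{F}_t}Q_t=0$ because $|\mathcal{K}_t^+|=|\mathcal{K}_t^-|$, and Chebyshev together with $\mathrm{Var}_{\mathcal{F}_t}Q_t\to0$ in the subcritical regime (Proposition~\ref{p:clt}) and $\Prob(\mathcal{E}_t)\to1$ (Proposition~\ref{p:e}) finishes the proof. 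If you wish to salvage a Duhamel-style argument you would have to retain the first-order term as a \emph{signed} sum, show it has conditional mean zero and vanishing conditional variance, and separately control the higher-order remainder — at which point you have essentially reconstructed $Q_t$ and the paper's variance computation.
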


\begin{theorem}[Supercritical regime]
\label{t:main2}
Let $\eta(n)\gg \kappa(n)$. As $t\to\infty$,
\begin{align*}
\Big|\log\frac{u(t,\Za)}{u(t,-\Za)}\Big|\to \infty\qquad\text{in probability.}
\end{align*}
\end{theorem}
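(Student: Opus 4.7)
The plan is to reduce the log-ratio to a sum of (conditionally) independent mean-zero contributions from the exclusive sites in $[1, \Za)$, and then to show that the hypothesis $\eta \gg \kappa$ forces its variance to diverge. By Theorem~\ref{t:main0}, the mass is asymptotically on the two sites $\pm\Za$. Since $\Za \in D$, they share a common potential value $r_t := \xi(\Za) = \xi(-\Za)$, so the asymmetry between $u(t,\Za)$ and $u(t,-\Za)$ must originate at the intermediate sites; and since $\xi(y) = \xi(-y)$ for $y \in D$, the imbalance is carried entirely by the exclusive sites in $E \cap [1, \Za)$. The number of such sites has mean $\eta(\Za)$, which by hypothesis grows faster than the critical scale $\kappa(\Za)$ associated with the typical size of $r_t$.

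The core technical step, to be carried out by adapting the spectral/path expansion of \cite{MPS}, is an asymptotic identity of the form
\begin{align*}
\log \frac{u(t, \Za)}{u(t, -\Za)} \;=\; t\bigl(\lambda_+^{(t)} - \lambda_-^{(t)}\bigr) \;+\; S_t \;+\; o(1),
\end{align*}
with $\lambda_\pm^{(t)}$ the principal eigenvalues of $\Delta + \xi$ restricted to neighbourhoods of $\pm\Za$, and
\begin{align*}
S_t \;:=\; \sum_{y \in E,\; 1 \le y < \Za} \Bigl[\log\bigl(r_t - \xi(-y)\bigr) - \log\bigl(r_t - \xi(y)\bigr)\Bigr],
\end{align*}
capturing the asymmetric decay of the two local eigenvectors from $\pm\Za$ to the origin. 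The bracketed terms vanish identically for $y \in D$, so the sum is effectively over exclusive sites only.

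The probabilistic analysis then focuses on $S_t$. For $y \in E$ the variables $\xi(y)$ and $\xi(-y)$ are i.i.d.\ Pareto, so each summand has mean zero conditional on $r_t$ and $\Za$. After truncating at a scale $M_t \to \infty$ with $M_t \ll r_t$ (to discard the rare sites where the potential is comparable to $r_t$) and linearising $\log(r_t - \xi) \approx \log r_t - \xi/r_t$, the conditional variance per summand is of order $1/r_t^2$ when $\alpha > 2$, and of order $\log r_t / r_t^2$ when $\alpha = 2$ (since the truncated variance of $\xi$ diverges like $\log M_t$). The scaling $r_t \asymp \Za^{1/\alpha}$ of the maximiser (with a $\log$ correction when $\alpha = 2$) is precisely matched by the choice of $\kappa$ in~\eqref{e:kappa}, yielding
\begin{align*}
\operatorname{Var}\bigl(S_t \mid r_t, \Za\bigr) \;\asymp\; \frac{\eta(\Za)}{\kappa(\Za)} \;\longrightarrow\; \infty
\end{align*}
in probability, by the supercritical hypothesis. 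A conditional Lindeberg CLT then gives $|S_t| \to \infty$ in probability. A final short argument is needed to check that the eigenvalue contribution $t(\lambda_+^{(t)} - \lambda_-^{(t)})$, which is determined essentially by the immediate neighbours of $\pm\Za$, is asymptotically independent of the bulk sum $S_t$ and so cannot systematically cancel its divergence.

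The main obstacle is making the representation formula rigorous, and in particular controlling the rare exclusive sites $y$ at which $\xi(\pm y)$ is anomalously close to or exceeds $r_t$. Such sites disrupt both the spectral decomposition underpinning the identity and the linearisation used in the variance computation. Since $\eta(n) \gg \kappa(n)$ now produces many more exclusive sites than in the regime of \cite{MPS}, showing that the expected number of bad sites is $o(1)$ requires combining Pareto tail estimates with the scaling of $\Za$ more delicately, and some of the path-counting estimates from \cite{MPS} may need to be rederived in the new regime $p(n) \to 1$. A secondary technical issue is verifying that the usual scalings of $\Za$ and $r_t$ persist under the present hypothesis, which should follow from arguments parallel to those in \cite{MPS}.
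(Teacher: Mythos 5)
Your overall strategy -- localise on $\pm\Za$, write the log-ratio as a centred sum over exclusive sites between the origin and $\Za$, compute that its conditional variance is of order $\eta(\Za)/\kappa(\Za)\to\infty$, and conclude via a conditional Lindeberg CLT -- is indeed the paper's strategy at the heuristic level. However, your core technical step, the representation of $\log\big(u(t,\Za)/u(t,-\Za)\big)$ as the sum $S_t$ over \emph{all} exclusive sites in $E\cap[1,\Za)$ plus an eigenvalue correction, is precisely the step that fails in the supercritical regime, and the paper says so explicitly: the expansion over direct-type paths breaks down here and a modification is needed. The reason is quantitative: to justify such a representation one must discard paths making extra visits (loops) at the selected sites, and the relevant combinatorial bound (in Lemma~\ref{L:nullpaths}, the requirement that $g_t\eta(r_t)/(\theta_t^{\alpha}a_t^2f_t^2)\to 0$) fails if the selected set is all of $E\cap[1,\Za]$, i.e.\ if $\theta_t=1$, because in the supercritical regime there are $\approx\eta(\Za)\gg\kappa(\Za)$ such sites and paths revisiting them carry a non-negligible fraction of the mass. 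The missing idea is the paper's threshold construction: one sums only over the set $\mathcal{K}_t$ of exclusive sites with $\xi(z)>\theta_t$, where $\theta_t\to\infty$ is tuned (separately for $\alpha>2$ and $\alpha=2$) so that the path expansion of Proposition~\ref{p:pq} can still be controlled while the conditional fluctuations of the restricted sum $Q_t$ still, just barely, diverge. Your proposed truncation at $M_t\ll\xi(\Za)$ is an \emph{upper} truncation aimed at the heavy-tail CLT and does not address this obstruction, which is the genuine gap in the proposal.

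A second, smaller divergence concerns the remainder term. In the supercritical regime the residual $P_t$ is not symmetric (in general $|\mathcal{K}_t^+|\neq|\mathcal{K}_t^-|$), and the paper never proves any independence between it and the fluctuating sum: it only needs $P_t$ to be $\mathcal{F}_t$-measurable, after which the uniform conditional CLT in Proposition~\ref{p:clt} together with $\mathrm{Var}_{\mathcal{F}_t}Q_t\to\infty$ yields an anti-concentration statement -- conditionally on $\mathcal{F}_t$, the value $Q_t+P_t$ falls in any interval of fixed length with vanishing probability, whatever the ($\mathcal{F}_t$-measurable) shift $P_t$ is. Your plan to establish asymptotic independence of the eigenvalue difference from $S_t$ is both vaguer and stronger than what is needed, and would likely be awkward to prove. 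Finally, note that since $\Za$ maximises $\Psi_t$ only over $D$, exclusive sites with $\xi(\pm y)\ge\xi(\Za)$ are possible, so your summands $\log(\xi(\Za)-\xi(\pm y))$ can be undefined; this is handled in the paper by indicators and the high-probability event $\mathcal{E}^1_t$, but it needs to be said.
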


Remark that, in the supercritical regime, Theorems \ref{t:main0} and \ref{t:main2} together imply that the model completely localises  (i.e.\ equation \eqref{e:cl} holds), whereas in the subcritical regime, Theorems \ref{t:main0} and \ref{t:main1} together imply that the model localises on exactly two sites (note that $\Za \neq 0$ eventually almost surely by Lemma \ref{l:0} below). Indeed in the supercritical regime we actually show more, namely that the solution is, for large times, distributed approximately evenly across two sites. \smallskip

Our final result establishes the behaviour of the model in the critical regime. In this regime the model also localises on two sites, but unlike in the subcritical regime the amount of mass on each site is random; we exhibit a limit theorem for this behaviour.

\begin{theorem}[Critical regime]
\label{t:main3}
Let $\eta(n)\sim \beta \kappa(n)$ with $\beta>0$. As $t\to\infty$,
\begin{align*}
\frac{u(t,\Za)}{u(t,-\Za)}\Rightarrow \exp\big\{\sqrt{2\beta}\sigma BN\big\},
\end{align*}
where $\Rightarrow$ denotes weak convergence, $\sigma$ is the positive constant defined by
\[ \sigma^2 :=\begin{cases}
\frac{\alpha}{(\alpha-2)(\alpha-1)^2}&\mbox{if }\alpha>2, \\
1&\mbox{if }\alpha=2,
\end{cases} \]
$B$ is a random variable defined through the weak limit 
\begin{align*}
B\stackrel{\mathcal{L}}{=}\lim_{t\to\infty}\frac{(\Za)^{1/\alpha}}{\xi(\Za)}
\end{align*}
which exists and is strictly positive almost surely, and $N$ is an independent standard normal random variable. In Proposition~\ref{LB1} we give the explicit law of the random
variable $B$.
\end{theorem}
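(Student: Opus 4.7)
The strategy is to derive an explicit representation of $\log[u(t,\Za)/u(t,-\Za)]$ as a sum over exclusive sites, and then apply a central limit theorem. The starting point is a refined, eigenvector-style approximation
\[
\frac{u(t,\Za)}{u(t,-\Za)}\;=\;(1+o(1))\prod_{y=1}^{\Za-1}\frac{\xi(\Za)-\xi(-y)}{\xi(\Za)-\xi(y)}
\]
that I expect to hold with overwhelming probability. The exponential factor $e^{t\xi(\Za)}$ cancels between numerator and denominator because $\xi(\Za)=\xi(-\Za)$ (recall $\Za\in D$), so only the ratio of eigenvector normalisations survives; this ratio is given by the Green's-function-type product formula standard in the PAM literature \cite{KLMS,MPS}. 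Obtaining this approximation to $o(1)$ precision \emph{in the logarithm} is the first main technical task and will require sharpening the spectral/geometric bounds underlying Theorem~\ref{t:main0}.

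Taking logarithms and applying the Taylor expansion $\log(1-x)=-x+O(x^2)$ for $x=\xi(\pm y)/\xi(\Za)$, the contributions from duplicated sites cancel (since $\xi(y)=\xi(-y)$ for $y\in D$), leaving
\[
\log\frac{u(t,\Za)}{u(t,-\Za)}\;=\;\frac{1}{\xi(\Za)}\sum_{y\in E,\,1\le y<\Za}\bigl(\xi_0(y)-\xi_0(-y)\bigr)\;+\;o(1).
\]
For $y\in E$ the summands are i.i.d.\ symmetric, being differences of two independent Pareto($\alpha$) variables. The quadratic Taylor remainder is delicate when $\alpha$ is close to $2$ and must be controlled by truncating potential values at a level just below $\xi(\Za)$ and conditioning on the order statistics of $\xi$ in $[1,\Za]$.

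The proof then reduces to a central limit theorem for the above sum. The cardinality $|E\cap[1,\Za]|$ concentrates around $\eta(\Za)\sim\beta\kappa(\Za)$ by a Chebyshev estimate, and each summand has variance $2\sigma^2$ for $\alpha>2$ (from $\mathrm{Var}(\mathrm{Pareto}(\alpha))=\alpha/[(\alpha-2)(\alpha-1)^2]$). Conditionally on the pair $(\Za,\xi(\Za))$, the classical CLT yields
\[
\frac{1}{\sqrt{\eta(\Za)}}\sum_{y\in E,\,1\le y<\Za}\bigl(\xi_0(y)-\xi_0(-y)\bigr)\;\Rightarrow\;\sqrt{2}\,\sigma\,N,
\]
and since $\kappa(n)=n^{2/\alpha}$,
\[
\log\frac{u(t,\Za)}{u(t,-\Za)}\;\approx\;\sqrt{2\beta}\,\sigma\,N\cdot\frac{(\Za)^{1/\alpha}}{\xi(\Za)}\;=\;\sqrt{2\beta}\,\sigma\,B\,N.
\]
For $\alpha=2$ the Pareto variance is infinite and a Lindeberg-Feller triangular-array CLT must be applied to the truncated sum; the slowly varying correction in the truncated variance is precisely what dictates the scale $\kappa(n)=n/\log n$ and produces $\sigma^2=1$.

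The weak limit of $B=(\Za)^{1/\alpha}/\xi(\Za)$ and its strict positivity then follow from classical extreme value theory for Pareto potentials applied to the near-maximiser $(\Za,\xi(\Za))$ of $\Psi_t$, while the asymptotic independence of $N$ and $B$ reflects the standard fact that the extremes of an i.i.d.\ sequence are asymptotically independent from the sum of the remaining terms. The main obstacle throughout is maintaining $o(1)$ accuracy in the logarithm when $\alpha$ is small: heavy tails mean individual exclusive values $\xi_0(\pm y)$ can approach $\xi(\Za)$, and the $\alpha=2$ case additionally requires a careful truncation argument to compensate for the non-existent variance.
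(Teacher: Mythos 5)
Your overall strategy coincides with the paper's: represent $\log\big[u(t,\Za)/u(t,-\Za)\big]$ as the sum over exclusive sites of $\log(1-\xi(-y)/\xi(\Za))-\log(1-\xi(y)/\xi(\Za))$ up to an $o(1)$ error, and then apply a central limit theorem conditionally on the peak, identifying the limiting variance $2\beta\sigma^2B^2$; this is exactly \eqref{heur1} together with Propositions \ref{p:pq} and \ref{p:clt}. The genuine gap is your first step. The product formula with $o(1)$ accuracy \emph{in the logarithm} is asserted (``I expect to hold'', ``standard in the PAM literature'') but never argued, and it is the cornerstone of the whole proof, occupying Sections \ref{sec:paths} and \ref{sec:iso}. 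It cannot be obtained by ``sharpening the spectral/geometric bounds underlying Theorem \ref{t:main0}'': those bounds only isolate the sites $\pm\Za$ and say nothing about the ratio to $o(1)$ precision. In the Feynman--Kac path expansion, the product coming from single visits to the exclusive sites does \emph{not} capture the dominant contribution in the critical regime (unlike in \cite{MPS} for $\alpha<2$); the paper must allow up to $w_t=g_t^3/f_t^2\to\infty$ extra visits to $\mathcal{K}_t$ (Lemma \ref{L:nullpaths}) and then show that the resulting occupation-time integrals factor out symmetrically between the paths ending at $\Za$ and at $-\Za$. That requires the event $\mathcal{E}_t^{cr}$, i.e.\ laws of large numbers for $M_t^{\pm}$ and $\Sigma_t^{\pm}$ at precision $a_t^{-1}$ and $a_t^{-2}$ (Lemma \ref{l:cr}), a conditional CLT for the accumulated waiting times (Lemma \ref{l:normal}), and control of the extra-visit weights through the top exclusive value $\zeta_t$ (Lemma \ref{l:zeta}). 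Without a substitute for this analysis your argument does not get started.

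Two further points. Your Taylor expansion is not justified as written: the summed quadratic remainders are of order $\xi(\Za)^{-2}\sum_{y\in E\cap[1,\Za]}\xi(\pm y)^2\asymp\eta(\Za)/\xi(\Za)^2$, which converges to a positive multiple of $B^2$ and is therefore of constant order, not $o(1)$; to salvage the expansion you would need to exploit the $\pm$ symmetry of the remainder itself via a second fluctuation estimate, whereas the paper avoids expanding altogether and computes the conditional moments of $Q_t(z)=-\log(1-\xi(z)/\xi(\Za))$ exactly in Section \ref{sec:fluct} --- this computation is also what produces $\sigma^2$ and the logarithmic correction at $\alpha=2$ without any ad hoc truncation. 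Finally, the independence of $N$ and $B$ is not an asymptotic ``extremes versus sums'' phenomenon: it is exact, because $\Za$ is defined as the maximiser of $\Psi_t$ over the duplicated set $D$ only, so conditionally on $\mathcal{F}_t$ the exclusive values entering the sum are i.i.d.\ Pareto and independent of $(\Za,\xi(\Za))$; the paper flags this as the key structural simplification, and conditioning instead on $(\Za,\xi(\Za))$ as a global near-maximiser, as you propose, would tilt the conditional law of the summands and require additional justification.
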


\begin{remark}
The condition that $p(n) \to 1$ is eventually increasing is mostly technical, and it could be replaced by other regularity assumptions without significant change to the results. In the non-critical regimes we use this condition only in the proof of Lemma~\ref{l:etaeta}, and here it would be sufficient to assume instead that $\eta$ is regularly varying. In the critical regime we additionally use this condition in the proof Lemma \ref{1101}, which allows us to deduce the asymptotic behaviour of $q(n)$ from that of $\eta(n)$. Without this condition we would need to assume, rather than deduce, that $q(n)$ satisfies this asymptotic behaviour. \end{remark}

\begin{remark}
Observe that, in the case $\alpha > 2$, the constant $\sigma^2$ in Theorem \ref{t:main3} is the variance of $\xi(0)$, a Pareto random variable with parameter $\alpha$. Naturally, this constant approaches infinity as $\alpha \downarrow 2$, and in the case $\alpha = 2$ we instead need to change the scale $\kappa(n)$ by a logarithmic factor in order to get a non-trivial limit for the ratio $u(t, \Za)/u(t, -\Za)$.
\end{remark}

\begin{remark}
As in \cite{MPS}, our results can be recast as a demonstration of the robustness, or lack thereof, of the total mass of the solution of the PAM with i.i.d.\ potential under a resampling of some of the potential values.  More precisely, suppose $u(t, z)$ denotes the solution of the PAM on $\mathbb{Z}$ with the i.i.d.\ potential~$\xi_0$, with $U(t) = \sum_z u(t, z)$ the total mass of the solution. Now resample each potential value $\xi(z)$ independently with probability $q(|z|)$, and let $\tilde u(t, z)$ be the solution of the PAM with this resampled potential, with $\tilde{U}(t) = \sum_z \tilde{u}(t, z)$ the total mass of the solution. Defining $\eta$ and $\kappa$ as at~\eqref{e:eta} and~\eqref{e:kappa}, our methods, suitably translated, demonstrate the following phase transition. If $\eta(n) \ll \kappa(n) $, then $U(t)/\tilde U(t) \to 1$ in probability. By contrast, if $\eta(n) \gg \kappa(n)$, then $|\log  U(t)/\tilde U(t)| \to\infty$ in probability.
\end{remark}

\subsection{Heuristics for the critical scale}
\label{sec:heur}
It is well-known \cite{KLMS} that the solution to the PAM with i.i.d.\ Pareto potential is sharply peaked, with the peaks having first-order approximation
\[ \log u(t,z)\approx t\Psi(t)=t\xi(z)-|z|\log\xi(z). \]
As discussed in \cite{MPS}, the PAM with partially duplicated potential is difficult to analyse because there can be two sites which maximise this functional, and so in order to distinguish these we must turn to the second-order contributions; since these depend on the potential values along paths to $\Za$ and $-\Za$, they are challenging to understand. Although second-order contributions were also studied in \cite{MPS}, our results require a significantly finer analysis, particularly in the critical regime, which adds additional complications. \smallskip

We now present heuristics demonstrating how the second-order contributions give rise to the phase transition in Theorems \ref{t:main1} and~\ref{t:main2}. \smallskip

The Feynman-Kac formula allows us to decompose the total solution $U(t)$ into contributions from each geometric path from the origin. In \cite{MPS} we showed that in the case $\alpha<2$ it is sufficient to consider only the direct paths to $\Za$ and $-\Za$, in the sense that the contribution to $U(t)$ from all other paths is negligible in comparison. On the contrary, for $\alpha\ge2$ we believe this to no longer be the case, and we must consider paths with \emph{loops} in order to capture the dominant contribution. 
\smallskip

Despite this, we show in Proposition~\ref{p:pq} that, in the subcritical and critical regimes at least, we can adequately control these loops so as to represent $u(t,\Za)/u(t,-\Za)$ as a sum over all non-duplicated sites, namely by writing
\begin{align}
\label{heur1}
\log \frac{u(t,\Za)}{u(t,-\Za)}\approx\sum_{z\in E \cap[1,\Za] }\left[\log\left(1-\frac{\xi(-z)}{\xi(\Za)}\right)-\log\left(1-\frac{\xi(z)}{\xi(\Za)}\right)\right].
\end{align}
Formalising \eqref{heur1} is one of the main obstacles to overcome in this paper, and it is considerably more challenging than the equivalent statement in \cite{MPS}. Indeed our argument breaks down entirely in the supercritical regime, and a modification of the technique is needed; this will be discussed in more detail in Section \ref{sec:outline}.
\smallskip

Focusing on the subcritical and critical regimes, we give heuristics for the scale of \eqref{heur1}, showing how its behaviour is determined by whether $\eta(n) \ll \kappa(n)$. If we make the (unjustified, but revealing) assumption that we can perform a Taylor expansion, \eqref{heur1} becomes
\begin{equation}
\label{heur2}
\log\frac{u(t,\Za)}{u(t,-\Za)}\approx \xi(\Za)^{-1}\sum_{z\in E \cap[1,\Za]}(\xi(z)-\xi(-z)).
\end{equation}
In the case $\alpha>2$, we can apply a central limit theorem (again this is unjustified, but revealing): as there are approximately $\eta(\Za)$ sites in $E \cap[1,\Za]$, and by the symmetry of the model, the fluctuations in the sums in \eqref{heur2} are of order $\eta(\Za)^{1/2}$. Since we prove in Proposition~\ref{LB1} that 
\[ \xi(\Za)\approx (t/\log t)^{1/(\alpha-1)}\quad\mbox{and}\quad\Za\approx(t/\log t)^{\alpha/(\alpha-1)}, \]
the right-hand side of \eqref{heur2} is of order
\[  {\Za}^{-1/\alpha}  \eta(\Za)^{1/2} . \]
Notice that this expression is $o(1)$ if and only if $\eta(n)\ll\kappa(n)$, and hence we conclude that \eqref{heur2} is $o(1)$ in the subcritical regime, and of finite order in the critical regime.   \smallskip

In the case $\alpha=2$, the fluctuations in the sums in \eqref{heur2} are instead of order $\eta(\Za)^{1/2}(\log \eta(\Za))^{1/2}$, and so the right-hand side of \eqref{heur2} is of order
\[ {\Za}^{-1/2}  \eta(\Za)^{1/2} (\log \eta(\Za))^{1/2} .\]
Again this expression is $o(1)$ if and only if $\eta(n)\ll\kappa(n)$, and so we reach the same conclusion.


\bigskip
\section{Outline of the proof}
\label{sec:outline}

In this section we give an outline of the proof of our main results. Along the way we introduce four key intermediate statements, and complete the proofs of Theorems \ref{t:main0}--\ref{t:main3} assuming these statements.

\subsection{Outline of the key steps}

The main steps in the proof are similar to those undertaken in \cite{MPS}, but with certain additional complications. In particular, our technique in Step $2$ below is significantly more involved than its equivalent in \cite{MPS}; this is explained in more detail below.
\smallskip

\textbf{Step $1$: Trimming the path set.}
As already remarked, the Feynman-Kac formula allows us to decompose the total solution $U(t)$ based on contributions from geometric paths from the origin. The first step is to eliminate paths that \textit{a priori} make a negligible contribution to the solution, either because they do not end at $\{-\Za, \Za\}$ or because they make too many jumps. This step is rather similar to in \cite{MPS}, and here we have streamlined the approach.
\smallskip

To define the \textit{a priori} negligible paths, we first introduce the scales 
\begin{align*}
r_t=\Big(\frac{t}{\log t}\Big)^{\frac{\alpha}{\alpha-1}}
\qquad\text{and}\qquad
a_t=\Big(\frac{t}{\log t}\Big)^{\frac{1}{\alpha-1}}
\end{align*}
which, as formalised in Proposition \ref{LB1}, give the asymptotic order of $\Za$ and $\xi(\Za)$ respectively. For technical reasons, we also introduce some auxiliary positive monotone scaling functions $f_t \to 0$ and $g_t \to \infty$ which can be thought of as being arbitrarily slowly decaying or growing. We shall need these scales to satisfy
\begin{align}
\label{fg1}
g_t,1/f_t \ll \log\log t
\end{align} 
as well as
\begin{subnumcases} {\log g_t, \log(1/f_t)\ll }
\log \eta(r_t) &  \label{fg2a} \\
\log \frac{r_t}{\eta(r_t)} & \label{fg2b} \\
\Big|\log\frac{\eta(r_t)}{\kappa(r_t)}\Big|, & \label{fg2c}
\end{subnumcases}
unless the functions on the right-hand side do not tend to infinity (which happens to $\log\eta(r_t)$ in the regime when $\eta(n)$ converges and to $\big|\log (\eta(r_t) / \kappa(r_t)) \big|$ in the critical regime; observe that $\log (r_t /\eta(r_t))$ always tends to infinity due to $q(n)\to 0$). These requirements guarantee that any power of $g_t$ and~$f_t$ will be slower growing or decaying than any powers of the functions under the logarithms on the right-hand side. Observe that \eqref{fg2a} and \eqref{fg2b} allows us to separate $\eta$ from its lower and upper limits (the case of bounded $\eta$ we treat somewhat separately), whereas \eqref{fg2c} allows us, in the non-critical regimes, to separate $\eta$ from the critical scale.
\smallskip

Define $R_t= \Za(1+f_t)$ and let $J_t$ be the number of jumps of $(X_s)$ by time $t$. We decompose the total mass $U(t)$ into a significant component
\[U_0(t) = \E \Big[\exp\Big\{\int_0^t \xi(X_s)ds\Big\} \one\{J_t \le R_t, X_t \in \{-\Za, \Za\}   \} \Big] \]
and a negligible component $U_1(t) = U(t) - U_0(t)$. 
\smallskip

In Section~\ref{sec:prelim} we prove that $U_1$ is negligible with respect to~$U$ as long as certain typical properties of~$\xi$ hold. To define these properties, let $\Zb$ be a maximiser of $\Psi_t$ on the set $D \setminus \{\Za\}$ and let $\Zepm$ be  a maximiser of $\Psi_t$ on the set $-E\cup E$. The proof of the existence of $\Zb$ and $\Zepm$ is standard. The typical properties are contained in the event
\begin{align*}
\mathcal{E}^1_t
&=\Big\{f_t<\frac{\Za}{r_t}<g_t, \, f_t<\frac{\xi(\Za)}{a_t}<g_t, \, \frac{\Psi_t(\Za)-\Psi_t(\Zb)}{a_t}>f_t , \Psi_t(\Zepm)<\Psi_t(\Zb), \\
&\phantom{aaaaaa}
\frac{\xi(\Za)-\xi(z)}{a_t} > f_t \ \forall \, |z| \in [0, R_t] \setminus \Za, \, \xi(z) < \frac{|z|}{t} \log \frac{|z|}{2 e t} \  \forall \, |z| > r_t g_t  \Big\},
\end{align*}
which in particular guarantees a large gap between the value of $\Psi_t$ at sites in $\{-\Za, \Za\}$ and all other sites. Our main conclusion in this step is summarised by the following proposition.

\begin{prop} 
\label{u12}
Almost surely, 
\begin{align*}
\frac{U_1(t)}{U(t)}\one_{\mathcal{E}^1_t} \to 0
\end{align*}
as $t\to\infty$.
\end{prop}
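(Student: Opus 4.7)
The plan is to show $U_1(t) = o(U(t))$ on $\mathcal{E}^1_t$ by decomposing
\[ U_1(t) = U_1^{\mathrm{near}}(t) + U_1^{\mathrm{far}}(t) \]
according to $J_t \le R_t$ (with $X_t \notin \{-\Za, \Za\}$) versus $J_t > R_t$, and bounding each piece against the denominator $U(t)$, which we lower-bound by $u(t, \Za)$. For the denominator, we restrict the Feynman--Kac expectation to trajectories that make exactly $\Za$ right-jumps within a short initial interval of length $\tau_t \approx \Za/\xi(\Za)$ and then remain at $\Za$ until time $t$; this single-path strategy, together with Stirling's formula, yields the standard bound
\[ u(t, \Za) \ge \exp\{t\Psi_t(\Za) - O(\Za)\}\]
on $\mathcal{E}^1_t$, where the $O(\Za)$ correction is absorbed using condition \eqref{fg1}.

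For $U_1^{\mathrm{near}}(t)$, we derive the matching pointwise upper bound $u(t, z) \le \exp\{t\Psi_t(z) + O(\Za)\}$ for every $z$, proved by optimizing the Feynman--Kac expectation over the number of jumps and applying Stirling. The event $\mathcal{E}^1_t$ guarantees the gap $\Psi_t(\Za) - \Psi_t(z) > a_t f_t$ for every $z \in [-R_t, R_t] \setminus \{\pm \Za\}$: either $z \in D$ and $\Psi_t(z) \le \Psi_t(\Zb)$, or $z \in -E \cup E$ and $\Psi_t(z) \le \Psi_t(\Zepm) \le \Psi_t(\Zb)$. Summing the pointwise bound over at most $2R_t$ such sites therefore yields
\[ \frac{U_1^{\mathrm{near}}(t)}{U(t)} \le 2R_t \exp\{-t a_t f_t (1 + o(1))\}, \]
which vanishes since $f_t \log \log t \to \infty$ by \eqref{fg1}.

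For $U_1^{\mathrm{far}}(t)$, we combine the Poisson tail bound $\P(J_t > N) \le \exp\{-N \log(N/2et)\}$ with bounds on $\max_{|y| \le N}\xi(y)$. For $N$ in a dyadic window close to $R_t$, we use the extremal identity $\max_{|y| \le R_t}\xi(y) = \xi(\Za)$ enforced by $\mathcal{E}^1_t$; for $N$ much larger we invoke the almost-sure Borel--Cantelli estimate $\max_{|y|\le N}\xi(y) \le N^{1/\alpha}(\log N)^2$ derived from the Pareto tail. The key quantitative gain comes from the slack factor $(1+f_t)$ in $R_t = \Za(1+f_t)$, which produces a strict improvement of $\exp\{-\Omega(\Za f_t \log a_t)\}$ over $u(t, \Za)$; summing the resulting estimate over dyadic blocks $[2^k R_t, 2^{k+1} R_t]$ then gives $U_1^{\mathrm{far}}(t)/U(t) \to 0$ almost surely.

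The main obstacle will be the treatment of $U_1^{\mathrm{far}}(t)$: the naive bound combining the generic extreme-value estimate $\max_{|y|\le N}\xi(y) \approx N^{1/\alpha}$ with the Poisson tail is delicately balanced against $u(t,\Za)$ at $N \approx \Za$ and does not yield strict dominance when $\alpha \ge 2$. The resolution exploits simultaneously the $(1+f_t)$-slack in the definition of $R_t$ and the extremal property $\xi(\Za) = \max_{|y| \le R_t}\xi(y)$ inherited from $\mathcal{E}^1_t$, which together break the balance by the margin needed to force almost-sure decay of the ratio.
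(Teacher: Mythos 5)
Your overall decomposition, the single-path lower bound $u(t,\Za)\ge\exp\{t\Psi_t(\Za)-O(\Za)\}$, the treatment of the many-jump part via the Poisson tail and the slack in $R_t=\Za(1+f_t)$, and your use of $\Zb$ and $\Zepm$ to get the gap $\Psi_t(\Za)-\Psi_t(z)>a_tf_t$ for all $z\notin\{-\Za,\Za\}$ are all in line with the paper's first step (which it delegates to [MPS, Prop.\ 3.7]). The problem is the near part. The claimed pointwise bound $u(t,z)\le\exp\{t\Psi_t(z)+O(\Za)\}$ is false: the mass ending at $z$ is governed by the largest potential value the path can sit at, not by the potential at the endpoint. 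For instance, for $z=\Za+1$ the trajectory that runs straight to $\Za$, sits there until just before time $t$ and then makes a single final jump contributes roughly $\exp\{t\Psi_t(\Za)+O(\Za)\}\cdot(\xi(\Za)-\xi(z))^{-1}$ to $u(t,z)$, which exceeds $\exp\{t\Psi_t(z)+O(\Za)\}$ by a factor of order $\exp\{(1+o(1))\,t(\xi(\Za)-\xi(z))\}$. For the same reason your conclusion $U_1^{\mathrm{near}}(t)/U(t)\le 2R_t\exp\{-ta_tf_t(1+o(1))\}$ cannot be correct: on $\mathcal{E}^1_t$ the ratio of the contribution of paths ending at $\Za\pm1$ to $u(t,\Za)$ is of order $(\xi(\Za)-\xi(\Za\pm1))^{-1}$, hence at best polynomially small in $a_tf_t$, not stretched-exponentially small.

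An endpoint (or path-maximum) bound in terms of $\Psi_t$ is legitimate only for paths that never visit $\{-\Za,\Za\}$; the delicate contribution is precisely from paths that reach $\{-\Za,\Za\}$ within $R_t$ jumps but terminate elsewhere, and your decomposition never isolates these. This is exactly the second step in the paper's proof: following [MPS, Lem.\ 4.4], each such path is compared with its truncation at the last visit to $\{-\Za,\Za\}$, each extra step beyond the truncation costing a factor at most $(\xi(\Za)-\xi(y_i))^{-1}\le(a_tf_t)^{-1}$ on $\mathcal{E}^1_t$, and the resulting combinatorial series (over the number and length of the excursions after the last visit) is summed and shown to be $o(1)$. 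Without this path-comparison argument the near part of $U_1(t)$ is not controlled, so as written the proposal has a genuine gap, even though its far-jump part and its use of the $\Psi_t$-gap could be salvaged as the complementary half of the proof.
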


\textbf{Step $2$: Representing the ratio $u(t, \Za)/u(t, -\Za)$ as a sum over non-duplicated sites.}
The next step is to formalise \eqref{heur1} representing the ratio $u(t, \Za)/u(t, -\Za)$ as a sum over the non-duplicated sites between $-\Za$ and~$\Za$. As mentioned in Section \ref{sec:heur}, this equation is only valid in the subcritical and critical regimes; in the supercritical regime we show instead that the ratio $u(t, \Za)/u(t, -\Za)$ can be decomposed into a sum over a \emph{subset} of the non-duplicated sites, and another term that we are able to control. Our choice of this subset will turn out to be extremely delicate.
\smallskip

To make this precise we introduce some more notation. Define a threshold function
\begin{align*}
\theta_t=\begin{cases}
1 & \text{ if }\eta(n)\ll \kappa(n)
\text{ or }\eta(n)\sim\beta \kappa(n),\\
f_t\Big[\frac{\eta(r_t)}{r_t^{2/\alpha}}\Big]^{\frac{1}{\alpha-2}} & 
\text{ if }\eta(n)\gg \kappa(n)\text{ and }\alpha>2,\\
a_t\exp\left(-\frac{r_t}{\eta(r_t)f_t}\right) & \text{ if }\eta(n)\gg \kappa(n)\text{ and }\alpha=2,
\end{cases}
\end{align*}
and note that $1 \ll \theta_t \ll a_t$ in the supercritical regime by \eqref{fg2c} and since $r_t = a_t^\alpha$. Let $\mathcal{K}_t$ be the subset of non-duplicated sites with potential exceeding the threshold
\begin{align*}
\mathcal{K}_t=\big\{z\in \Z: |z|\in E\cap [1,\Za]\text{ and }\xi(z)>\theta_t\big\}.
\end{align*}
Observe that in the subcritical and critical regimes $\mathcal{K}_t$ contains \emph{all} non-duplicated sites between $-\Za$ and $\Za$, whereas in the supercritical regime $\mathcal{K}_t$ consists only of the non-duplicated sites between $-\Za$ and $\Za$ with high values of $\xi$. Denote by $\mathcal{K}_t^+$ and $\mathcal{K}_t^{-}$ the subsets of $\mathcal{K}_t$ consisting of the points lying between $0$ and $\Za$, and $0$ and $-\Za$, respectively.
\smallskip

In order to state our formalisation of \eqref{heur1}, we shall also need to guarantee certain typical properties of the subset $\mathcal{K}_t$, as well as controlling potential values near $\Za$ and $-\Za$. These are contained in the event
\begin{align*}
\mathcal{E}^2_t
&=\Big\{ f_t <\frac{\theta_t^{\alpha}|\mathcal{K}^+_t|}{\eta(r_t)}<g_t, \,  f_t <\frac{\theta_t^{\alpha}|\mathcal{K}^-_t|}{\eta(r_t)}<g_t ,  \\
& \phantom{aaaaaaa}  \big[\Za-\alpha,\Za+\alpha\big]\cap \N\subset D, \, 2\xi(z)<\xi(\Za) \text{ for all }\, 0<|z-\Za|\le \alpha \Big\}.
\end{align*}
Observe that, if $\eta(n) \to \infty$, then on the event $\mathcal{E}_t^2$,
\begin{align}
\label{fgk}
 \log g_t, \log(1/f_t) \ll \log |\mathcal{K}^+_t| ,
 \end{align}
by~\eqref{fg2a} in the subcritical and critical regimes, ~\eqref{fg2b} in the supercritical regime. To ease notation, we combine the typical properties introduced thus far into the event 
\[ \mathcal{E}_t =  \mathcal{E}^1_t \cap \mathcal{E}^2_t . \]

We are now ready to formalise \eqref{heur1}. For each $t>0$, let $\mathcal{F}_t$ be the $\sigma$-algebra generated by $D$, $\Za$, $\mathcal{K}_t$ and $\{\xi(z):z\notin \mathcal{K}_t\}$. For each $z\in\mathcal{K}_t$, let
\begin{align*}
Q_t(z)=-\log\Big(1-\frac{\xi(z)}{\xi(\Za)}\Big)\one\{\xi(z)<\xi(\Za)\}\one\{\mathcal{E}_t\},
\end{align*}
and denote 
\begin{align*}
Q_t^+=\sum_{z\in\mathcal{K}_t^+}Q_t(z)
\ , \quad 
Q_t^-= \sum_{z\in\mathcal{K}_t^-}Q_t(z)
\qquad \text{and} \qquad Q_t=Q_t^+ - Q_t^-.
\end{align*}

\begin{prop} 
\label{p:pq}
There is an $\mathcal{F}_t$-measurable random variable $P_t$ such that
\begin{align}
\label{e:pq}
\log\frac{u(t,\Za)}{u(t,-\Za)} = Q_t + P_t + o(1) ,
\end{align}
where the $o(1)$ term tends to zero almost surely on $\mathcal{E}_t$ in the non-critical regimes, and in probability in the critical regime. In the subcritical and critical regimes $P_t=0$. 
\end{prop}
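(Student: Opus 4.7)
My plan is to start from the Feynman--Kac representation of $u(t,\pm\Za)$ together with the reduction provided by Proposition~\ref{u12}, which allows me to replace $u(t,\pm\Za)$ by the restricted masses $u_0(t,\pm\Za)$ (the contributions from paths with at most $R_t$ jumps ending at $\pm\Za$). Each such geometric path contributes a product of jump intensities times a simplex integral of the form $\int_{\Delta_k(t)}\exp(\sum_j\xi(z_j)\tau_j)\,d\tau$. Because the event $\mathcal{E}_t^1$ enforces a large gap between $\xi(\Za)$ and every other $\xi(z)$ with $|z|\le R_t$ and $|z|\ne\Za$, the Sylvester-type evaluation of this integral is dominated by its $e^{t\xi(\Za)}$ summand, so each path is effectively weighted by an inverse product of potential gaps along the sites it visits.

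The next step is to sum over these paths and establish, on $\mathcal{E}_t$, an approximation of the form
\[
u_0(t,\pm\Za)=A_t\prod_{z=1}^{\Za-1}\frac{1}{\xi(\Za)-\xi(\pm z)}\,\bigl(1+o(1)\bigr),
\]
where $A_t$ is a common factor depending only on $\xi(\Za)$, $\Za$, $t$ and the (duplicated, by $\mathcal{E}_t^2$) potential values immediately adjacent to $\pm\Za$. The formal derivation I have in mind is a discrete-WKB / transfer-matrix argument for the principal eigenfunction of $\Delta+\xi$ on the half-islands $[1,R_t]$ and $[-R_t,-1]$: iterating the eigenvalue recursion and using $\xi(\Za)-\xi(z)>f_t a_t$ from $\mathcal{E}_t^1$ yields the product form, while path segments that cross through $0$ or exit the islands contribute a factor symmetric under $\Za\leftrightarrow-\Za$ that is absorbed into $A_t$. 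Taking the ratio and logarithm then makes $A_t$ cancel and, since duplicated sites contribute the factor $1$, reduces the problem to
\[
\log\frac{u(t,\Za)}{u(t,-\Za)}=\sum_{w\in E\cap[1,\Za-1]}\bigl[\log(1-\xi(-w)/\xi(\Za))-\log(1-\xi(w)/\xi(\Za))\bigr]+o(1).
\]
In the sub/critical regimes $\theta_t=1$ and $\mathcal{K}_t$ contains every $z\in\Z$ with $|z|\in E\cap[1,\Za]$, so the right-hand side is exactly $Q_t^+-Q_t^-=Q_t$ and I take $P_t=0$. In the supercritical regime I split each summand according to whether $\xi(w)>\theta_t$ and whether $\xi(-w)>\theta_t$: the contributions with $w\in\mathcal{K}_t^+$ or $-w\in\mathcal{K}_t^-$ reassemble into $Q_t$, while the remaining terms involve only potential values at sites outside $\mathcal{K}_t$, hence are $\mathcal{F}_t$-measurable and form $P_t$.

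The hard part will be the approximation in Paragraph~2, because the multiplicative $1+o(1)$ error must translate into additive $o(1)$ after the logarithm, which is a rather tight requirement. Three difficulties stand out. First, loops (excursions from the direct paths $0\to\pm\Za$) and path segments straying across $0$ contribute corrections that must either be shown negligible or be matched on the two sides; the bound $J_t\le R_t=\Za(1+f_t)$ together with the typical behaviour near $\pm\Za$ encoded in $\mathcal{E}_t^2$ is what makes this control feasible. Second, in the supercritical regime the product $\prod(\xi(\Za)-\xi(\pm z))^{-1}$ is extremely small, so even tiny relative errors could destroy the signal; the delicate choice of $\theta_t$ together with the slow-growth conditions \eqref{fg1}--\eqref{fg2c} on $f_t$ and $g_t$ is tuned precisely so that $Q_t$ captures exactly those sites where the naive approximation fails, while $P_t$ remains computable from potential values outside $\mathcal{K}_t$. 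Third, in the critical regime an $o(1)$ error is required in an absolute (not relative) sense in order to feed into the limit law of Theorem~\ref{t:main3}; I expect to achieve this by exploiting the finer moment control of $M_t^\pm$ and $\Sigma_t^\pm$ supplied by the event $\mathcal{E}_t^{cr}$.
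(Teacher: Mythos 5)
Your plan has the right outer shell (Feynman--Kac, reduction to paths ending at $\pm\Za$ with at most $R_t$ jumps, gap-product weights, a threshold split in the supercritical regime, awareness that $\mathcal{E}_t^{cr}$ must enter in the critical regime), but the central analytic step is asserted rather than supplied, and it is exactly the step that fails at the required precision. You claim that on $\mathcal{E}_t^1$ each path's simplex integral is ``dominated by its $e^{t\xi(\Za)}$ summand'', so that each path is weighted by $\prod(\xi(\Za)-\xi(z))^{-1}$ up to a uniform multiplicative $1+o(1)$, with loops and zero-crossings contributing a factor ``symmetric under $\Za\leftrightarrow-\Za$'' absorbed into $A_t$. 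Neither claim survives scrutiny. Paths making up to $\Lambda_t\asymp t/\xi(\Za)$ returns to $\Za$ must be retained, and for such paths the time left over for the (unboundedly many) sites of $\mathcal{K}_t^\pm$ is not concentrated near $t$; the exact evaluation of the simplex integral then carries a factor equal to the probability that $\sum_{z\in\mathcal{K}_t^\pm}\tau_z/(\xi(\Za)-\xi(z))$ (with $\tau_z$ i.i.d.\ exponentials) fits into the remaining time. In the critical regime $M_t^\pm\asymp a_t$ and $\Sigma_t^\pm\asymp 1$, so this factor is neither uniformly $1+o(1)$ nor $\mathcal{F}_t$-measurable: the paper has to keep it, prove a CLT for it (Lemma~\ref{l:normal}), and replace $M_t^\pm,\Sigma_t^\pm$ by the $\mathcal{F}_t$-measurable surrogates $\bar M_t,\bar S_t$ with errors $o(1/a_t)$ and $o(1/a_t^2)$ supplied by $\mathcal{E}_t^{cr}$; this is the content of Proposition~\ref{p:ext} and is missing from your sketch (saying you will ``exploit the finer moment control'' of $\mathcal{E}_t^{cr}$ does not identify what it is controlling). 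Likewise, excursions from the direct path to $+\Za$ visit the exclusive values $\xi(w)$ while those on the other side visit $\xi(-w)$, so loop contributions are \emph{not} symmetric and cannot be absorbed into a common $A_t$; the paper instead shows that more than $w_t$ extra visits to $\mathcal{K}_t$ are negligible (Lemma~\ref{L:nullpaths}) and that the retained extra-visit factors are $o(1)$ via the bound on $\zeta_t$ (Lemma~\ref{l:zeta}).

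The supercritical route is a second genuine gap. You propose to first establish the full-sum identity over all of $E\cap[1,\Za]$ and only then split each summand at $\theta_t$, putting the sub-threshold terms into $P_t$. But the path control needed for any such representation requires, per extra visit to an isolated site, a gain of order $|\mathcal{K}_t|/(a_tf_t)^2$, which must tend to zero; with the isolation set equal to all of $E\cap[1,\Za]$ this is $\asymp\eta(r_t)/(a_tf_t)^2$, and in the supercritical regime $\eta(r_t)\gg r_t^{2/\alpha}=a_t^2$, so the combinatorial sum over extra-visit configurations diverges — this is precisely why the paper thresholds \emph{before} deriving any representation, choosing $\theta_t$ so that $|\mathcal{K}_t|\ll(a_tf_t)^2$. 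Consequently the sub-threshold non-duplicated sites never appear as explicit log-gap terms; they are wrapped, together with the path combinatorics and the times at $\Za$, into the $\mathcal{F}_t$-measurable weights $\Xi_t(y)$, and $P_t=-\log\sum_{y\in\mathcal{P}^t_+}\Xi_t(y)+\log\sum_{y\in\mathcal{P}^t_-}\Xi_t(y)$. The intermediate identity your supercritical argument rests on is exactly what this method cannot deliver there, so your construction of $P_t$ inherits that gap.
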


Proving Proposition \ref{p:pq} is the cornerstone of the paper, and is undertaken in Sections \ref{sec:paths} and~\ref{sec:iso}. The analysis is similar to in \cite{MPS}, but considerably more involved for reasons we explain now. \smallskip

In \cite{MPS}, our overall approach was to isolate a small number of sites and show that: (i) only paths that visit these sites once make a non-negligible contribution to the solution; (ii) hence, as in \eqref{e:pq}, we may represent the ratio $u(t,-\Za)/u(t,\Za)$ as a sum over these sites; and (iii) the contribution to the ratio $u(t,-\Za)/u(t,\Za)$ from just these sites already has sufficient fluctuations to determine the behaviour of the model. There was a clear balance in choosing these sites: we needed enough for step (iii) to be available, but few enough that steps (i) and (ii) were still possible. In that paper it turned out to be sufficient in step (iii) to take an arbitrarily slowly-growing number of sites, so the equivalent statement to~\eqref{e:pq} was relatively easy to prove.
\smallskip

In the present paper we use a similar technique, but the balance is much more delicate. In particular, it is no longer sufficient to take an arbitrarily slowly-growing number of sites. In the subcritical and critical regimes we find that it is possible, and sufficient, to take all the sites in $E \cap[1,\Za]$ to make (a variant of) this argument work. In the supercritical regime, by contrast, it is not possible to capture enough of the solution from paths visiting these sites a small number of times. Instead we identify further subsets $\mathcal{K}_t^+\subset E\cap[1,\Za]$ and $\mathcal{K}_t^-\subset (-E)\cap[-\Za,1]$ such that the fluctuations of
\begin{equation}
\label{heur3}
 \xi(\Za)^{-1} \left[\sum_{z\in \mathcal{K}_t^+}\xi(z)-\sum_{z\in\mathcal{K}_t^-}\xi(z)\right].
\end{equation}
already tend to infinity; naturally, we choose these as top order statistics in order to maximise the scale of the fluctuations. It turns out that our choice of $\mathcal{K}_t^-$ and $\mathcal{K}_t^+$ are \emph{just} sufficient to guarantee enough fluctuations in~\eqref{heur3}; we explain this further when we examine Step $4$ of the proof below.\smallskip

\smallskip

\textbf{Step $3$: The typical properties.}
The next task is to establish that the typical properties contained in $\mathcal{E}_t$ hold eventually with overwhelming probability. In particular we establish the following.

\begin{prop}
\label{p:e}
$\Prob(\mathcal{E}_t)\to 1$ as $t\to\infty$.
\end{prop}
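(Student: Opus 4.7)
The plan is to establish the three typical events separately, using extreme value theory for independent Pareto variables together with variance-based concentration inequalities.

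For $\mathcal{E}^1_t$, the orders $\Za \asymp r_t$ and $\xi(\Za) \asymp a_t$ are supplied by Proposition \ref{LB1}. The spacing conditions $(\Psi_t(\Za) - \Psi_t(\Zb))/a_t > f_t$ and $(\xi(\Za) - \xi(z))/a_t > f_t$ follow from the standard fact that successive top order statistics of i.i.d.\ Pareto$(\alpha)$ variables on $[1, r_t]$ are separated on the scale $a_t$: multiplying by the arbitrarily slowly decaying $f_t$ gives the bound with overwhelming probability, and a union bound over the $O(r_t)$ sites in $[0, R_t]$ upgrades this to the global gap statement. For $\Psi_t(\Zepm) < \Psi_t(\Zb)$ I shall use that $|E \cap [1, r_t]|$ concentrates around $\eta(r_t) \ll r_t$ (since $q(n) \to 0$), so that the maximum of $\xi$ over $E \cap [1, r_t]$ is of order $\eta(r_t)^{1/\alpha} \ll a_t$, yielding $\Psi_t(\Zepm) \ll a_t \asymp \Psi_t(\Zb)$.

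For $\mathcal{E}^2_t$, the size of $\mathcal{K}_t$ is controlled by Chebyshev. Conditionally on $\Za$, the indicators $\one\{|z| \in E, \, \xi(z) > \theta_t\}$ for $z \in [1, \Za]$ are independent Bernoullis with probability $q(z) \theta_t^{-\alpha}$, so the expected size is asymptotic to $\theta_t^{-\alpha} \eta(r_t)$; the explicit definition of $\theta_t$ together with \eqref{fg2a}--\eqref{fg2c} guarantees that this expectation grows faster than any power of $1/f_t$, so Chebyshev delivers the concentration window $(f_t, g_t)$. In the subcritical and critical regimes $\theta_t \equiv 1$ and the argument reduces to concentration of $N(\Za)$ around $\eta(r_t)$. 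The inclusion $[\Za - \alpha, \Za + \alpha]\cap \N \subset D$ holds with probability $(p(\Za))^{O(1)} \to 1$ since $p(n) \to 1$ and $\Za \to \infty$, and the gap $2\xi(z) < \xi(\Za)$ for $|z - \Za| \le \alpha$ is already a consequence of the $\xi$-gap bound built into $\mathcal{E}^1_t$.

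For $\mathcal{E}^{cr}_t$ in the critical regime, the plan is considerably more delicate. Here $\mathcal{K}_t^\pm$ coincides with $\pm(E \cap [1, \Za])$ and, conditionally on $\Za$, $\xi(\Za)$ and the membership of $D$, the summands $(\xi(\Za) - \xi(z))^{-1}$ are independent. A Taylor expansion of the conditional expectation against the Pareto density yields
\[ \mathrm{E}\bigl[(\xi(\Za) - \xi(z))^{-1} \,\bigm|\, \xi(z) < \xi(\Za)\bigr] = \xi(\Za)^{-1}\bigl(1 + \gamma/\xi(\Za) + O(\xi(\Za)^{-2})\bigr), \]
which reproduces $\bar M_t$ upon summing over $N(\Za)$ sites; an analogous second-moment computation produces $\bar{S}_t$ and introduces the logarithmic factor $\lambda(r_t)$ in the borderline case $\alpha = 2$. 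Concentration then follows from a Chebyshev argument on these conditionally independent summands.

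The main obstacle is the sharpness of scale demanded by $\mathcal{E}^{cr}_t$: the requirements $a_t |M_t^\pm - \bar M_t| < g_t$ and $a_t^2 |1/\Sigma_t^\pm - 1/\bar S_t| < g_t$ are tighter than what a naive Chebyshev bound delivers. To cope with this I anticipate splitting the sum over $z \in \mathcal{K}_t^\pm$ into a bulk, in which variance estimates on $(\xi(\Za) - \xi(z))^{-1}$ suffice, and a tail consisting of the finitely many $z$ for which $\xi(z)$ approaches $\xi(\Za)$, handled separately using the explicit $\xi$-gap bounds furnished by $\mathcal{E}^1_t$. A careful control of the $\gamma/\xi(\Za)$ correction, using the already established order $\xi(\Za) \asymp a_t$, will then yield the required sharpness.
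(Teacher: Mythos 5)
Your plan for $\mathcal{E}^{cr}_t$ contains a genuine gap, and it is the heart of the proposition. The quantity you propose to Taylor-expand does not exist: for a Pareto$(\alpha)$ variable $\xi(z)$ and $a=\xi(\Za)$, the integral $\int_1^a (a-x)^{-1}\,\alpha x^{-\alpha-1}dx$ diverges logarithmically at $x=a$, so $\mathrm{E}\bigl[(\xi(\Za)-\xi(z))^{-1}\mid \xi(z)<\xi(\Za)\bigr]=+\infty$ (and conditionally on $\mathcal{F}_t$ the values $\xi(z)$, $z\in\mathcal{K}_t$, are genuinely unrestricted Pareto variables, so no upper truncation is available). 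Hence the asserted expansion producing $\bar M_t$ is false, and a Chebyshev argument on the summands $(\xi(\Za)-\xi(z))^{-1}$ cannot even start, since their conditional mean and variance are infinite. Your fallback -- splitting off a ``tail'' of sites with $\xi(z)$ close to $\xi(\Za)$ and controlling it with the gap $\xi(\Za)-\xi(z)>a_tf_t$ from $\mathcal{E}^1_t$ -- is not sharp enough either: a single site with $\xi(z)\approx(1-f_t)\xi(\Za)$ would already shift $M_t^+$ by about $(a_tf_t)^{-1}$, i.e.\ by $1/f_t$ after the required multiplication by $a_t$, and nothing in the hypotheses relates $1/f_t$ to $g_t$. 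What the paper actually uses is the much stronger Lemma~\ref{l:zeta}: in the critical regime the largest \emph{exclusive} potential value below $\Za$, $\zeta_t$, is only of order $a_t^{2/\alpha}\lambda(t)^{-1/2}\ll a_t$. This is proved via the inhomogeneous Poisson limit $\hat\Pi$ (resp.\ $\tilde\Pi^{\ssup e}_s$ for $\alpha=2$) of the non-duplicated field under the non-standard scaling $s^{2/\alpha^2}$ (resp.\ $(s/\log s)^{1/2}$), which in turn rests on Lemma~\ref{1101} extracting the asymptotics of $q(n)$ from $\eta(n)\sim\beta\kappa(n)$. On the event $\zeta_t/\xi(\Za)\to 0$ one sandwiches each summand deterministically via $1+x<(1-x)^{-1}<1+x+2x^2$, and the randomness is then carried entirely by the plain Pareto sums $\sum\xi(z)$ and $\sum\xi(z)^2$, to which the CLT (with the $\lambda$-correction at $\alpha=2$) and the law of large numbers apply; this is Lemma~\ref{l:cr}, and it is this mechanism, not moment bounds on the reciprocals, that delivers the precision $a_t|M_t^\pm-\bar M_t|<g_t$ and $a_t^2|1/\Sigma_t^\pm-1/\bar S_t|<g_t$.

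Two smaller but real errors elsewhere. First, the neighbourhood condition $2\xi(z)<\xi(\Za)$ for $0<|z-\Za|\le\alpha$ does \emph{not} follow from the $\mathcal{E}^1_t$ gap: that gap only gives $\xi(z)<\xi(\Za)-a_tf_t$ with $f_t\to 0$, which is compatible with $\xi(z)=0.9\,\xi(\Za)$; the paper proves it separately (Lemma~\ref{l:nohigh}) by noting that, given $Z_t^{\ssup{1*}}$, each of the finitely many neighbours is a conditioned Pareto variable, so $\Prob(2\xi(z)\ge\xi(Z_t^{\ssup{1*}}))\to 0$ since $\xi(Z_t^{\ssup{1*}})\asymp a_t\to\infty$. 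Second, your argument for $\Psi_t(\Zepm)<\Psi_t(\Zb)$ bounds only the exclusive maximum over $[1,r_t]$, but $\Zepm$ is a maximiser of $\Psi_t$ over all of $\pm E$ and may lie far beyond $r_t$; one needs the distance penalty in $\Psi_t$ to control those sites, which the paper handles through the convergence $\Pi^{\ssup e}_{r_t}\Rightarrow\emptypp$ on the state space $G$ whose topology makes sets bounded away from the line $y=\rho x$ relatively compact (see the proof of \eqref{oo7} in Proposition~\ref{LB1}). The remaining parts of your outline for $\mathcal{E}^1_t$ and the size of $\mathcal{K}_t$ are broadly in the spirit of the paper (Proposition~\ref{LB1}, Lemmas~\ref{l:os}, \ref{l:gap}, \ref{l:nn}--\ref{l:kk}), though you would still need the comparison $\eta(\Za)\asymp\eta(r_t)$ (Lemma~\ref{l:etaeta}, using the eventual monotonicity of $q$) to replace $\eta(\Za)$ by $\eta(r_t)$ in the $\mathcal{E}^2_t$ bound.
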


The proof of Proposition \ref{p:e} is given in Section \ref{sec:typ}, using a combination of point process techniques developed in Section \ref{sec:pp} and more direct methods; the analysis is similar to in \cite{MPS}, so we do not describe it in further details here. One immediate consequence is that, in combination with Proposition~\ref{u12}, the proof of Theorem~\ref{t:main0} is  complete.
\smallskip

\textbf{Step $4$: Fluctuation theory.}
The final step is to use standard theory to study the scale of the fluctuations of $Q_t$. To state the main result in this step, recall the $\sigma$-algebra $\mathcal{F}_t$, and let $\text{Prob}_{\mathcal{F}_t}$, ${\mathrm E}_{\mathcal{F}_t}$ and $\text{Var}_{\mathcal{F}_t}$ denote, respectively, the conditional probability, expectation and variance with respect to~$\mathcal{F}_t$. Define centred, rescaled versions of $Q_t(z)$ and $Q_t$,
\begin{align}
\label{defv}
V_{t}(z)=\frac{Q_t(z)-\mathrm{E}_{\mathcal{F}_t}Q_t(z)}
{\sqrt{\text{\rm Var}_{\mathcal{F}_t}Q_t}} \ , \qquad
V_t=\sum_{z\in\mathcal{K}_t^+}V_t(z)-\sum_{z\in\mathcal{K}_t^-}V_t(z) =  \frac{Q_t-\mathrm{E}_{\mathcal{F}_t}Q_t}
{\sqrt{\text{\rm Var}_{\mathcal{F}_t}Q_t}},
\end{align}
and denote
\begin{align*}
F_{V_t}(x)=\mathrm{E}_{\mathcal{F}_t}\one\{V_t\le x\}
\end{align*}
as the conditional distribution function of $V_t$.  \smallskip

The fluctuation theory we apply is in essence a Lindenberg central limit theorem for triangular arrays, and the consequence we draw is summarised in the following proposition. 

\begin{prop} 
\label{p:clt}
In the non-critical regimes, at $t \to \infty$, 
\begin{align}
\label{var2}
\text{\rm Var}_{\mathcal{F}_t} Q_t
\to \left\{\begin{array}{ll}
0 & \text{ if } \eta(n)\ll \kappa(n),\\
\infty &\text{ if } \eta(n)\gg \kappa(n),
\end{array}\right.
\end{align}
in probability.
In the critical regime, as $t\to\infty$,  
\begin{align}
\label{var3}
\text{\rm Var}_{\mathcal{F}_t}Q_t \Rightarrow 2\beta \sigma^2 B^2,
\end{align}
where $\sigma$ and $B$ are as in Theorem~\ref{t:main3}.
\smallskip

In the critical and supercritical regimes, as $t \to \infty$,
\begin{align}
\label{clt}
\sup_{x \in \mathbb{R}} |F_{V_t} (x) - \Phi(x) |   \to 0 \quad \text{in probability} ,
\end{align}
where $\Phi$ denotes the distribution function of a standard normal random variable.
\end{prop}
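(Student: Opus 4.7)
The plan is to exploit the conditional independence of the potential values in $\mathcal{K}_t$ given $\mathcal{F}_t$, reducing Proposition~\ref{p:clt} to Pareto moment computations and a triangular-array central limit theorem. The key structural observation is that since $\mathcal{F}_t$ determines the index set $\mathcal{K}_t$ and all potential values outside $\mathcal{K}_t$, and since every $z\in\mathcal{K}_t$ satisfies $|z|\in E$ (so $\xi(z)$ is an exclusive Pareto variable, independent of $\xi(-z)$), the family $\{\xi(z):z\in\mathcal{K}_t\}$ is conditionally i.i.d., each variable distributed as a Pareto$(\alpha)$ conditioned on exceeding $\theta_t$. On $\mathcal{E}_t$ we additionally have $\xi(z)<\xi(\Za)$ for every such $z$, which amounts to an upper truncation at the $\mathcal{F}_t$-measurable value $\xi(\Za)$. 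Consequently $\{Q_t(z)\}_{z\in\mathcal{K}_t}$ are conditionally independent, and $\text{Var}_{\mathcal{F}_t}Q_t$ decomposes as the sum of individual variances.

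For the variance asymptotics in \eqref{var2}, I would first compute $\text{Var}_{\mathcal{F}_t}Q_t(z)$ for a single site by direct integration against the truncated Pareto density. For $\alpha>2$ this yields $\text{Var}_{\mathcal{F}_t}Q_t(z)\sim \sigma^2/\xi(\Za)^2$ to leading order (given $\theta_t\ll\xi(\Za)$), while for $\alpha=2$ the corresponding integral produces an additional factor of $\log(\xi(\Za)/\theta_t)$. Summing over $z\in\mathcal{K}_t$ and using the control of $|\mathcal{K}_t|$, $\xi(\Za)$ and $\theta_t$ provided by $\mathcal{E}_t$, the total variance is of order $\theta_t^{2-\alpha}\eta(r_t)\lambda(r_t)/a_t^2$ (times harmless slowly varying factors). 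In the subcritical regime, substituting $\theta_t=1$ and invoking \eqref{fg2a}--\eqref{fg2c} together with $\kappa(r_t)/a_t^2\asymp \lambda(r_t)^{-1}$ shows this is $o(1)$, whereas in the supercritical regime the same expression diverges at rate $1/f_t^{\alpha-2}$ for $\alpha>2$ (respectively $1/f_t$ for $\alpha=2$), obtained by substituting the explicit definitions of $\theta_t$.

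To obtain the sharp limit \eqref{var3} in the critical regime, I would refine the above by identifying the limiting random constant. A law of large numbers applied to $N(\Za)=\sum_{z\le\Za}\one\{z\in E\}$, combined with the regularity of $q$ furnished by Lemma~\ref{1101}, gives $N(\Za)/\kappa(\Za)\to\beta$ almost surely; combining this with the weak convergence $(\Za)^{1/\alpha}/\xi(\Za)\Rightarrow B$ yields
\[
\frac{N(\Za)}{\xi(\Za)^2}\cdot\lambda(r_t) \;=\; \frac{N(\Za)}{\kappa(\Za)}\cdot\Big(\frac{(\Za)^{1/\alpha}}{\xi(\Za)}\Big)^{\!2}\cdot\frac{\kappa(\Za)\lambda(r_t)}{\Za^{2/\alpha}} \;\Longrightarrow\; \beta B^2,
\]
where the final deterministic factor tends to $1$ in both subcases (exactly for $\alpha>2$ and after balancing the $\log\Za$ from $\kappa$ against $\log r_t$ from $\lambda$ for $\alpha=2$). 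Multiplying by the per-site variance constant (and the factor $2$ from summing both $\mathcal{K}_t^+$ and $\mathcal{K}_t^-$) then gives $2\beta\sigma^2B^2$; the approximations encoded in $\mathcal{E}_t^{cr}$ are used to control the lower-order corrections in this computation.

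For the conditional CLT \eqref{clt}, I would apply a Berry--Esseen bound to the triangular array $(V_t(z))_{z\in\mathcal{K}_t}$, reducing the claim to the Lyapunov-type estimate $\sum_z\mathrm{E}_{\mathcal{F}_t}|V_t(z)|^3\to0$. On $\mathcal{E}_t$ one has $|Q_t(z)|\le|\log f_t|$ (since $\xi(\Za)-\xi(z)\ge f_t a_t$ forces $1-\xi(z)/\xi(\Za)\ge f_t$), so each $V_t(z)$ is bounded by $|\log f_t|/\sqrt{\text{Var}_{\mathcal{F}_t}Q_t}$; in the supercritical regime the denominator diverges, making the Lyapunov condition immediate. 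In the critical regime the variance is only bounded, but a direct third-moment computation on the truncated Pareto gives $\mathrm{E}_{\mathcal{F}_t}|Q_t(z)|^3=O(\xi(\Za)^{-\alpha})$ for $\alpha>2$ (with an analogous log-modified bound for $\alpha=2$); summing over the $\asymp\Za^{2/\alpha}$ sites in $\mathcal{K}_t$ yields $O(\xi(\Za)^{2-\alpha})\to 0$, using that $\Za^{1/\alpha}/\xi(\Za)$ is tight. The main obstacle will be the critical regime, where both the per-site variance and the third moment depend on the random $\xi(\Za)$, forcing one to track these dependencies through the random limit while carefully handling the logarithmic corrections in the $\alpha=2$ case to match exactly the prefactor $2\beta\sigma^2$ appearing in \eqref{var3}.
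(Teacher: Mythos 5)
Your treatment of \eqref{var2} and \eqref{var3} is essentially the paper's argument: conditionally on $\mathcal{F}_t$ the values $\xi(z)$, $z\in\mathcal{K}_t$, are i.i.d.\ Pareto conditioned to exceed $\theta_t$; the per-site moments are computed by integrating against this density (the indicator in $Q_t(z)$ cutting the integral at $\xi(\Za)$); and the sum is controlled through $|\mathcal{K}_t|$, $\xi(\Za)/a_t$ and $N(\Za)$, with the critical limit $2\beta\sigma^2B^2$ obtained, as in the paper, from Lemma~\ref{l:nn} and the convergence $(\Za)^{1/\alpha}/\xi(\Za)\Rightarrow B$. Two small points there: the per-site variance is $\sim\sigma^2\theta_t^2/\xi(\Za)^2$ (the factor $\theta_t^2$ matters in the supercritical regime, although your aggregate order $\theta_t^{2-\alpha}\eta(r_t)\lambda(r_t)/a_t^2$ is consistent with it), and $N(\Za)/\eta(\Za)\to 1$ is only available in probability, which however suffices for the weak convergence in \eqref{var3}.

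The genuine gap is in your verification of \eqref{clt} in the supercritical regime. You bound $|Q_t(z)|\le|\log f_t|$ ``on $\mathcal{E}_t$'' and conclude that the Lyapunov condition is immediate because $\mathrm{Var}_{\mathcal{F}_t}Q_t\to\infty$. But the quantities to be controlled are \emph{conditional} moments $\mathrm{E}_{\mathcal{F}_t}|V_t(z)|^3$, and $\mathcal{E}_t$ is not $\mathcal{F}_t$-measurable: the conditional law of $\xi(z)$ given $\mathcal{F}_t$ charges all of $(\theta_t,\infty)$, and $Q_t(z)=-\log\big(1-\xi(z)/\xi(\Za)\big)\one\{\xi(z)<\xi(\Za)\}$ is unbounded under that law, blowing up as $\xi(z)\uparrow\xi(\Za)$. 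So a pointwise bound valid on the event $\mathcal{E}_t$ cannot be inserted inside $\mathrm{E}_{\mathcal{F}_t}$, and divergence of the variance alone does not settle the matter, since the number of summands $|\mathcal{K}_t|$ also diverges. The paper circumvents exactly this by checking the Lindeberg condition through an explicit computation: it shows $\{|V_t(z)|\ge\varepsilon\}\subseteq\{|Q_t(z)|\ge\varepsilon\sqrt{\mathrm{Var}_{\mathcal{F}_t}Q_t}\}$, rewrites the latter event as $\{(1-\nu_t^{\e})\xi(\Za)<\xi(z)<\xi(\Za)\}$ with $\nu_t^{\e}=\exp\{-\varepsilon\sqrt{\mathrm{Var}_{\mathcal{F}_t}Q_t}\}$, verifies $(1-\nu_t^{\e})\xi(\Za)>\theta_t$ eventually (separately in the critical and supercritical regimes), and then evaluates the resulting truncated Pareto integrals $J_n(\nu_t^{\e})$, showing the Lindeberg sum vanishes. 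Your Berry--Esseen route can be repaired, but only by carrying out the analogous explicit computation of $\mathrm{E}_{\mathcal{F}_t}\big|Q_t(z)-\mathrm{E}_{\mathcal{F}_t}Q_t(z)\big|^3$ from the truncated Pareto density in the supercritical regime (where $\theta_t\gg 1$), not by the boundedness shortcut. A secondary slip: even in the critical regime your bound $\mathrm{E}_{\mathcal{F}_t}|Q_t(z)|^3=O(\xi(\Za)^{-\alpha})$ holds only for $2<\alpha<3$; for $\alpha\ge 3$ the correct order is $\xi(\Za)^{-3}$ (with a logarithm at $\alpha=3$), although the conclusion $\sum_z\mathrm{E}_{\mathcal{F}_t}|V_t(z)|^3\to 0$ survives with the corrected exponent.
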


The proof of Proposition \ref{p:clt} is given in Section \ref{sec:fluct}. A key simplification in the proof is to exploit the fact that~$\Za$ is the maximiser of $\Psi_t$ over $D$ and not over $\N$ (as was the case in \cite{MPS}); this ensures that, conditionally on $\mathcal{F}_t$, the random variables $Q_t(z), z \in \mathcal{K}_t,$ are an i.i.d.\ sequence. \smallskip

Combined with Proposition \ref{p:pq}, we are now able to give a full asymptotic description of the solution in the subcritical and critical regimes, completing the proofs of Theorems \ref{t:main1} and \ref{t:main3}. In the supercritical regime, we make use of the fact that, conditional on $\mathcal{F}_t$, the random variable $Q_t$ converges in the limit to a normally distributed random variable (in fact, all that is required is that the limiting measure be non-atomic). Combined with Proposition \ref{p:pq} and the growth of $\text{\rm Var}_{\mathcal{F}_t} Q_t$, after averaging over the $\sigma$-algebra $\mathcal{F}_t$ we deduce Theorem \ref{t:main2}.\smallskip

To conclude, we present heuristics as to why our choice of $\mathcal{K}_t$ in the supercritical regime is \emph{just} sufficient to guarantee enough fluctuations in \eqref{heur3}. To get a sense of the scale of \eqref{heur3}, we appeal to the results in \cite{CM86} on the sum of top order statistics of i.i.d.\ Pareto random variables (although we cannot apply these results in our setting, they are useful for the discussion). From \cite{CM86}, the fluctuations of the sum of the top $k$ order statistics of $\eta(\Za)$ i.i.d.\ Pareto random variables is of order $\eta(\Za)^{1/\alpha} k^{1/2-1/\alpha}$. Using the typical properties of $|\mathcal{K}_t|$ in $\mathcal{E}_t^2$, equation \eqref{heur2}, and the symmetry of the model, the approximate scale of \eqref{heur3} is therefore 
\[ \xi(\Za)^{-1} \eta(\Za)^{1/\alpha} \left( \eta(\Za) \left[ \eta(\Za) / r_t^{-2/\alpha} \right]^{-\alpha/(\alpha-2)} f_t^{-\alpha} \right)^{1/2-1/\alpha} \approx f_t^{-(\alpha-2)/2}, \]
and hence tends to infinity but only barely, from where we conclude the result. \smallskip

In the case $\alpha=2$, the sum of the top $k$ order statistics is instead $\eta(\Za)^{1/2}(\log k)^{1/2}$, and so the scale of \eqref{heur3} is approximately
\[ \xi(\Za)^{-1} \eta(\Za)^{1/2} \left( \alpha f_t^{-1} r_t / \eta(r_t)   - \log \left( r_t/\eta(r_t) \right)\right)^{1/2} \approx  \alpha f_t^{-1/2}  .\]
Again this tends to infinity but only barely, and so we reach the same conclusion.

\subsection{Completing the proofs of the main results}
We finish this section by completing the proofs of Theorems \ref{t:main0}--\ref{t:main3}, assuming the four key intermediate propositions \ref{u12}--\ref{p:clt}.

\begin{proof}[Proof of Theorem~\ref{t:main0}]
This is a straightforward combination of Propositions \ref{u12} and \ref{p:e}.
\end{proof}

\begin{proof}[Proof of Theorem~\ref{t:main1}]
It suffices to show that, as $t\to\infty$, 
\begin{align*}
\log\frac{u(t,\Za)}{u(t,-\Za)}\to 0
\qquad\text{in probability}.
\end{align*}
By Proposition~\ref{p:pq} we have, for each $c>0$, eventually as $t \to \infty$,
\begin{align*}
\Prob\Big(\Big|\log\frac{u(t,\Za)}{u(t,-\Za)}\Big|>c\Big)
\le\Prob\Big(\Big\{|Q_t|>\frac c 2\Big\}\cap \mathcal{E}_t\Big)
+\Prob(\mathcal{E}_t^c).
\end{align*} 
The last term tends to zero by Proposition~\ref{p:e}. For the first term we have
\begin{align}
\label{yyy1}
\Prob\Big(\Big\{|Q_t|>\frac c 2\Big\}\cap \mathcal{E}_t\Big) \le \mathrm{E} \Big[\Prob_{\mathcal{F}_t}\Big(|Q_t|>\frac c 2\Big)\one\{\mathcal{E}_t\}\Big]+2\Prob(\mathcal{E}_t^c),
\end{align}
where the error term $\Prob(\mathcal{E}_t^c)$ arises since $\mathcal{E}_t$, which needs to be taken out of the conditional probability, is not $\mathcal{F}_t$-measurable. Taking into account that $\mathrm{E}_{\mathcal{F}_t}Q_t=0$ since
$|\mathcal{K}_t^+|=|\mathcal{K}_t^-|$ in this regime, we have by Chebychev's inequality and Proposition~\ref{p:clt}
\begin{align*}
\Prob_{\mathcal{F}_t}\Big(|Q_t|>\frac c 2\Big)
\le \frac{4}{c^2}\text{\rm Var}_{\mathcal{F}_t}Q_t\to 0
\end{align*}
almost surely on the event $\mathcal{E}_t$. Hence the expression in~\eqref{yyy1} converges to zero by the dominated convergence theorem.
\end{proof}

\begin{proof}[Proof of Theorem~\ref{t:main2}]
Similar to the proof of Theorem~\ref{t:main1}, by Proposition~\ref{p:e} it suffices to show that, for each $c>0$, as $t\to\infty$, 
\begin{align*}
\Prob\Big(\Big\{\Big|\log\frac{u(t,\Za)}{u(t,-\Za)}\Big|<c\Big\}\cap \mathcal{E}_t\Big)\to 0.
\end{align*}
By Proposition~\ref{p:pq} it is then enough to prove that, as  $t\to\infty$, 
\begin{align*}
\Prob\big(\big\{|Q_t + P_t|<2c\big\}\cap \mathcal{E}_t\big)\to 0,
\end{align*}
for which, in turn, it suffices to show that
\begin{align*}
\mathrm{E} \Big[\Prob_{\mathcal{F}_t}\big(|Q_t + P_t|<2c\big)\one\{ \mathcal{E}_t\}\Big]\to 0.
\end{align*}
Observe that, similarly to the proof of Theorem~\ref{t:main1}, the event $\mathcal{E}_t$ can be taken out of the conditional probability since its probability tends to one.  Now, by the dominated convergence theorem, it remains to prove that
\begin{align}
\label{yyy2}
\Prob_{\mathcal{F}_t}\big(|Q_t  + P_t|<2c\big)\to 0
\end{align}
almost surely on $\mathcal{E}_t$. Observe that 
\begin{align}\label{eq:qtvt}
Q_t=V_t\sqrt{\text{\rm Var}_{\mathcal{F}_t}Q_t} +\mathrm{E}_{\mathcal{F}_t}Q_t.
\end{align}
Hence~\eqref{yyy2} is equivalent to showing that, almost surely
\begin{align}
\label{yyy3}
\Prob_{\mathcal{F}_t}\Big(V_t\in \big[\text{\rm Var}_{\mathcal{F}_t}Q_t\big]^{-\frac 1 2}  (-P_t-\mathrm{E}_{\mathcal{F}_t}Q_t-2c, - P_t- \mathrm{E} _{\mathcal{F}_t}Q_t+2c)\Big)\to 0.
\end{align}
Since $P_t$, $\mathrm{E}_{\mathcal{F}_t}Q_t$, and $\text{\rm Var}_{\mathcal{F}_t}Q_t$ are $\mathcal{F}_t$-measurable, and the length of the interval on the right-hand side of $\in$ tends to zero by~\eqref{var2} in Proposition~\ref{p:clt}, \eqref{yyy3} now follows from~\eqref{clt} there.
\end{proof}

\begin{proof}[Proof of Theorem~\ref{t:main3}] By Proposition~\ref{p:pq} 
and using~\eqref{eq:qtvt}, there is an event $\mathcal{E}_t^\ast$ with probability tending to one such that
\begin{align*}
\log\frac{u(t,\Za)}{u(t,-\Za)}=V_t\sqrt{\text{Var}_{\mathcal{F}_t}Q_t}+o(1)
\end{align*}
almost surely on $\mathcal{E}_t^\ast$, since $\mathrm{E}_{\mathcal{F}_t}Q_t=0$ due to the fact that $|\mathcal{K}_t^+|=|\mathcal{K}_t^-|$ in this regime. Hence, first restricting on the event $\mathcal{E}_t\cap \mathcal{E}_t^\ast$ and then dropping the restriction on $\mathcal{E}_t^\ast$ as it is no longer needed, we have by Proposition~\ref{p:e} for every $x\in \R$
\begin{align*}
\Prob\Big(\log\frac{u(t,\Za)}{u(t,-\Za)}<x\Big)
&-\Prob\Big(\big\{V_t\sqrt{\text{Var}_{\mathcal{F}_t}Q_t}+o(1)<x\big\}\cap \mathcal{E}_t \Big)\\
&\le \Prob(\mathcal{E}_t^c)+2\Prob((\mathcal{E}_t^\ast)^c)\to 0.
\end{align*}
Further, again by Proposition~\ref{p:e} we have
\begin{align*}
\Prob\Big(\big\{V_t\sqrt{\text{Var}_{\mathcal{F}_t}Q_t}+o(1)<x\big\}\cap \mathcal{E}_t\Big)
- \mathrm{E} \Big[F_{V_t}\Big(\frac{x+o(1)}{\sqrt{\text{Var}_{\mathcal{F}_t}Q_t}}\Big)\one\{\mathcal{E}_t\}\Big]
\le 2\Prob(\mathcal{E}_t^c)\to 0,
\end{align*}
where the error term arises since $\mathcal{E}_t$ is not $\mathcal{F}_t$-measurable. By~\eqref{clt} in Proposition~\ref{p:clt} and the dominated convergence theorem we have 
\begin{align*}
\mathrm{E} \Big[F_{V_t}\Big(\frac{x+o(1)}{\sqrt{\text{Var}_{\mathcal{F}_t}Q_t}}\Big)\one\{\mathcal{E}_t\}\Big]
- \mathrm{E} \Big[\Phi\Big(\frac{x+o(1)}{\sqrt{\text{Var}_{\mathcal{F}_t}Q_t}}\Big)\one\{\mathcal{E}_t\}\Big]\to 0.
\end{align*}
Now by uniform continuity of $\Phi$ and Proposition~\ref{p:e} we obtain 
\begin{align*}
 \mathrm{E} \Big[\Phi\Big(\frac{x+o(1)}{\sqrt{\text{Var}_{\mathcal{F}_t}Q_t}}\Big)\one\{\mathcal{E}_t\}\Big]
- \mathrm{E}\Big[\Phi\Big(\frac{x}{\sqrt{\text{Var}_{\mathcal{F}_t}Q_t}}\Big)\Big]
\to 0.
\end{align*}
Finally, since $\Phi$ is continuous and bounded we have by~\eqref{var3} in Proposition~\ref{p:clt}  
\begin{align*}
\mathrm{E} \Big[\Phi\Big(\frac{x}{\sqrt{\text{Var}_{\mathcal{F}_t}Q_t}}\Big)\Big]
\to \mathrm{E}_* \Big[\Phi\Big(\frac{x}{\sqrt{2\beta} \sigma B}\Big)\Big]
\end{align*}
as required, where $\mathrm{E}_*$ denotes expectation with respect to $B$.
\end{proof}


\smallskip
\section{Preliminaries}
\label{sec:prelim}

In this section we state some preliminary results. We begin by establishing Proposition \ref{u12}, which follows closely the proof of the equivalent statement in \cite{MPS}. Next, we develop point process machinery that allows us to control the asymptotic behaviour of the high points of $\xi$; this machinery will be heavily used in Sections \ref{sec:iso}--\ref{sec:fluct}. Finally, we derive asymptotic properties of the functions $\eta$ and $N$, and also of the subset $\mathcal{K}_t$.

\subsection{Proof of Proposition \ref{u12}}

The first step is to establish the negligibility of paths that, up to time $t$, either (i) make more than $R_t$ jumps or, (ii) do not hit either of the sites $\Za$ or $-\Za$, i.e.\ that, as $t \to \infty$, almost surely
\begin{align}
\label{e:u1}
 U(t)^{-1}  \E \Big[ \exp\Big\{\int_0^t \xi(X_s)ds\Big\} \one\{J_t > R_t \ \text{ or } \  \tau_{\{-\Za, \Za\} } > t \} \Big]  \mathbf{1}_{\mathcal{E}^1_t} \to 0 , 
 \end{align}
where $\tau_A=\inf\{t>0:X_t\in A\}$ is the hitting time of the set $A$ by $(X_s)$. Equation \eqref{e:u1} follows in a near identical manner to the proof of~\cite[Prop.\ 3.7]{MPS}, in particular making use of the assumption~\eqref{fg1} on the decay and growth of $f_t$ and $g_t$. The only difference is that now $\Za$ and $\Zb$ have been defined as maximisers of $\Psi_t$ over $D$ rather than $\Z$. This, however, is taken care of by the condition $\Psi_t(\Zepm)<\Psi_t(\Zb)$ in $\mathcal{E}^1_t$.
\smallskip

To conclude the proof, we establish the negligibility of the contribution to $U(t)$ from paths satisfying
\[   J_t \le R_t \ , \quad  \tau_{\{-\Za, \Za\} } \le t  \quad \text{and} \quad   X_t \notin  \{-\Za, \Za\}    , \]
i.e.\ that, as $t \to \infty$, almost surely
\begin{align*}
 U(t)^{-1}  \E \Big[ \exp\Big\{\int_0^t \xi(X_s)ds\Big\} \one\{   J_t \le R_t , \,   \tau_{\{-\Za, \Za\} } \le t  , \,   X_t \notin  \{-\Za, \Za\}    \} \Big]  \mathbf{1}_{\mathcal{E}^1_t} \to 0 , 
\end{align*}
This follows in an identical manner to the proof of~\cite[Lem.\ 4.4]{MPS}, in particular using the fact that, on the event $\mathcal{E}^1_t$, $ \xi(\Za) - \xi(z) > a_t f_t$ for all $z \in [-R_t, R_t] \setminus \{-\Za, \Za\}$.

\subsection{Point processes machinery}
\label{sec:pp}

In this section we develop point process machinery that allows us to analyse the high points of the potential field. Our approach is similar to in \cite{MPS} and elsewhere (see, e.g., \cite{KLMS, ST}), but contains several new ideas necessary to handle the critical regime. \smallskip

The main result of the section establishes the convergence of (a rescaled version of) the potential field to a Poisson point process. In the critical regime, we simultaneously establish the convergence of just the non-duplicated potential values. This latter convergence is rather non-standard: in order to get a non-trivial limit we need to use a different scaling to the standard one (used in \cite{KLMS, MPS, ST} for instance), and the limiting Poisson point process is spatially inhomogeneous. \smallskip

We begin by describing the Poisson point processes that appear in the limit. Abbreviate $\rho=1/(\alpha-1)$. To describe the limit of the potential field, we work in the state space
\begin{align*}
G = \{(x,y): x\ge 0, y>\rho x\}  
\end{align*}
equipped with a topology in which a set is relatively compact if and only if its distance from the line $y=\rho x$ is positive, and let $\Pi$ be a Poisson point process with the intensity measure 
\[\mu(dx\otimes dy)=dx\otimes \frac{\alpha}{|y|^{\alpha+1}}dy \]
on the state space $G$. Observe that $\mu$ is finite for every relatively compact set in $G$ according to~\cite[Eq.\ (5.2)]{MPS}, which ensures that $\Pi$ is well-defined. To describe the limit of the non-duplicated potential in the critical case, we instead work in the state space
\[ \hat{G} =  [0,\infty) \times (0,\infty] \]
equipped with its usual topology, and let $\hat \Pi$ be a Poisson point process with the intensity measure 
\[ \hat \mu(dx\otimes dy)=\frac{2\beta}{\alpha}x^{\frac{2}{\alpha}-1}dx\otimes \frac{\alpha}{|y|^{\alpha+1}}dy \]
on the state space $\hat{G}$, independent of $\Pi$. Again observe that $\hat\mu$ is finite for every relatively compact set in $\hat{G}$, which ensures that $\hat \Pi$ is well-defined. Finally, let $\varnothing$ be the empty point process on $G$. In the sequel, we denote by the same symbols the restriction of these point processes to subsets of $[0,\infty)\times\R$. Denote the probability and expectation corresponding to the above Poisson point processes (as well as all other Poisson point processes defined in this section) by $\Prob_{*}$ and $\mathrm{E}_*$. 
\smallskip

We next define the rescaled versions of the potential that will converge to the limit processes defined above. As mentioned, we need different scaling in order to examine the potential and the non-duplicated potential respectively, and for the latter the scaling further depends on whether $\alpha > 2$ or $\alpha = 2$. As such, define the following scaled versions of the potential
\begin{align*}
\Pi^{\ssup{d}}_s&=\sum_{z\in D}\e_G\Big(\frac{z}{s}, \frac{\xi(z)}{s^{1/\alpha}}\Big),
\qquad
\Pi^{\ssup e}_s=\sum_{z\in E}\e_G\Big(\frac{z}{s}, \frac{\xi(z)}{s^{1/\alpha}}\Big),\\
\hat\Pi^{\ssup e}_s&=\sum_{z\in E}\e\Big(\frac{z}{s}, \frac{\xi(z)}{s^{2/\alpha^2}}\Big),\qquad\tilde\Pi^{\ssup e}_s=\sum_{z\in E}\e\Big(\frac{z}{s}, \frac{\xi(z)}{(s/\log s)^{1/2}}\Big),
\end{align*}
where $\e(x,y)$ denotes the Dirac measure placing mass on $(x,y)$ and $\e_G(x,y)$ denotes its restriction to~$G$. It is easy to check that these measures are all almost surely finite on $G$ since $\rho>0$ and $\alpha>1$. Denote $\Pi_s=\Pi^{\ssup d}_s+\Pi^{\ssup e}_s$. \smallskip

The main proposition in this section is the following.

\begin{prop}
\label{lppp}
As $s \to \infty$, $(\Pi^{\ssup {d}}_s, \Pi^{\ssup e}_s)$ converges in law to $(\Pi, \emptypp)$. In particular $\Pi_s$ converges in law to $\Pi$. In the critical regime with $\alpha>2$, $(\Pi^{\ssup {d}}_s, \hat\Pi^{\ssup e}_s)$ converges in law to $(\Pi, \hat\Pi)$, whereas in the critical regime with $\alpha=2$, $(\Pi^{\ssup {d}}_s, \tilde\Pi^{\ssup e}_s)$ converges in law to $(\Pi, \hat\Pi)$, both as $s\to\infty$.
\end{prop}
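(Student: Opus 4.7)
The plan is to verify the two classical sufficient conditions for convergence of a sequence of point processes to a Poisson process: (i) convergence of intensity measures on a sufficient class of relatively compact sets, and (ii) a null-array condition ruling out accumulation of multiple points. Both are well suited to our setting since each rescaled process is built from the independent triangular array of Pareto marks $(\xi_0(z))$ together with independent duplication Bernoullis. Joint convergence and independence of the two coordinates of the limit will follow by computing the joint Laplace functional and factorising through these two independent sources of randomness.

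\smallskip

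For the duplicated part, I would fix a relatively compact $A\subset G$ and observe that the topology on $G$ ensures $y\ge\rho x+\delta$ uniformly on $A$ for some $\delta>0$. Consequently $ys^{1/\alpha}\ge 1$ for all large $s$, and the Pareto tail identity $\Prob(\xi_0(z)>ys^{1/\alpha})=(ys^{1/\alpha})^{-\alpha}$ applies directly. This yields
\begin{align*}
\mathrm{E}_*[\Pi^{\ssup{d}}_s(A)]=\sum_{z\in\N_0}p(z)\,\mathbf{1}\{z/s\in A_x\}\cdot s^{-1}\int_{A_y(z/s)}\frac{\alpha}{y^{\alpha+1}}\,dy,
\end{align*}
which, using $p(z)\to 1$, is a Riemann sum converging to $\mu(A)$. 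The null-array condition follows from a direct bound $\Prob_*(\Pi^{\ssup{d}}_s(A)\ge 2)=O(s^{-1})$ obtained by summing a similar expression over pairs of sites. The analogous computation for $\Pi^{\ssup{e}}_s$ has $q(z)$ in place of $p(z)$; since $q(z)\to 0$, the intensity vanishes, yielding the empty process limit.

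\smallskip

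In the critical regime, the different rescaling in the $y$-coordinate changes the Pareto tail probability to order $s^{-2/\alpha}$ when $\alpha>2$ and $\log s/s$ when $\alpha=2$. To identify the density of the limit intensity $\hat\mu$ I would invoke Lemma~\ref{1101}, whose role is to convert the integrated asymptotic $\eta(n)\sim\beta\kappa(n)$ into the pointwise asymptotic $q(n)\sim(2\beta/\alpha)n^{2/\alpha-1}$ for $\alpha>2$, and $q(n)\sim\beta/\log n$ for $\alpha=2$, exploiting the eventual monotonicity of $p$. Substituting this into
\begin{align*}
\mathrm{E}_*[\hat\Pi^{\ssup{e}}_s(A)]=s^{-2/\alpha}\sum_{z/s\in A_x}q(z)\int_{A_y}\frac{\alpha}{y^{\alpha+1}}\,dy
\end{align*}
then recognises the Riemann sum as converging to $\hat\mu(A)$, and the analogous computation with the scale $(s/\log s)^{1/2}$ handles the $\alpha=2$ case.

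\smallskip

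For joint convergence, each $z\in\N_0$ belongs to exactly one of $D,E$ via an independent Bernoulli that is independent of $(\xi_0(z))$. Conditioning on the Pareto sample and then taking expectation of the joint Laplace functional through the duplication Bernoullis yields factorisation in the limit, giving independence of the two coordinates of the limit point process. The main obstacle will be the critical regime: the non-constant density $x^{2/\alpha-1}$ arising from the asymptotic of $q(n)$ forces a careful uniform Riemann sum approximation on scaled intervals, and the logarithmic corrections in the $\alpha=2$ case must be tracked with care. The crucial input throughout is the monotonicity assumption on $p$ via Lemma~\ref{1101}, which is what distinguishes this Poisson convergence argument from the standard ones used for fixed $p$ in earlier PAM literature.
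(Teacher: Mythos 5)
Your overall strategy is, at its core, the same as the paper's: the substance is the per-site Pareto tail computation together with $p(z)\to1$, $q(z)\to0$, Lemma~\ref{1101} in the critical regime, and a joint Laplace functional to capture the pair; the paper packages exactly this by viewing the pair as one point process on $G\cup\bar G$ and checking convergence of its Laplace functional. However, two of your steps are flawed as written. First, the null-array verification: the claimed bound $\Prob\big(\Pi^{\ssup d}_s(A)\ge 2\big)=O(s^{-1})$ is false for a relatively compact $A$ with $\mu(A)>0$ --- the limit is Poisson, so $\Prob_*(\Pi(A)\ge 2)>0$, and summing over pairs of sites produces a quantity of order $\mu(A)^2$, not $O(s^{-1})$. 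What a Grigelionis-type criterion for superpositions of independent sparse contributions actually requires is a condition on the individual summands: each site contributes at most one point (trivial here) and $\max_{z}\Prob\big((z/s,\xi(z)/s^{1/\alpha})\in A\big)\to 0$; with that correction the marginal argument goes through, and one still needs the uniform integrability in $x$ for sets such as $\{y\ge \rho x+\delta\}$, which are relatively compact in $G$ but unbounded in $x$.

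Second, the independence of the two limit coordinates does not come from ``two independent sources of randomness'': both coordinates are built from the same Pareto sample, and the Bernoulli at site $z$ decides into which coordinate the point at $z$ falls, so the prelimit pair is dependent and its joint Laplace functional does not factorise exactly. The factorisation only appears asymptotically: the per-site factor is the mixture $p(z)\,\mathrm{E}\,e^{-h_1}+q(z)\,\mathrm{E}\,e^{-h_2}$, whose logarithm splits to first order into the two separate contributions, with quadratic errors summing to $o(1)$; carrying out this expansion is precisely the paper's Taylor-expansion step, so this part of your plan must be executed, not merely asserted. Likewise, in the $\alpha=2$ critical case the ratio $\log s/\log z$ is not uniformly close to $1$, and one needs the explicit splitting of the sum at $z\approx s/(\log s)^2$ (as in the paper) to dispose of the small-$z$ range --- you flag this but do not do it. None of this is fatal: once the null-array condition is stated correctly and the joint Laplace expansion is actually performed, your argument coincides with the paper's.
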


Before proving Proposition \ref{lppp}, we state and prove a simple lemma deriving the asymptotic behaviour of $q$ from that of $\eta$. The proof is a straightforward consequence of the fact that $q(n) \to 0$ is eventually decreasing.

\begin{lemma} 
\label{1101}
In the critical regime 
\[ q(n)\sim \begin{cases}\frac{2\beta}{\alpha} n^{\frac{2}{\alpha}-1}&\mbox{if }\alpha>2,\\
\frac{\beta}{\log n}&\mbox{if }\alpha=2,
\end{cases} \] 
as $n\to\infty$.   
\end{lemma}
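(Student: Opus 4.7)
The plan is a standard monotone-density argument, extracting the asymptotics of $q$ from those of its cumulative sum $\eta$ via the (eventual) monotonicity of $q$. Since $p$ is eventually increasing, $q$ is eventually decreasing; fix $N_0$ past which this holds. For integers $N_0 \le m < n$, writing $\eta(n)-\eta(m) = \sum_{z=m+1}^{n} q(z)$ and bounding each summand by $q(n)$ from below and $q(m+1)$ from above gives the sandwich
\begin{equation*}
(n-m)\, q(n) \;\le\; \eta(n) - \eta(m) \;\le\; (n-m)\, q(m+1).
\end{equation*}
Every subsequent step reduces to a careful choice of $m$ in terms of $n$ (or vice versa) combined with the hypothesis $\eta(n)\sim\beta\kappa(n)$.

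In the case $\alpha>2$, I would fix $\epsilon\in(0,1)$ and take $m=\lfloor n(1-\epsilon)\rfloor$. The left half of the display rearranges to $q(n)\le (\eta(n)-\eta(m))/(n-m)$, and substituting $\eta(n)\sim\beta n^{2/\alpha}$ together with $n-m\sim \epsilon n$ gives
\[ \limsup_{n\to\infty} q(n)\, n^{1-2/\alpha} \;\le\; \beta\cdot\frac{1-(1-\epsilon)^{2/\alpha}}{\epsilon}. \]
Taylor expanding the bracket and sending $\epsilon\downarrow 0$ produces the target constant $2\beta/\alpha$. Symmetrically, setting $n=\lceil m(1+\epsilon)\rceil$ in the right-hand inequality yields $q(m+1)\ge (\eta(n)-\eta(m))/(n-m)$, and the analogous computation delivers $\liminf_{m\to\infty} q(m+1)\, m^{1-2/\alpha}\ge 2\beta/\alpha$; since $(m+1)^{1-2/\alpha}\sim m^{1-2/\alpha}$, this transfers to a bound on $q$ at a generic index. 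Combining the two bounds gives $q(n)\sim (2\beta/\alpha)\, n^{2/\alpha-1}$.

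The case $\alpha=2$ follows the same template with $\kappa(n)=n/\log n$. The simplification is that $\log(n(1\pm\epsilon)) = \log n + O(1)$, so a direct computation gives $\kappa(n)-\kappa(n(1-\epsilon))\sim \epsilon n/\log n$ for each fixed $\epsilon\in(0,1)$; the ratio $(\eta(n)-\eta(m))/(n-m)$ therefore converges to $\beta/\log n$ at a rate independent of $\epsilon$. Both the upper and lower bounds thus collapse immediately to $\beta$ without needing a subsequent limit in $\epsilon$, yielding $q(n)\sim \beta/\log n$.

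There is no serious obstacle here: the only point requiring attention is the order of limits (first $n\to\infty$ at fixed $\epsilon$, then $\epsilon\downarrow 0$), which is what lets the argument work despite $q$ being \emph{a priori} only eventually monotone rather than regularly varying. As flagged in the remark following Theorem~\ref{t:main3}, the monotonicity of $p$ is used here precisely to perform this deduction; without it the lemma would need to be an assumption rather than a conclusion.
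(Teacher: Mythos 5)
Your proof is correct and rests on the same mechanism as the paper's: the eventual monotonicity of $q$ is used to sandwich the increment $\eta(n)-\eta(m)$ over a window of length of order $\varepsilon n$, and this is compared with the hypothesis $\eta(n)\sim\beta\kappa(n)$. The paper packages this as a contradiction argument (choosing $\delta$ so that the graphs $y=(1+x)^{2/\alpha}$ and a suitable line intersect), whereas you run the two-sided sandwich directly with windows $m=\lfloor(1-\varepsilon)n\rfloor$, resp.\ $n=\lceil(1+\varepsilon)m\rceil$, and let $\varepsilon\downarrow 0$ at the end -- an essentially cosmetic difference; your explicit treatment of the $\alpha=2$ case, which the paper only indicates is handled similarly, is also correct.
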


\begin{proof} 
We focus on the $\alpha>2$ case; the $\alpha=2$ is handled in a similar way. Denote $a=2/\alpha$ for brevity. Let $\e>0$ and choose $\delta>0$ so small that the graphs $y=(1+x)^a$ and $y=\frac{1+\delta}{1-\delta}+\frac{1-\e}{1-\delta}ax$ intersect in two 
positive points, which we denote $x_1<x_2$.  Since $\eta(n)\sim\beta n^a$ there exists $n_0\in \N$
such that $(1-\delta)\beta n^a<\eta(n)<(1+\delta)\beta n^a$ for all $n\ge n_0$, and also $q(n)$ is decreasing for all $n\ge n_0$. 
Let us show that $q(n)>(1-\e)\beta a n^{a-1}$ eventually. 
\smallskip

Suppose it is not the case. Then we can find $n\ge n_0$ and $k\in \N$ such that $q(n)\le (1-\e)\beta a n^{a-1}$ and $k/n \in (x_1,x_2)$. By monotonicity we have  
\begin{align*}
(1-\delta) \beta(n+k)^a<\eta(n+k)=\eta(n)+\sum_{i=n+1}^{n+k}q(i)\le (1+\delta)\beta n^a+(1-\e)\beta k a  n^{a-1} .
\end{align*}
Hence 
\begin{align*}
\Big(1+\frac k n\Big)^a<\frac{1+\delta}{1-\delta}+\frac{1-\e}{1-\delta}\cdot a\cdot \frac k n, 
\end{align*}
which contradicts $ k/ n \in (x_1,x_2)$. Hence $\liminf\limits_{n\to\infty} q(n) / (\beta a n^{a-1}) \ge 1$, and $\limsup\limits_{n\to\infty} q(n) / (\beta a n^{a-1})\le 1$ is similar.
\end{proof}

\begin{proof}[Proof of Proposition \ref{lppp}]
The proof follows the lines of~\cite[Lem.\ 5.1]{MPS} and starts in the same way for each statement. 
Define the point process
\begin{align*}
\Sigma_s=\sum_{z\in \N_0}\e_G\Big(\frac{z}{s},\frac{\xi(z)}{s^{1/\alpha}}\one\{z\in D\}\Big)
+\sum_{z\in \N}\e_{\bar G}\Big(\frac{z}{s},-\frac{\xi(z)}{\omega_s}\one\{z\in E\}\Big),
\end{align*}
where $\bar G$ is the reflection with respect to the $x$-axis of $G$ (for the first statement) or 
$\hat{G}$ (for the second and third statements), $\e_{\bar G}$ is the restriction of the Dirac measure on $\bar G$, 
and $\omega_s=s^{1/\alpha}$ (for the first statement), $\omega_s=s^{2/\alpha^2}$  (for the second statement) or $\omega_s=(s/\log s)^{1/2}$ (for the third statement). Let $\Sigma$ be a Poisson point process on $G\cup \bar G$ with the intensity measure $\mu^{*}$ which equals $\mu$ on $G$ and is zero (for the first statement) or $\hat\mu$ (for the second and third statements) on $\bar G$.\smallskip

It suffices to show that $\Sigma_s$ converges in law to $\Sigma$ on the state space $G\cup \bar G$, 
as $\Pi_s^{\ssup{d}}$ can be represented 
by the restriction of $\Sigma_s$ to the upper half plane 
and $\Pi_s^{\ssup e}$ (for the first statement), $\hat \Pi_s^{\ssup e}$ (for the second statement) or $\tilde\Pi_s^{(e)}$ (for the third statement)  by the restriction of $\Sigma_s$ to the lower half plane reflected with respect to the $x$-axes. 
\smallskip

Let $\mathcal{C}_K^+$ denote the set of positive continuous functions $h:G\cup \bar G\to\mathbb{R}$ with compact support. For any $s$, denote by
\begin{align*}
\mathcal{L}_s(h)=\mathrm{E} \exp\Big\{-\int hd\Sigma_s\Big\}
\qquad\text{and}\qquad
\mathcal{L}(h) =  \mathrm{E}_* \exp\Big\{-\int hd\Sigma\Big\},
\end{align*}
the Laplace transforms of $\Sigma_s$ and $\Sigma$, where $h\in \mathcal{C}_K^+$. We will denote by the same symbol the extension of $h$ by zero to $\R^2$. 
Recall from~\cite[Prop.\ 3.6]{resnick} that since $\Sigma$ is a Poisson point process its Laplace transform is given by 
\begin{align}
\label{lt}
\log \mathcal{L}(h)=-\iint_{G\cup\bar G}(1-e^{-h(x,y)})\mu^*(dx,dy).
\end{align}
By~\cite[Prop.\ 3.19]{resnick} it suffices to show that $\mathcal{L}_t(h) \to \mathcal{L}(h)$ for all $h\in \mathcal{C}_K^+$.
\smallskip

Suppose the support of $h$ is contained in  $\{(x,y):x\ge 0, y\ge \rho x+c\}\cup \{(x,y):x\ge 0, y\le -\rho x-c\}$
(for the first statement), in 
$\{(x,y):x\ge 0, y\ge \rho x+c\}\cup \big( [0,\infty)\times [-\infty,-c]\big)$ (for the second statement), or in $\{(x,y):x\ge 0, y\ge \rho x+c\}\cup \big( [0,c)\times [-\infty,-c]\big)$ (for the third statement)
for some $c>0$. 
Following the lines of~\cite[Lem.\ 5.1]{MPS}, we have, 
using the convention $p(0)=1$ and $q(0) = 0$,  
\begin{align}
\log \mathcal{L}_s(h)
=\sum_{z=0}^{\infty}\log \Big(
&p(z)\mathrm{E}\Big[\exp\Big\{-h
\Big(\frac{z}{s},\frac{\xi(z)}{s^{1/\alpha}}\Big)
\one\Big\{\Big(\frac{z}{s},\frac{\xi(z)}{s^{1/\alpha}}\Big)\in G
\Big\}\Big\}\Big]\notag\\
+&q(z)\mathrm{E}\Big[\exp\Big\{-h
\Big(\frac{z}{s},-\frac{\xi(z)}{\omega_s}\Big)
\one\Big\{\Big(\frac{z}{s},-\frac{\xi(z)}{\omega_s}\Big)\in \bar G\Big\}\Big\}
\Big]\Big).
\label{ee5}
\end{align}

Integrating with respect to $\xi$ and rescaling the variable by $s^{1/\alpha}$, we compute 
\begin{align}
\mathrm{E}&\Big[\exp\Big\{-h
\Big(\frac{z}{s},\frac{\xi(z)}{s^{1/\alpha}}\Big)
\one\Big\{\Big(\frac{z}{s},\frac{\xi(z)}{s^{1/\alpha}}\Big)\in G
\Big\}\Big\}\Big]\notag\\
&=1-\frac{1}{s}\int_{0}^{\infty} \Big[1-\exp\Big\{-h
\Big(\frac{z}{s},u\Big)\one\Big\{\Big(\frac{z}{s},u\Big)\in G\Big\}\Big\}
\Big]\frac{\alpha du}{u^{\alpha+1}}
\label{ee1}
\end{align}
for all $s$ such that $s^{-1/\alpha}<c$, and the integral is uniformly bounded since 
\begin{align}
\label{iub}
\int_{0}^{\infty} \Big[1-\exp\Big\{-h
\Big(\frac{z}{s},u\Big)\one\Big\{\Big(\frac{z}{s},u\Big)\in G\Big\}\Big\}\Big\}
\Big]
\frac{\alpha du}{u^{\alpha+1}}
\le \int_{c}^{\infty} 
\frac{\alpha du}{u^{\alpha+1}}<\infty.
\end{align}

For the first statement, the second expectation in~\eqref{ee5} is treated similarly to the first one. 
For the second and third statements, we integrate with respect to $\xi$ and rescale the variable by~$\omega_s$
\begin{align}
\mathrm{E}&\Big[\exp\Big\{-h
\Big(\frac{z}{s},-\frac{\xi(z)}{\omega_s}\Big)
\one\Big\{\Big(\frac{z}{s},-\frac{\xi(z)}{\omega_s}\Big)\in \bar G
\Big\}\Big\}\Big]\notag\\
&=1-\frac{1}{\omega_s^{\alpha}}\int_{0}^{\infty} \Big[1-\exp\Big\{-h
\Big(\frac{z}{s},-u\Big)\one\Big\{\Big(\frac{z}{s},-u\Big)\in \bar G\Big\}\Big\}
\Big]
\frac{\alpha du}{u^{\alpha+1}}
\label{ee11}
\end{align}
for all $s$ such that $1/\omega_s<c$, and the integral is uniformly bounded by~\eqref{iub}.
\smallskip

Doing the Taylor expansion we obtain 
\begin{align}
\log \mathcal{L}_s(h)=
&-\sum_{z=0}^{\infty}\frac{p(z)}{s}\int_{0}^{\infty} \Big[1-\exp\Big\{-h
\Big(\frac{z}{s},u\Big)\one\Big\{\Big(\frac{z}{s},u\Big)\in G\Big\}\Big\}\Big]
\frac{\alpha du}{u^{\alpha+1}}\notag\\
&-\sum_{z=0}^{\infty}\frac{q(z)}{\omega_s^{\alpha}}\int_{0}^{\infty} \Big[1-\exp\Big\{-h
\Big(\frac{z}{s},-u\Big)\one\Big\{\Big(\frac{z}{s},-u\Big)\in \bar G\Big\}\Big\}\Big]
\frac{\alpha du}{u^{\alpha+1}}+o(1).
\label{ee12}
\end{align}

For the first statement we use  $p(z)\to 1$, $q(z)\to 0$ and $\omega_s^{\alpha}=s$ in order to conclude that
the second term in~\eqref{ee12} disappears and 
$\log \mathcal{L}_s(h) $ converges to 
\begin{align*}
-\int_0^{\infty}\int_0^{\infty}\big(1-\exp\big\{-h(x,y)\one\{(x,y)\in G\}\big\}\big)\frac{\alpha dxdy}{y^{\alpha+1}}=\log \mathcal{L}(h)
\end{align*}
given by~\eqref{lt}.
\smallskip

For the second statement, we have by Lemma~\ref{1101} ($\alpha>2$ case) 
\begin{align*}
\frac{q(z)}{\omega_s^{\alpha}}=\frac{2\beta}{\alpha s}\Big(\frac{z}{s}\Big)^{\frac{2}{\alpha}-1}(1+o(1)),
\end{align*}
where $o(1)$ is with respect to $z\to\infty$ and independent of $s$. Using this for the second term in~\eqref{ee12}
and $p(z)\to 1$ for the first term, we also arrive at $\log \mathcal{L}_s(h) \to \log \mathcal{L}(h) $ given by \eqref{lt}.
\smallskip

For the third statement, we have by Lemma~\ref{1101} ($\alpha=2$ case)
\[ \frac{q(z)}{\omega_s^2}=\frac{\beta}{s}\cdot\frac{\log s}{\log z}(1+o(1)), \]
where $o(1)$ is with respect to $z\to\infty$ and independent of $s$. We write the second term in~\eqref{ee12} as
\begin{align*}
&\sum_{z=0}^{\left\lfloor \frac{s}{(\log s)^2}\right\rfloor}\frac{q(z)}{\omega_s^2}\int_0^\infty\Big[1-\exp\Big\{-h
\Big(\frac{z}{s},-u\Big)\one\Big\{\Big(\frac{z}{s},-u\Big)\in \bar G\Big\}\Big\}\Big]
\frac{2 du}{u^{3}}\\
+&\sum_{z=\left\lfloor \frac{s}{(\log s)^2}\right\rfloor+1}^{\infty}\frac{q(z)}{\omega_s^2}\int_0^\infty\Big[1-\exp\Big\{-h
\Big(\frac{z}{s},-u\Big)\one\Big\{\Big(\frac{z}{s},-u\Big)\in \bar G\Big\}\Big\}\Big]
\frac{2 du}{u^{3}}, 
\end{align*}
and we show that the first term here is negligible. Indeed we can upper bound its absolute value by 
\[ \frac{\log s}{s} \left(\left\lfloor \frac{s}{(\log s)^2}\right\rfloor+1\right)\int_0^\infty\frac{2 du}{u^3}\to0 \]
as $s\to\infty$. Now observe that for $z\ge s/(\log s)^2$ and $z\le cs$ we have $\log s/\log z \sim 1$ as $s\to\infty$, and so we again arrive at $\log\mathcal{L}_s(h)\to\log \mathcal{L}(h)$ given by~\eqref{lt}.
\end{proof}

We next establish the convergence of certain functionals of the above point processes. This allows us to state a scaling limit for the maximiser $\Za$ and its potential value $\xi(\Za)$, as well as to give asymptotic properties of other high values of the potential. \smallskip

Given a point measure $\Sigma$, we say that $x\in\Sigma$
if $\Sigma(\{x\})>0$.  
Let the positive random variables 
$X^{\ssup 1}, X^{\ssup 2}$ and $Y^{\ssup 1}, Y^{\ssup 2}$ be defined by the properties that 
\begin{align*}
(X^{\ssup 1}, Y^{\ssup 1}) &\in \Pi, \text{ and if } (x,y) \in\Pi\text{ then }y-\rho x\le Y^{\ssup 1} -\rho X^{\ssup 1}, \\
(X^{\ssup 2}, Y^{\ssup 2}) &\in \Pi,
\text{ and if } (x,y) \in \Pi  \setminus\{(X^{\ssup 1}, Y^{\ssup 1})\} \text{ then }y-\rho|x|\le Y^{\ssup 2}-\rho X^{\ssup 2} .
\end{align*}
It can be proved in the same way as in~\cite[Lem.\ 5.2]{MPS}
that, almost surely, the random variables $X^{\ssup 1}, X^{\ssup 2}, Y^{\ssup 1}$ and $Y^{\ssup 2}$ are well-defined and satisfy $Y^{\ssup 1}-\rho X^{\ssup 1} >Y^{\ssup 2}-\rho X^{\ssup 2} >0$. 
\smallskip

Denote by $Z_t^{\ssup e}$ a maximiser of $\Psi_t$ over $E$, 
and by $Z_t^{\ssup{1*}}$ and $Z_t^{\ssup{2*}}$ the first and second maximisers of $\Psi_t$ over $\N_0$, respectively. Their existence is standard. 

\begin{prop}\label{LB1}
As $t\to\infty$, 
\begin{align}
\label{oo1}
\Big(\frac{\Za}{r_t},\frac{\Zb}{r_t},\frac{\xi(\Za)}{a_t},\frac{\xi(\Zb)}{a_t}\Big)
&\Rightarrow (X^{\ssup 1},X^{\ssup 2},Y^{\ssup 1},Y^{\ssup 2}),\\
\Big(\frac{\Psi_t(\Za)}{a_t},\frac{\Psi_t(\Za)}{a_t}\Big)
& \Rightarrow (Y^{\ssup 1}-\rho X^{\ssup 1}, Y^{\ssup 2}-\rho X^{\ssup 2}).
\label{oo2}
\end{align}
In particular, 
\begin{align}
\label{lB}
\frac{(\Za)^{1/\alpha}}{\xi(\Za)}\Rightarrow \frac{(X^{\ssup 1})^{1/\alpha}}{Y^{\ssup 1}},
\end{align}
and the density of the pair $(X^{\ssup 1}, Y^{\ssup 1})$ is given by 
\begin{align*}
p(x,y)= \alpha y^{-\alpha-1}\exp\big\{-(y-\rho x)^{1-\alpha}\big\}\one\{y > \rho x > 0\}.
\end{align*}
Further,
\begin{align}
\label{oo7}
\Prob\big(Z_t^{\ssup e}\notin\{ Z_t^{\ssup{1*}} , Z_t^{\ssup{2*}} \} \big) =  \Prob\big(\Za=Z_t^{\ssup{1*} }, \Zb=Z_t^{\ssup{2*}}\big)\to 1.
\end{align}
\end{prop}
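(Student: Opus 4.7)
The plan is to deduce all assertions from the point process convergence in Proposition~\ref{lppp} applied with $s = r_t$, after rescaling the functional $\Psi_t$ so that it becomes asymptotically the affine functional $\Lambda(x,y) := y - \rho x$ on $G$. Writing $x = z/r_t$ and $y = \xi(z)/a_t$, a direct computation using the identities $r_t/(t a_t) = 1/\log t$ and $\log a_t = \rho(\log t - \log\log t)$ gives
\[
\frac{\Psi_t(z)}{a_t} = y - \rho x + \frac{\rho x \log\log t - x \log y}{\log t},
\]
so that $\Psi_t/a_t$ converges to $\Lambda$ uniformly on compact subsets of $G$ bounded away from the line $y = \rho x$. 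The first and second $\Lambda$-maximisers of $\Pi$ are by definition $(X^{\ssup 1}, Y^{\ssup 1})$ and $(X^{\ssup 2}, Y^{\ssup 2})$, and the goal is to show that these are the distributional limits of the $\Psi_t$-maximisers over $\Pi^{\ssup d}_{r_t}$.

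The bridge between prelimit and limit is a continuous mapping argument on a fixed compact set $K \subset G$ bounded away from the line $y = \rho x$. For $K$ sufficiently large, the top two $\Lambda$-maximisers of $\Pi$ lie in $K$ with probability close to one, while a deterministic truncation (using $\xi(z) \ge 1$ together with a Pareto tail bound on $\max_{|z| \le C r_t} \xi(z)$) forces the top two $\Psi_t$-maximisers of $\Pi^{\ssup d}_{r_t}$ to also lie in $K$ with high probability. On the space of finite point measures on $K$, the map taking a configuration to its top two $\Lambda$-values is continuous at $\Pi|_K$ almost surely (the limiting values being a.s.\ distinct), so Proposition~\ref{lppp} and the continuous mapping theorem yield~\eqref{oo1}; composing with further continuous maps gives \eqref{oo2} and, via $(x,y) \mapsto x^{1/\alpha}/y$, also \eqref{lB}. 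The density formula comes from the standard Poisson argument
\[
\Prob_*\bigl((X^{\ssup 1}, Y^{\ssup 1}) \in dx\, dy\bigr) = \alpha y^{-\alpha-1}\, \exp\bigl(-\mu\{(x',y'): y' - \rho x' > y - \rho x\}\bigr)\, dx\, dy
\]
for $y > \rho x > 0$, together with the direct integration $\mu\{y' - \rho x' > c\} = c^{1-\alpha}$ (which uses $\rho(\alpha-1) = 1$). Finally, \eqref{oo7} follows from the $\emptypp$-convergence of the second coordinate in Proposition~\ref{lppp}: since $\Pi^{\ssup e}_{r_t}$ places no points in $K$ with probability tending to one, the top two $\Psi_t$-maximisers over $\N_0$ must coincide with those over $D$.

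The main technical hurdle is the uniform deterministic truncation: one needs upper bounds on $\Psi_t(z)/a_t$ both for $|z|/r_t$ very large (where the entropy term $(|z|/t)\log\xi(z)$ must be shown to dominate $\xi(z)$ with overwhelming probability, via a Pareto tail estimate combined with a union bound) and in a neighbourhood of the line $y = \rho x$ (where the intensity of $\Pi$ accumulates but $\Lambda$ is small). These are routine but slightly delicate Pareto tail computations of the type carried out in \cite{KLMS, MPS}, which must be coordinated with the choice of $K$ in order for the continuous mapping step to be rigorous.
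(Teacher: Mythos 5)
Your proposal is correct and follows essentially the same route as the paper: both rest on the point-process convergence of Proposition~\ref{lppp}, a continuous-mapping/truncation argument identifying the limits of the top two $\Psi_t$-maximisers (which the paper outsources to \cite[Prop.~5.5]{MPS}), the empty limit of $\Pi^{\ssup e}_{r_t}$ away from the line $y=\rho x$ to obtain \eqref{oo7}, and the standard Poisson computation with $\mu\{y'-\rho x'>c\}=c^{1-\alpha}$ for the density. The only difference is one of ordering: the paper first establishes the convergence for the maximisers over $\N_0$ and then transfers it to the maximisers over $D$ via \eqref{oo7}, whereas you argue directly for the maximisers of $\Pi^{\ssup d}_{r_t}$ and deduce \eqref{oo7} afterwards.
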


\begin{proof} 
First, we claim that the convergences~\eqref{oo1} and~\eqref{oo2} hold if we replace $\Za$ and $\Zb$ by $Z_t^{\ssup{1*}}$ and $Z_t^{\ssup{2*}}$. The proof of this relies on $\Pi_s \Rightarrow \Pi$, which follows from Proposition~\ref{lppp}, but is otherwise the same as that of~\cite[Prop.\ 5.5]{MPS}. 
\smallskip

Second, observe that 
\begin{align*}
\frac{\Psi_t(z)}{a_t}=\frac{\xi(z)}{a_t}-\rho \frac{z}{r_t} +o(1)\frac{z}{r_t}+
o(1)\frac{z}{r_t}\log\frac{\xi(z)}{a_t}
\end{align*}
and hence, for any $c_1,c_2>0$, 
we have, using $a_t^{\alpha}=r_t$, 
\begin{align*}
\Prob&\Big(\frac{\Psi_t(Z_t^{\ssup e})}{a_t}>c_1, \frac{Z_t^{\ssup e}}{r_t}<c_2,c_1<\frac{\xi(Z_t^{\ssup e})}{a_t}<c_2\Big)
\le \Prob\Big(\frac{\xi(Z_t^{\ssup e})}{a_t}-\rho \frac{Z_t^{\ssup e}}{r_t}>c_1/2\Big)\\
&\le \Prob\Big(\Pi_{r_t}^{\ssup e}\big(\big\{(x,y):x\ge 0, y>\rho x+c_1/2\big\}\big)\neq 0\Big)\\
&\to \Prob_{*} \Big(\emptypp\big(\big\{(x,y):x\ge 0, y>\rho x+c_1/2\big\}\big)\neq 0\Big)=0
\end{align*}
by Proposition~\ref{lppp}. Since $c_1$ can be chosen arbitrarily small and $c_2$ can be chosen arbitrarily large,  
we obtain, taking into account the first step of the proof, that 
\begin{align*} 
\Prob\big(\Psi_t(Z_t^{\ssup e})<\Psi_t(Z_t^{\ssup{2*}})  \big)\to 1.
\end{align*}
We have now established~\eqref{oo7}, and hence~\eqref{oo1} and~\eqref{oo2} follow from the first step of the proof. 
The weak convergence ~\eqref{lB} is an obvious consequence of~\eqref{oo1}, and the density $p$ can be computed similarly to~\cite[Lem.\ 5.3]{MPS}.
\end{proof}

\begin{lemma} 
\label{l:os}
As $t\to\infty$, 
\begin{align*}
\Prob\big(\xi(\Za)-\xi(z)>a_tf_t\text{ \rm for all } |z|  \in [0, R_t] \setminus \Za  \big)\to 1.
\end{align*} 
\end{lemma}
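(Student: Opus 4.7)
My plan is to recast the lemma as showing an asymptotic gap between $\xi(\Za)/a_t$ and the next-largest value of $\xi(z)/a_t$ over $|z|\in[0,R_t]\setminus\{\Za\}$, and to exploit the point-process convergences of Propositions~\ref{lppp} and~\ref{LB1}. Via the Skorokhod representation theorem, I would couple everything so that the convergences $\Pi_{r_t}^{\ssup d}\to \Pi$, $\Pi_{r_t}^{\ssup e}\to\emptypp$, $\hat\Pi^{\ssup{e,-}}_{r_t}\to\emptypp$ and $(\Za/r_t,\xi(\Za)/a_t)\to(X^{\ssup 1},Y^{\ssup 1})$ all hold almost surely, where $\hat\Pi^{\ssup{e,-}}_{r_t}:=\sum_{z\in E}\e_G(z/r_t,\xi_0(-z)/a_t)$ is the negative-side exclusive process (whose convergence to $\emptypp$ is a verbatim repeat of the proof of Proposition~\ref{lppp} using independence of the $\xi_0(-z)$). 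Since $Y^{\ssup 1}-\rho X^{\ssup 1}>0$ almost surely, it then suffices to show that for every sufficiently small $\e>0$, eventually $\xi(\Za)-\xi(z)>a_t\e$ for all $|z|\in[0,R_t]\setminus\{\Za\}$; the condition $f_t\to 0$ then delivers the lemma.

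I would split the argument by whether $|z|\in E$ or $|z|\in D$. For exclusive sites, fix $\e\in(0,(Y^{\ssup 1}-\rho X^{\ssup 1})/3)$ and take the compact-in-$G$ set $K_\e:=\{(x,y)\in G:0\le x\le X^{\ssup 1}+1,\ y-\rho x\ge\e\}$. Vague convergence gives $\Pi^{\ssup e}_{r_t}(K_\e),\hat\Pi^{\ssup{e,-}}_{r_t}(K_\e)\to 0$, and since $R_t/r_t\to X^{\ssup 1}$ eventually puts the range of interest inside $\{0\le x\le X^{\ssup 1}+1\}$, one obtains $\xi(z)/a_t<\rho|z|/r_t+\e\le\rho X^{\ssup 1}+2\e$ for every $z$ with $|z|\in E\cap[1,R_t]$ and $t$ large, which lies below $\xi(\Za)/a_t\to Y^{\ssup 1}$ by at least $(Y^{\ssup 1}-\rho X^{\ssup 1})/3$. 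For duplicated sites, uniqueness of $(X^{\ssup 1},Y^{\ssup 1})$ as maximiser of $y-\rho x$ on $\Pi$ (Proposition~\ref{LB1}) implies that every other $(x,y)\in\Pi$ with $0\le x\le X^{\ssup 1}$ satisfies $y<Y^{\ssup 1}-\rho(X^{\ssup 1}-x)\le Y^{\ssup 1}$ strictly. Since $\Pi$ has no accumulation points in~$G$, the supremum $Y^*$ of $y$ over $\Pi\cap\{0\le x\le X^{\ssup 1}+1,\ y\ge(Y^{\ssup 1}+\rho X^{\ssup 1})/2\}\setminus\{(X^{\ssup 1},Y^{\ssup 1})\}$ is attained and strictly less than $Y^{\ssup 1}$ almost surely. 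Setting $\delta:=(Y^{\ssup 1}-Y^*)/2>0$ and combining simplicity of $\Pi$ at $(X^{\ssup 1},Y^{\ssup 1})$ with vague convergence, one concludes that $\xi(\Za)-\xi(z)\ge a_t\delta/2$ for all $z\in D\cap[0,R_t]\setminus\{\Za\}$ eventually.

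Combining the two parts and choosing $\e<\delta/2$ yields the lemma. The main technical obstacle is the sliver $z\in(\Za,R_t]$ in the duplicated case, corresponding to $x\in(X^{\ssup 1},X^{\ssup 1}(1+f_t)]$ in rescaled coordinates: this region is asymptotically empty yet is not a continuity set for $\Pi$ at any finite $t$, so handling it rigorously requires combining simplicity of $\Pi$ at $(X^{\ssup 1},Y^{\ssup 1})$ with the observation that any point of $\Pi$ with $y$ close to $Y^{\ssup 1}$ must have $x$ close to $X^{\ssup 1}$ (by maximality of $y-\rho x$), forcing it into the same shrinking neighbourhood as $(\Za/r_t,\xi(\Za)/a_t)$.
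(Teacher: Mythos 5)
Your overall mechanism is sound and is, at bottom, the same one the paper's proof rests on: everything reduces to the point-process convergence of Proposition~\ref{lppp} together with the description of the maximiser in Proposition~\ref{LB1}. The paper's own proof is much shorter, though: using~\eqref{oo7} it replaces $\Za$ by the overall maximiser $Z_t^{\ssup{1*}}$ with probability tending to one, then uses the exact distributional symmetry of $(\xi(-z))_{z\ge 0}$ and $(\xi(z))_{z\ge 0}$ given $D$ (together with the fact that $\Za$ and $R_t$ are determined by the duplicated data) to reduce to sites $z\in[0,R_t]$ only, and finally invokes the argument of \cite[Prop.\ 5.6]{MPS}. In particular the auxiliary process $\hat\Pi^{\ssup{e,-}}_{r_t}$ you introduce for the negative exclusive values is not needed there, although your treatment of it is legitimate: its convergence to $\emptypp$ does follow by the same computation, and joint convergence costs nothing since that limit is deterministic (the joint convergence with the maximiser data is exactly what the proof of Proposition~\ref{LB1} supplies).

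There is, however, a concrete error in your duplicated-site step. Maximality of $y-\rho x$ at $(X^{\ssup 1},Y^{\ssup 1})$ only gives $y<Y^{\ssup 1}-\rho(X^{\ssup 1}-x)$ for the other atoms, which bounds $y$ by $Y^{\ssup 1}$ only when $x\le X^{\ssup 1}$. On your window $\{0\le x\le X^{\ssup 1}+1\}$ an atom of $\Pi$ with $x\in(X^{\ssup 1},X^{\ssup 1}+1]$ may have $y$ as large as $Y^{\ssup 1}+\rho$, and such atoms occur with positive probability; hence your $Y^*$ is not almost surely below $Y^{\ssup 1}$ and $\delta=(Y^{\ssup 1}-Y^*)/2$ can be negative, so the asserted gap $a_t\delta/2$ breaks down. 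The repair is essentially what you sketch in your final paragraph, but it must replace, not merely supplement, the $Y^*$ claim: take the deterministic-gap window only up to $x\le X^{\ssup 1}$ (that set is relatively compact in $G$, so the supremum over its finitely many atoms is attained and is strictly below $Y^{\ssup 1}$), and treat every prelimit atom with $x\le R_t/r_t$ and $y\ge\xi(\Za)/a_t-f_t$ by observing that it eventually lies in an arbitrarily small relatively compact neighbourhood $A$ of $(X^{\ssup 1},Y^{\ssup 1})$ chosen with $\Pi(\partial A)=0$ and $\Pi(A)=1$; vague convergence then gives $\Pi^{\ssup d}_{r_t}(A)=1$ eventually, and since the atom $(\Za/r_t,\xi(\Za)/a_t)$ already lies in $A$ while any violating duplicated site contributes an atom with a different first coordinate, no violation can occur. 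With that correction (and the routine subsequence/Skorokhod bookkeeping you gloss over, including the sites whose rescaled values fall below the line $y=\rho x$ and hence outside the state space), your argument goes through.
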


\begin{proof} It was shown in Proposition~\ref{LB1} that
$\Za=Z_t^{\ssup{1*}}$ 
with overwhelming probability. Hence, and by symmetry (notice that the potential values of non-duplicated sites on the negative half-line and positive half-line are independent and equal in law), it suffices to show that
\begin{align*}
\Prob\big(\xi(Z_t^{\ssup{1*}})-\xi(z)>a_tf_t\text{ for all }  z \in [0, R_t] \setminus \Za \big)\to 1.
\end{align*} 
This can be shown in the same way as in~\cite[Prop. 5.6]{MPS}. The proof uses $\Pi_s\Rightarrow \Pi$, which follows from Proposition~\ref{lppp}.
\end{proof}

\begin{lemma} 
\label{l:zeta}
Denote 
\begin{align*}
\zeta_t=\max_{|z|\in E, |z|< \Za }\xi(z).
\end{align*}
In the critical regime, as $t\to\infty$,
\begin{align*}
\lambda(t)^{1/2}\frac{\zeta_t}{a_t^{2/\alpha}}\Rightarrow F,
\end{align*}
where $F$ is a strictly positive random variable. 
\end{lemma}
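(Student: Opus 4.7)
My plan is to express $\zeta_t$ via the joint limit of several Poisson point processes, so that Proposition~\ref{lppp} combined with a continuous mapping argument yields the result. I would first record the identity
\begin{align*}
\zeta_t = \max\bigl\{\xi(z), \xi(-z) : z \in E \cap [1,\Za-1]\bigr\},
\end{align*}
and note that, conditionally on $D$, the family $\{\xi(z),\xi(-z):z\in E\}$ consists of i.i.d.\ Pareto($\alpha$) random variables independent of $\{\xi(z) : z \in D\}$, hence also independent of $\Za$. The processes $\hat\Pi^{\ssup e}_s$ and $\tilde\Pi^{\ssup e}_s$ only capture the values $\xi(z)$ with $z>0$; to also capture the $\xi(-z)$ values I would introduce the auxiliary process
\begin{align*}
\hat\Pi^{\ssup{e,-}}_s = \sum_{z\in E}\e\Bigl(\frac{z}{s},\frac{\xi(-z)}{s^{2/\alpha^2}}\Bigr),
\end{align*}
(with $(s/\log s)^{1/2}$ replacing $s^{2/\alpha^2}$ when $\alpha=2$). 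By exactly the same Laplace functional computation as in the proof of Proposition~\ref{lppp}, together with the independence of $\xi(-z)$ (for $z\in E$) from all other variables used there, one obtains the joint convergence
\begin{align*}
(\Pi^{\ssup d}_s, \hat\Pi^{\ssup e}_s, \hat\Pi^{\ssup{e,-}}_s) \Rightarrow (\Pi, \hat\Pi, \hat\Pi'),
\end{align*}
where $\hat\Pi$ and $\hat\Pi'$ are independent copies of the limit in Proposition~\ref{lppp}, both independent of $\Pi$.

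Next I would apply the continuous mapping theorem with $s=r_t$. Consider the functional sending $(\Sigma_1,\Sigma_2)$ to $\sup\{y : (x,y)\in\Sigma_2,\ x\le x_*(\Sigma_1)\}$, where $x_*(\Sigma_1)$ is the $x$-coordinate of the unique maximiser of $y-\rho x$ on $\Sigma_1$. This functional is continuous at almost every realisation of $(\Pi,\hat\Pi+\hat\Pi')$: the maximiser $x_* = X^{\ssup 1}$ is uniquely defined almost surely (as established in Proposition~\ref{LB1}), and since the intensity measure of $\hat\Pi+\hat\Pi'$ restricted to $[0,X^{\ssup 1}]\times[c,\infty)$ is finite for every $c>0$, the supremum in the second coordinate is almost surely attained at an isolated point. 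Matching the scales through $r_t^{2/\alpha^2}=a_t^{2/\alpha}$ when $\alpha>2$, and $(r_t/\log r_t)^{1/2}\sim a_t/\sqrt{2\log t}$ when $\alpha=2$, this gives $\lambda(t)^{1/2}\zeta_t/a_t^{2/\alpha}\Rightarrow F$, with $F$ (up to an explicit positive constant) equal to the maximum $y$-coordinate of $\hat\Pi+\hat\Pi'$ restricted to $[0,X^{\ssup 1}]$.

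Strict positivity of $F$ is immediate: the intensity measure of $\hat\Pi+\hat\Pi'$ on $[0,X^{\ssup 1}]\times(0,\infty)$ has infinite total mass (from the non-integrable singularity of $y^{-\alpha-1}$ at zero), so almost surely this strip contains infinitely many points, all with strictly positive $y$-coordinate. The main technical obstacle is verifying the continuity of the mapping that simultaneously isolates both the unique maximiser $X^{\ssup 1}$ and the maximum $y$-coordinate over the random interval $[0,X^{\ssup 1}]$; this is where the two component processes, which live on state spaces with different topologies (the cone-based topology for $\Pi$ and the standard topology for $\hat\Pi$), must be reconciled. However, the argument follows along the same lines as in the proofs of Proposition~\ref{LB1} and Lemma~\ref{l:os}.
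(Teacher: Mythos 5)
Your overall strategy is essentially the paper's: the proof of Lemma~\ref{l:zeta} also rests on the critical-regime convergence in Proposition~\ref{lppp}, on identifying the relevant spatial window through $X^{\ssup 1}$, and on reading off the limit as a Poisson void probability $\exp\{-\beta x^{2/\alpha}c^{-\alpha}\}$; the strict positivity and the scale matching (including the harmless constant when $\alpha=2$) are fine. The packaging differs slightly: the paper works with one side, $\zeta_t^+=\max_{z\in E,\,z<\Za}\xi(z)$, and combines the two sides by conditional independence, rather than adjoining a third component $\hat\Pi^{\ssup{e,-}}_s$; your two-sided variant is legitimate, and the claimed extension of the Laplace-functional computation of Proposition~\ref{lppp} to the triple is routine, since conditionally on the duplication coins the values $\xi(-z)$, $z\in E$, are i.i.d.\ Pareto and independent of everything else. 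Also, the paper does not use a continuous mapping theorem but computes distribution functions directly after restricting to $G_\delta$ and then letting $\delta\downarrow 0$, which is how it sidesteps the continuity question you flag.

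There is, however, one genuine gap. In your continuous-mapping step you identify $x_*(\Pi^{\ssup d}_{r_t})$, the maximiser of $y-\rho x$ over the rescaled duplicated points, with $\Za/r_t$. That identification is not an identity at finite $t$: after rescaling, the maximiser of $y-\rho x$ on $\Pi^{\ssup d}_{r_t}$ corresponds to the maximiser over $D$ of the linearised functional $z\mapsto\xi(z)-\rho z\log t/t$, whereas $\Za$ maximises $\Psi_t(z)=\xi(z)-\frac{z}{t}\log\xi(z)$, which involves $\log\xi(z)$ rather than $\log t$. So the functional you feed into the continuous mapping theorem returns the maximum of the non-duplicated values below the linearised maximiser, not $\zeta_t$. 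Proposition~\ref{LB1}, which you cite for uniqueness of the maximiser, concerns the $\Psi_t$-maximisers and does not by itself bridge this discrepancy. The paper inserts a dedicated step for exactly this point: display \eqref{psibar} in its proof shows that, with probability tending to one, $\Za$ is also the maximiser of $z\mapsto\xi(z)-\frac{\rho z}{t}\log t$ over $D$ (proved as in \cite{MPS}, using only $\Pi^{\ssup d}_s\Rightarrow\Pi$), and only then expresses the event of interest in terms of $\Pi^{\ssup d}_{r_t}$ and $\hat\Pi^{\ssup e}_{r_t}$. Your argument needs this statement (or an equivalent joint convergence of $\Za/r_t$ with $\hat\Pi^{\ssup e}_{r_t}+\hat\Pi^{\ssup{e,-}}_{r_t}$) before the void-probability computation applies; the topology/continuity issue you identify as the main obstacle is real but secondary by comparison, and is avoided in the paper by the direct $G_\delta$ computation.
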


\begin{proof}
Observe that, for any $c>0$,  
\begin{align*}
\Prob\Big(\lambda(t)^{1/2}\frac{\zeta_t}{a_t^{2/\alpha}}<c\Big)=
\Big[\Prob\Big(\lambda(t)^{1/2}\frac{\zeta^+_t}{a_t^{2/\alpha}}<c\Big)\Big]^2,
\end{align*}
where 
\begin{align*}
\zeta^+_t=\max_{z\in E, z<\Za}\xi(z).
\end{align*}
It can be shown in the same way as in~\cite[Lem.\ A2]{MPS} that 
\begin{align}
\label{psibar}
\Prob\Big(\Za\text{ is the maximiser of }z\mapsto \xi(z)-\frac{\rho z}{t}\log t\text{ over }D\Big)\to 1. 
\end{align}
The proof only requires $\Pi_s^{\ssup d}\Rightarrow \Pi$, which we have by Proposition~\ref{lppp}. In other words, 
with overwhelming probability $\Za$ is defined by the property that 
\begin{align}
\label{iii}
\frac{\xi(z)}{a_t}-\rho\, \frac{z}{r_t}\le \frac{\xi(\Za)}{a_t}-\rho\, \frac{\Za}{r_t}
\text{ for all }z\in D.
\end{align}
Denote $G_{\delta}=\{(x,y):x\ge 0, y\ge \rho x +\delta\}$. By Proposition~\ref{lppp} for the critical case, using $r_t=a_t^{\alpha}$, Proposition~\ref{LB1} and~\eqref{iii} we have, for $\alpha>2$,
\begin{align*}
& \! \! \Prob\Big(\frac{\zeta^+_t}{a_t^{2/\alpha}}<c, \Big( \frac{\Za}{r_t},\frac{\xi(\Za)}{a_t}\Big)\in G_{\delta}\Big)\\
&=\int_{G_{\delta}}\Prob\Big(
\hat \Pi_{r_t}^{\ssup e}\big([0,x]\times [c,\infty)\big)=0, \Pi_{r_t}^{\ssup d}(dx\times dy)=1, \Pi_{r_t}^{\ssup d}
\big(\{(u,v):u-\rho v>x-\rho y\}\big)=0\Big)\\
&\to\int_{G_{\delta}} \Prob_{*} \Big(
\hat \Pi\big([0,x]\times [c,\infty)\big)=0\Big) \Prob_{*} \Big( \Pi(dx\times dy)=1, \Pi
\big(\{(u,v):u-\rho v>x-\rho y\}\big)=0\Big)\\
&=\int_{G_{\delta}}\exp\Big\{-\hat\mu\big([0,x]\times [c,\infty)\big)\Big\}p(x,y)dxdy
=\int_{G_{\delta}}\exp\big\{-\beta x^{2/\alpha}c^{-\alpha}\big\}p(x,y)dxdy.
\end{align*}
Since by Proposition~\ref{LB1}
\begin{align*}
\lim_{\delta\downarrow 0}\Prob\Big( \Big( \frac{\Za}{r_t},\frac{\xi(\Za)}{a_t}\Big)\not\in G_{\delta}\Big)= 0
\end{align*}
this implies 
\begin{align*}
\Prob\Big(\frac{\zeta^+_t}{a_t^{2/\alpha}}<c\Big)
= \mathrm{E}_* \exp\big\{-\beta (X^{\ssup 1})^{2/\alpha}c^{-\alpha}\big\}
\end{align*}
where on the right-hand side we have a distribution function of a positive random variable. The argument for $\alpha=2$ is the same except with $\hat\Pi_{r_t}^{(e)}$ replaced with $\tilde\Pi_{r_t}^{(e)}$.
\end{proof}

\subsection{The asymptotic behaviour of non-duplicated sites}

In this section we derive asymptotic statements for $N,\eta$ and $|\mathcal{K}_t|$. We begin with a general result about convergence of random variables which we will use in the proof of Lemma~\ref{l:kk}; we omit the proof since it is standard.

\begin{lemma}\label{l:conv}
Suppose $(X_t)$ is a random process with values in $(0,\infty)$ such that $X_t\to0$ in probability.
Suppose $h_t$ are complex-valued functions on $(0,\infty)$ bounded by one, and $a\in \mathbb{C}$ such that
\[
\lim_{\eta\to0}\lim_{t\to\infty}\sup_{x\in(0,\eta)}|h_t(x)- a| = 0.\]
Then $\mathbb{E}h_t(X_t) \to a$ as $t \to\infty$.
\end{lemma}

We now give a basic lemma on the asymptotic behaviour of $Z_t$.
\begin{lemma}
\label{l:0}
As $t \to \infty$, $\Za \to \infty$ almost surely. Moreover, $\Za/r_t$ is bounded away from zero and infinity in probability.
\end{lemma}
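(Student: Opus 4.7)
The plan is to treat the two assertions separately. The boundedness of $\Za/r_t$ away from zero and infinity in probability (I read the stated $a_t$ as a typo for $r_t$, which is the scale given by Proposition~\ref{LB1}) is immediate from the weak convergence $\Za/r_t \Rightarrow X^{\ssup 1}$, since $X^{\ssup 1}$ is almost surely strictly positive and finite; tightness and boundedness away from zero in probability follow directly.

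The interesting assertion is $\Za \to \infty$ almost surely: weak convergence does not entail almost sure divergence. My strategy is a direct path-wise comparison. Fix $K \in \N$ and set $M_K := \max_{z \in [0,K]} \xi(z)$, which is almost surely finite. For every $z \in D \cap [0,K]$ and every $t > 0$ we have $\Psi_t(z) \leq \xi(z) \leq M_K$. I would then produce a single far-out duplicated site $z^* \in D$ with $z^* > K$ and $\xi(z^*) > M_K + 1$; since $\Psi_t(z^*) = \xi(z^*) - (z^*/t) \log \xi(z^*) \to \xi(z^*)$ as $t \to \infty$, we obtain $\Psi_t(z^*) > M_K$ for all sufficiently large $t$. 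This forces the maximiser $\Za$ of $\Psi_t$ over $D$ to satisfy $\Za > K$ eventually, and arbitrariness of $K$ yields $\Za \to \infty$ almost surely.

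The one nontrivial ingredient is the existence of such a $z^*$, and this is the main (mild) obstacle. Since $p(n) \to 1$ we have $p(n) \geq 1/2$ eventually; and since $\xi_0$ has a continuous distribution, the event $\{n \in D\}$ for $n > 0$ is determined by the $n$-th duplication coin flip alone. Hence the events $\{n \in D\}_{n \in \N}$ are mutually independent and jointly independent of $\{\xi(m)\}_{m \in \N_0}$. Conditioning on $\xi|_{[0,K]}$ (which determines $M_K$), the Pareto tail gives
\[
\Prob\big(n \in D, \, \xi(n) > M_K + 1 \,\big|\, \xi|_{[0,K]}\big) = p(n) (M_K + 1)^{-\alpha}
\]
for $n > K$, and the sum of these conditional probabilities over $n > K$ is almost surely infinite because $(M_K+1)^{-\alpha}>0$ a.s.\ and $\sum_{n>K} p(n) = \infty$. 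A conditional Borel--Cantelli argument then yields infinitely many such $n$, from any of which we may take $z^*$. No point-process machinery beyond Proposition~\ref{LB1} is required.
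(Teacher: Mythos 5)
Your proof is correct, and your reading of $a_t$ as a typo for $r_t$ is the right one: Proposition~\ref{LB1} gives $\Za/r_t \Rightarrow X^{\ssup 1}$, and it is boundedness of $\Za/r_t$ that the paper later uses (e.g.\ in Lemma~\ref{l:etaeta}). For the second assertion you argue exactly as the paper does: weak convergence from Proposition~\ref{LB1} together with $0 < X^{\ssup 1} < \infty$ almost surely. For the first assertion the paper offers no argument of its own — it simply cites the i.i.d.\ case in \cite{KLMS} as standard — whereas you spell that standard argument out and, more importantly, you supply the one ingredient the citation glosses over: since $\Za$ maximises $\Psi_t$ over $D$ rather than over $\N$, one needs $\sup_{z\in D}\xi(z)=\infty$ almost surely, and your conditional Borel--Cantelli step (using $\sum_n p(n)=\infty$ and the independence of the duplication coins from $\xi$ on $\N_0$; the probability-zero coincidence $\xi_0(-n)=\xi_0(n)$ is harmless since the coin event is contained in $\{n\in D\}$) delivers exactly this. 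The comparison $\Psi_t(z)\le\xi(z)\le M_K$ on $[0,K]$ against $\Psi_t(z^*)\to\xi(z^*)>M_K+1$ then forces $\Za>K$ eventually, and intersecting over $K$ finishes the argument. What your route buys is a self-contained proof that makes explicit why the restriction of the maximiser to $D$ causes no trouble; what the paper's route buys is brevity by deferring to the known i.i.d.\ argument.
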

\begin{proof}
The first statement is standard, see Lemma~3.2 of~\cite{KLMS}; the second statement follows from Proposition~\ref{LB1}.
\end{proof}

We next give asymptotic properties of $N, \eta$ and $\mathcal{K}_t$, which allow us to derive the order of $|\mathcal{K}_t|$.
\begin{lemma} 
\label{l:nn}
If $\eta(n)$ converges then $N(n)$ converges almost surely. If $\eta(n)\to \infty$ then, as $n\to\infty$, 
\begin{align*}
\frac{N(n)}{\eta(n)}\to 1 \qquad \text{in probability}. 
\end{align*}
\end{lemma}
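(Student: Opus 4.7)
The plan is to exploit that $N(n) = \sum_{z=1}^{n} \one\{z \in E\}$ is a sum of independent Bernoulli random variables. By the construction in \eqref{eq:pq}, the events $\{z \in E\}$ for $z \in \N$ are mutually independent, with $\Prob(z \in E) = q(z)$, and so $\mathrm{E} N(n) = \sum_{z=1}^{n} q(z) = \eta(n)$. The two statements then become standard consequences of the first Borel--Cantelli lemma and Chebyshev's inequality, respectively.

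For the first statement, suppose $\eta(n)$ converges, i.e.\ $\sum_{z=1}^{\infty} q(z) = \sum_{z=1}^{\infty} \Prob(z \in E) < \infty$. The first Borel--Cantelli lemma then gives $\Prob(z \in E \text{ infinitely often}) = 0$, so almost surely only finitely many sites are non-duplicated. Hence $N(n)$ is eventually constant in $n$, and therefore converges almost surely.

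For the second statement, suppose $\eta(n) \to \infty$. Since the $\one\{z \in E\}$ are independent Bernoulli variables,
\begin{align*}
\mathrm{Var}(N(n)) = \sum_{z=1}^n q(z)(1-q(z)) \le \eta(n).
\end{align*}
Chebyshev's inequality then yields, for any $\varepsilon > 0$,
\begin{align*}
\Prob\Big(\Big|\frac{N(n)}{\eta(n)}-1\Big| > \varepsilon\Big)
\le \frac{\mathrm{Var}(N(n))}{\varepsilon^2 \eta(n)^2}
\le \frac{1}{\varepsilon^2 \eta(n)} \to 0,
\end{align*}
establishing the claimed convergence in probability. There is no real obstacle here: the statement is a textbook weak law of large numbers for a triangular array of independent Bernoulli variables, together with the elementary Borel--Cantelli observation in the summable case.
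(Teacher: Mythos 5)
Your proof is correct. The key fact you rely on, that the indicators $\one\{z\in E\}$, $z\ge 1$, are independent Bernoulli variables with success probabilities $q(z)$, is exactly the structural input the paper also uses (implicitly via the continuity of the Pareto distribution, so that resampling almost surely produces a distinct value), so there is no gap on that point.

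Your route differs mildly from the paper's in both halves. For the convergent case, the paper applies Markov's inequality to get $\Prob(N(n)>c)\le \eta/c$ and then uses monotonicity of $N(n)$ to conclude almost sure convergence, whereas you invoke the first Borel--Cantelli lemma to conclude that only finitely many sites are non-duplicated; both arguments are one line, and yours gives the slightly stronger (and more transparent) statement that $E$ is almost surely finite, which the paper in fact uses later (e.g.\ in Lemma~\ref{l:kk}). For the divergent case, the paper computes the characteristic function $\mathrm{E}\exp\{itN(n)/\eta(n)\}\to e^{it}$, while you use the variance bound $\mathrm{Var}(N(n))\le\eta(n)$ together with Chebyshev's inequality; your second-moment argument is more elementary and yields an explicit rate $1/(\varepsilon^2\eta(n))$, while the characteristic-function computation is the natural companion to the point-process calculations done elsewhere in the paper. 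Either way the conclusion is the same, and your write-up is complete as it stands.
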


\begin{proof} 
If $\eta(n)\to\eta<\infty$ then by Chebychev's inequality we have 
\begin{align*}
\text{Prob}(N(n)>c)\le \frac{\eta(n)}{c}\le \frac{\eta}{c}\to 0
\end{align*}
as $c\to\infty$. Hence $P(N(n) \to\infty)=0$ and $N(n)$ converges almost surely since it is increasing.  
\smallskip

If instead $\eta(n)\to \infty$ we have 
\begin{align*}
\mathrm{E}\exp\Big\{it\frac{N(n)}{\eta(n)}\Big\}
&=\prod_{z=1}^n \Big[1-q(z)+q(z)\exp\Big\{\frac{it}{\eta(n)}\Big\}\Big]
=\exp\Big\{\sum_{z=1}^{n}\log \Big(1+\frac{itq(z)}{\eta(n)}(1+o(1))\Big)\Big\}\\
&=\exp\Big\{\sum_{z=1}^{n}\frac{itq(z)}{\eta(n)}(1+o(1))\Big\}=\exp{it+o(1)}\to \exp\{it\}
\end{align*}
as required. 
\end{proof}
Recall from the beginning of Section~\ref{subsecphase} that we have extended the function $\eta$ so that $\eta:\mathbb{R}_+\to \mathbb{R}$.
\begin{lemma} 
\label{l:etaeta}
As $t\to\infty$, $\eta(\Za)/\eta(r_t)$ is bounded away from zero and infinity in probability.
\end{lemma}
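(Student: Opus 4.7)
The plan is to reduce the claim to a deterministic comparison $\eta(K r_t)/\eta(r_t)=O(1)$ for fixed constants $K$, and then to establish this comparison using the eventual monotonicity of~$q$.

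First I would use Lemma~\ref{l:0} (which via Proposition~\ref{LB1} really expresses tightness of $\Za/r_t$ away from $0$ and $\infty$) to pass from the random point $\Za$ to fixed multiples of $r_t$. Given $\varepsilon>0$, choose $0<c<C$ with $\Prob(c r_t\le\Za\le C r_t)\ge 1-\varepsilon$ for all large $t$. Since $\eta$ is non-decreasing, on this event we have the sandwich
\begin{align*}
\eta(c r_t)\le\eta(\Za)\le\eta(C r_t),
\end{align*}
so it suffices to show that $\eta(c r_t)/\eta(r_t)$ is bounded below by a strictly positive constant and $\eta(C r_t)/\eta(r_t)$ is bounded above, uniformly for all large~$t$.

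Next I would split into two regimes for $\eta$. If $\eta(n)$ is bounded it converges to some $\eta_\infty\in[0,\infty)$, and (ignoring the trivial case $\eta_\infty=0$) both ratios tend to $1$. If instead $\eta(n)\to\infty$, the hypothesis that $p(n)$ is eventually increasing translates into $q(n)$ being eventually decreasing, which supplies two ingredients. The first is a lower bound
\begin{align*}
\eta(n)\ge\sum_{z=\lceil n/2\rceil}^{n}q(z)\ge\frac{n}{2}\,q(n)
\end{align*}
for all large $n$. The second is that for any integer $K>1$ and large $n$, using decreasingness of $q$,
\begin{align*}
\eta(K n)-\eta(n)=\sum_{z=n+1}^{K n}q(z)\le (K-1)\,n\,q(n)\le 2(K-1)\,\eta(n),
\end{align*}
so $\eta(K n)\le(2K-1)\,\eta(n)$. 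An analogous calculation for $0<k<1$, namely $\eta(n)-\eta(kn)\le(1-k)\,n\,q(kn)\le\tfrac{2(1-k)}{k}\eta(kn)$, gives $\eta(kn)\ge\tfrac{k}{2-k}\,\eta(n)$. Applying these two comparisons at $n=r_t$ with the constants $c,C$ from the first step yields the required uniform two-sided bounds.

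The only delicate point is the doubling-type estimate $\eta(K n)/\eta(n)=O(1)$, which would fail for arbitrary positive sequences $q\to 0$ without some regularity; here it is precisely the eventual monotonicity of $q$ that makes the argument routine. This matches the authors' earlier remark that the assumption on $p$ being eventually increasing is used, in the non-critical regimes, only in this lemma, and could be replaced by regular variation of~$\eta$.
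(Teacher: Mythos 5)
Your proposal is correct and takes essentially the same route as the paper: both reduce the claim, via the tightness of $\Za/r_t$ from Lemma~\ref{l:0}, to a deterministic comparison of $\eta$ at arguments proportional to $r_t$, which is obtained from the eventual monotonicity of $q$. The only cosmetic difference is that you implement this comparison through the pointwise bound $n\,q(n)\le 2\eta(n)$ and a doubling estimate, whereas the paper rescales the integral $\int q$ to sandwich $\eta(ax)$ between $a\eta(x)$ up to additive constants and then plugs in the random ratio $a=\Za/r_t$ directly.
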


\begin{proof}
Let $n\in\N$ be such that $q(i)$ is decreasing for all $i\ge n$. For $0<a<1$ and $x>n/a$ we have 
\begin{align}
\label{ax}
\eta(ax)\ge \int_{n}^{ax}\!\! q(u)du=a\int_{n/a}^x \!\! q(av)dv\ge a\int_{n/a}^x \!\! q(v)dv=a\eta(x)-a\eta(n/a).
\end{align}
Similarly, for $a>1$ and $x>n$ we have 
\begin{align*}
\eta(ax)=\eta(an)+ \int_{an}^{ax}\!\! q(u)du
=\eta(an)+a\int_{n}^x \!\! q(av)dv
\le \eta(an)+a\int_{n}^x \!\! q(v)dv
\le a\eta(x)+\eta(an). 
\end{align*} 
This implies, for all $x>n\max\{1,1/a\}$, 
\begin{align*}
\left.\begin{array}{r} 
\displaystyle a\Big(1-\frac{\eta(n/a)}{\eta(x)}\Big)\\
\displaystyle 1
\end{array}
\right\}
\le\frac{\eta(ax)}{\eta(x)}\le
\left\{\begin{array}{ll} 1 & \qquad\text{ if }0<a<1,\\
\displaystyle
a+\frac{\eta(n)}{\eta(x)}& \qquad\text{ if }a\ge 1.\end{array}\right.
\end{align*}
It remains to apply these inequalities to $x=r_t$ and $a=\Za/r_t$, and the statement of the lemma follows from Lemma~\ref{l:0}.
\end{proof}

\begin{lemma}
\label{l:kk}
If $\eta(n)$ converges then $\mathcal{K}_t$ is bounded almost surely. If $\eta(n)\to \infty$ then, as $t\to\infty$,  
\begin{align}
\label{bip0}
\frac{\theta_t^{\alpha}|\mathcal{K}_t^+|}{N(\Za)}\to 1 
\qquad\text{and}\qquad
\frac{\theta_t^{\alpha}|\mathcal{K}_t^-|}{N(\Za)}\to 1 
\qquad
\text{in probability}. 
\end{align}
\end{lemma}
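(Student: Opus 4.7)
I treat the two cases separately, and in the second case reduce everything to a conditional binomial concentration estimate.

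\emph{Case $\eta(n)$ converges.} By Lemma \ref{l:nn}, $N(n)$ converges almost surely, so the set $E$ is almost surely finite. Since $\mathcal{K}_t \subset \{z \in \Z : |z| \in E\}$, this gives $|\mathcal{K}_t| \le 2|E| < \infty$ almost surely.

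\emph{Case $\eta(n) \to \infty$.} The key observation is that conditioning on the $\sigma$-algebra $\mathcal{G} = \sigma(D, \{\xi(z) : z \in D\})$ makes $\Za$, $N(\Za)$ and the index set $E \cap [1,\Za]$ all measurable, while leaving $\{\xi(z) : z \in E\}$ as an i.i.d.\ Pareto($\alpha$) sequence. Since $\Prob(\xi(z) > \theta_t) = \theta_t^{-\alpha}$ for any $\theta_t \ge 1$, conditionally on $\mathcal{G}$ we therefore have
\[
|\mathcal{K}_t^+| \,\sim\, \mathrm{Binomial}\bigl(N(\Za),\, \theta_t^{-\alpha}\bigr).
\]
Conditional Chebyshev then yields
\[
\Prob\Big(\Big|\frac{\theta_t^{\alpha}\, |\mathcal{K}_t^+|}{N(\Za)} - 1\Big| > \eps \,\Big|\, \mathcal{G}\Big) \,\le\, \frac{1}{\eps^{2}\, N(\Za)\, \theta_t^{-\alpha}},
\]
so by dominated convergence it suffices to show $N(\Za)\theta_t^{-\alpha} \to \infty$ in probability.

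Combining Lemmas \ref{l:nn} and \ref{l:etaeta} gives that $N(\Za)$ is comparable to $\eta(r_t) \to \infty$ in probability, so it remains to check $\eta(r_t)/\theta_t^{\alpha} \to \infty$ in each regime. This is trivial in the subcritical and critical regimes where $\theta_t = 1$. In the supercritical regime with $\alpha > 2$, a direct manipulation yields $\eta(r_t)/\theta_t^{\alpha} = f_t^{-\alpha} [r_t/\eta(r_t)]^{2/(\alpha-2)}$, which tends to infinity because $q(n) \to 0$ forces $r_t/\eta(r_t) \to \infty$. The supercritical $\alpha = 2$ case is analogous, using $a_t^{2} = r_t$ so that $\eta(r_t)/\theta_t^{2} = (\eta(r_t)/r_t)\,\exp(2r_t/(\eta(r_t)f_t))$, where the exponential dominates. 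The estimate for $|\mathcal{K}_t^-|$ follows from the same argument applied to the i.i.d.\ Pareto sequence $\{\xi(-z) : z \in E\}$, which is also independent of $\mathcal{G}$.

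The hard part is essentially just choosing the right conditioning: we must fix $\Za$ and identify which sites in $[1,\Za]$ are exclusive, but we must not condition on the potential values at those exclusive sites themselves, since they are exactly what determines membership in $\mathcal{K}_t$. Taking $\mathcal{G} = \sigma(D, \{\xi(z) : z \in D\})$ strikes this balance, after which everything reduces to an elementary binomial computation and the regime-by-regime asymptotic check above.
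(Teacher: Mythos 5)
Your proposal is correct and follows essentially the same route as the paper: condition on $\mathcal{G}=\sigma(D,\{\xi(z):z\in D\})$ so that $|\mathcal{K}_t^\pm|$ is conditionally Binomial$(N(\Za),\theta_t^{-\alpha})$, verify $N(\Za)\theta_t^{-\alpha}\to\infty$ regime by regime via Lemmas \ref{l:nn} and \ref{l:etaeta} and $\eta(r_t)\ll r_t$, then pass to the unconditional statement by dominated convergence. The only (immaterial) difference is that you use conditional Chebyshev for the binomial concentration where the paper invokes the characteristic function.
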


\begin{proof} 
If $\eta(n)$ converges then $E$ is bounded almost surely by Lemma~\ref{l:nn} and hence so is $\mathcal{K}_t^+$. Suppose instead $\eta(n)\to\infty$. Denote by $\mathcal{G}$ the $\sigma$-algebra generated by $D$ and $\{\xi(z):z\in D\}$, and denote the conditional expectation with respect to $\mathcal{G}$ by $\mathbb{E}_\mathcal{G}$. It is easy to see that, conditionally on $\mathcal{G}$, the events $\big\{z\in \mathcal{K}_t^+\big\}_{z \in E,z<\Za} $ are independent and have the same probability $\theta_t^{-\alpha} \le 1$. Hence $|\mathcal{K}_t^+|$ is a binomial random variable with parameters $N(\Za)$ and $\theta_t^{-\alpha}$. Thus the characteristic function of $\frac{\theta_t^{\alpha}|\mathcal{K}_t^+|}{N(\Za)}$ satisfies, for each $\lambda\in\mathbb{R}$,
\begin{align*}
\mathbb{E}\left[i\lambda\frac{\theta_t^{\alpha}|\mathcal{K}_t^+|}{N(\Za)}\right]=\mathbb{E}\left[\mathbb{E}_\mathcal{G}\left[i\lambda\frac{\theta_t^{\alpha}|\mathcal{K}_t^+|}{N(\Za)}\right]\right]&=\mathbb{E}\left[\left(1-\theta_t^{-\alpha}+\theta_t^{-\alpha}\exp\left\{\frac{i\lambda\theta_t^{\alpha}}{N(\Za)}\right\}\right)^{N(\Za)}\right]\\
&=\mathbb{E}h_{\theta_t^{-\alpha}}\left(\frac{\theta_t^\alpha}{N(\Za)}\right),
\end{align*}
where \[h_\delta(x)=\left(1-\delta+\delta e^{i\lambda x}\right)^{1/\delta x}.\]We will apply Lemma~\ref{l:conv} to show that this converges to $e^{i\lambda}$, which will complete the proof. In order to do so we must show that 
\begin{align}\label{eq:thetaN}\frac{\theta_t^{\alpha}}{N(\Za)}\to0\end{align}
 in probability, and 
 \begin{align}\label{eq:limsuph}
 \lim_{\eta\to0}\lim_{\delta\to0}\sup_{x\in(0,\eta)}|h_\delta(x)-e^{i\lambda}|=0.
 \end{align}

\smallskip
 
In the subcritical and critical regimes \eqref{eq:thetaN} follows from $\theta_t=1$ and Lemmas~\ref{l:0} and~\ref{l:nn}, which imply that 
that $N(\Za)\to\infty$ in probability. In the supercritical regime we have 
\begin{align*}
\frac{N(\Za)}{\theta_t^{\alpha}}=\frac{N(\Za)}{\eta(\Za)}
\cdot\frac{\eta(\Za)}{\eta(r_t)}\cdot \begin{cases}
 f_t^{-\alpha}\cdot \Big[\frac{r_t}{\eta(r_t)}\Big]^{\frac{2}{\alpha-2}}&\to \infty\quad\mbox{if }\alpha>2,\\
 \exp\left(\frac{2r_t}{\eta(r_t)f_t}-\log\frac{r_t}{\eta(r_t)}\right)&\to\infty\quad\mbox{if }\alpha=2,
 \end{cases}
\end{align*}
in probability by Lemmas~\ref{l:nn} and \ref{l:etaeta} and since $\eta(r_t)\ll r_t$, and so again \eqref{eq:thetaN} holds.

\smallskip
For \eqref{eq:limsuph}, observe that by doing a Taylor expansion with the remainder in Lagrange's form, we have
\[
h_\delta(x)=\exp\left\{\frac1{\delta x}\log\left(1-\delta+\delta e^{i\lambda x}\right)\right\}=\exp\left\{i\lambda +\frac{r_\delta(x)}{\delta x}\right\},
\]
where it can be shown that $|r_\delta(x)|\le 2\delta x^2\lambda^2$ for all $x$ sufficiently small. Hence, doing another Taylor expansion, we obtain
\[
|h_\delta(x)-e^{i\lambda}|=\Big|\exp\left\{\frac{r_\delta(x)}{\delta x}\right\}-1\Big|\le 2\Big|\frac{r_\delta(x)}{\delta x}\Big|\sup_{\kappa\in(0,x)}|e^\kappa|\le 5x\lambda^2,
\]
for all $x$ small enough, which clearly implies \eqref{eq:limsuph}.
\end{proof}

\begin{corollary}
\label{c:kk}
As $t\to\infty$, 
\begin{align}
\label{bip}
\frac{\theta_t^{\alpha}|\mathcal{K}_t^+|}{\eta(r_t)}
\qquad\text{ and }\qquad
\frac{\theta_t^{\alpha}|\mathcal{K}_t^-|}{\eta(r_t)}\end{align}
are bounded away from zero and infinity in probability. 
\end{corollary}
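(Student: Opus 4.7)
The strategy is to factorise
$$\frac{\theta_t^{\alpha}|\mathcal{K}_t^{\pm}|}{\eta(r_t)} = \frac{\theta_t^{\alpha}|\mathcal{K}_t^{\pm}|}{N(\Za)}\cdot\frac{N(\Za)}{\eta(\Za)}\cdot\frac{\eta(\Za)}{\eta(r_t)}$$
and control each factor using the preceding lemmas. Lemma~\ref{l:kk} yields that the first factor converges to $1$ in probability when $\eta(n)\to\infty$ (in the case of bounded $\eta$, both $|\mathcal{K}_t|$ and $\eta(r_t)$ are bounded and the claim reduces to its upper half, handled directly by the same lemma). Lemma~\ref{l:etaeta} yields that the third factor is bounded away from zero and infinity in probability. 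Hence the task reduces to showing that the middle factor $N(\Za)/\eta(\Za)$ is bounded away from zero and infinity (and in fact tends to~$1$) in probability.

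For the middle factor, Lemma~\ref{l:nn} supplies the pointwise statement $N(n)/\eta(n)\to 1$ in probability as $n\to\infty$, but a direct substitution with $n=\Za$ is illegitimate since $\Za$ is a random index. To circumvent this, I would first use Proposition~\ref{LB1} to restrict to the event $\Za\in[c_1 r_t, c_2 r_t]$, with $c_1<1<c_2$ chosen so this has probability at least $1-\varepsilon$. By monotonicity of $N$ and $\eta$, on this event $N(\Za)/\eta(\Za)$ lies in the interval $[N(c_1 r_t)/\eta(c_2 r_t),\, N(c_2 r_t)/\eta(c_1 r_t)]$. I would then apply Doob's $L^2$ maximal inequality to the mean-zero martingale $(N(n)-\eta(n))_n$, whose variance at time $n$ is bounded by $\eta(n)$, to obtain
$$\Prob\bigg(\max_{n\leq c_2 r_t}|N(n)-\eta(n)|>\delta\,\eta(c_1 r_t)\bigg)\leq\frac{C\,\eta(c_2 r_t)}{\delta^2\eta(c_1 r_t)^2}.$$
The inequalities derived in the proof of Lemma~\ref{l:etaeta} imply $\eta(c_i r_t)\asymp\eta(r_t)$ (using that $p$ is eventually increasing), so the right-hand side is of order $1/\eta(r_t)\to 0$. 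Combining these bounds, with probability $1-\varepsilon-o(1)$ the middle factor lies within an arbitrarily small neighbourhood of $[c_1/c_2, c_2/c_1]$, which may be made arbitrarily close to $\{1\}$ by taking $c_1$ and $c_2$ close to~$1$. The same argument applies simultaneously to $|\mathcal{K}_t^-|$ because, conditional on the $\sigma$-algebra $\mathcal{G}$ used in the proof of Lemma~\ref{l:kk}, $|\mathcal{K}_t^-|$ is also Binomial with parameters $(N(\Za),\theta_t^{-\alpha})$.

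The main obstacle is the random time change for the middle factor; it is resolved by combining Doob's maximal inequality with the near-homogeneity of $\eta$ under mild dilations established in the proof of Lemma~\ref{l:etaeta}, together with the confinement of $\Za$ to a scale-$r_t$ window provided by Proposition~\ref{LB1}.
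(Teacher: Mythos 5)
Your proposal is correct and takes essentially the same route as the paper, whose proof is simply the observation that the claim follows by combining Lemmas~\ref{l:nn}--\ref{l:kk} through exactly your three-factor decomposition $\frac{\theta_t^{\alpha}|\mathcal{K}_t^{\pm}|}{N(\Za)}\cdot\frac{N(\Za)}{\eta(\Za)}\cdot\frac{\eta(\Za)}{\eta(r_t)}$. Your sandwiching of $N(\Za)/\eta(\Za)$ between deterministic indices via Proposition~\ref{LB1}, monotonicity and Doob's maximal inequality (which is valid, though applying Lemma~\ref{l:nn} at the two deterministic endpoints $c_1 r_t$ and $c_2 r_t$ would already suffice) only makes explicit the random-index substitution that the paper treats as immediate.
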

\begin{proof}
This is a straightforward combination of Lemmas~\ref{l:nn}-\ref{l:kk}.
\end{proof}


\bigskip
\section{Significant paths}
\label{sec:paths}

We now embark on the proof of Proposition \ref{p:pq}. The first step, carried out in this section, is to further eliminate from consideration a class of paths that make a non-negligible contribution to the total solution $U(t)$. The extra control we gain over the remaining paths will be crucial in allowing us to represent, in Section \ref{sec:iso}, the ratio $u(t, \Za)/u(t, -\Za)$ as a sum over non-duplicated sites. \smallskip

Of course, by Theorem~\ref{t:main0} it is already enough to consider only paths such that $X_t \in \{-\Za,\Za\}$. Such a path necessarily makes at least $ |\mathcal{K}_t^+|$ or $|\mathcal{K}_t^-|$ visits to the set $\mathcal{K}_t$ by time $t$, depending on the endpoint of the path. Here we further eliminate paths that either, up to time $t$: (i) visit the set $\mathcal{K}_t$ too many additional times beyond the minimum; (ii) return to the set $\{-\Za,\Za\}$ too frequently; or (iii) make too long a loop originating from the set $\{-\Za,\Za\}$. \smallskip

In the critical regime we find that, unlike in \cite{MPS}, it is not possible to consider only paths that \textit{never} make additional visits to sites in $\mathcal{K}_t$; this would not give us the dominant portion of the solution. Instead, we need to consider paths that make a small number of extra visits, and argue later than this makes no significant difference to our representation of the ratio $u(t, \Za)/u(t,-\Za)$ as a sum over non-duplicated sites. \smallskip

We begin by introducing some path notation, which mirrors the set-up in \cite{MPS}. Denote by 
\begin{align*}
\mathcal{P}_{all}=\{y=(y_0,\dots,y_{\ell})\in \Z^{\ell+1}:\ell\in\N_0,|y_i-y_{i-1}|=1\text{ for all }1\le i\le \ell\}
\end{align*}
the set of all geometric paths on $\Z$.  For each path $y\in\mathcal{P}_{all}$, denote by $\ell(y)$ its length (counted as the number of edges). Denote by $(\tau_i)_{i\in \N_0}$
the sequence of the jump times of the continuous-time random walk $(X_t)$ and by 
\begin{align*}
P(t,y)=\{X_0=y_0, \, X_{\tau_0+\cdots+\tau_{i-1}}=y_i\text{ for all }1\le i\le \ell(y),  \, t-\tau_{\ell(y)}\le\tau_0+\cdots+\tau_{\ell(y)-1}<t\} 
\end{align*}
the event that the random walk has the trajectory $y$ up to time $t$. Let
\begin{align*}
U(t,y)=\E\Big[\exp\Big\{\int_0^t\xi(X_s)ds\Big\}\one_{P(t,y)}\Big]
\end{align*}
be the contribution of the event $P(t,y)$ to $U(t)$. 
\smallskip

Let $\mathcal{P}^t$ denote the subset of paths in $\mathcal{P}_\text{all}$ that start at the origin, end in $\{-\Za,\Za\}$, and have length at most $R_t$. For any $y\in \mathcal{P}^t$, the \emph{skeleton} \smallskip of $y$, denoted $\text{skel}(y)$, is the geometric path from the origin to a site in $\{-\Za,\Za\}$ constructed by chronologically removing all loops in $y$ which start and end at any site belonging to $\{0\}\cup \mathcal{K}_t$ up until the first visit of $\{-\Za,\Za\}$, as well as removing any part of the path after the final visit of $y$ to $\{-\Za,\Za\}$. 
\smallskip

We can now partition $\mathcal{P}^t$ into equivalence classes by saying that paths $y$ and $\hat y$ are in the same class if and only if $\text{skel}(y)=\text{skel}(\hat y)$. We write $\mathfrak{P}^t$ for the set of all such equivalence classes.
Note that any such equivalence class 
$\mathcal{P}\in\mathfrak{P}^t$ contains the \emph{null path}, $y_{\mathrm{null}}^\mathcal{P}\in\mathcal{P}^t$, defined as $y_{\mathrm{null}}^\mathcal{P}=\mathrm{skel}(y_{\mathrm{null}}^\mathcal{P})$. Observe that every null path, prior to visiting $\{-\Za,\Za\}$ for the first time, either (i) visits each site in $\{0\} \cup \mathcal{K}_t^+$ exactly once, or (ii) visits each site in $\{0\} \cup \mathcal{K}_t^-$ exactly once. In particular, until the first visit of $\{-\Za,\Za\}$ each null path visits either only positive integers, or only negative integers. 
\smallskip

Denote by $\mathcal{P}^t_{1}$ the subset of $\mathcal{P}^t$ consisting of all paths $y$ having at most $\lfloor w_t\rfloor$ extra visits to the set  
$\mathcal{K}_t$ compared to $\text{skel}(y)$, where 
\begin{align*}
w_t=\left\{\begin{array}{ll}
g_t^3/f_t^2 & \text{ in the critical regime,}\\
0 & \text{ otherwise.}
\end{array}\right.
\end{align*}
Let $\Lambda_t= 9et /\xi(\Za)$, and denote by $\mathcal{P}^t_{2}$ the subset of $\mathcal{P}^t$ consisting of all paths $y$ having at most $\Lambda_t$ returns to the set $\{-\Za,\Za\}$, and making no loops from the set $\{-\Za,\Za\}$ of length more than $2\alpha$.\smallskip

The main result of this section is the following pair of lemmas, which together establish the negligibility of paths not in $ \mathcal{P}^t_{0} =\mathcal{P}^t_{1}\cap \mathcal{P}^t_{2}$. 

\begin{lemma}
\label{L:nullpaths}
Almost surely, as $t \to \infty$,
\begin{align*}
U_0(t)=(1+o(1))\sum_{y\in \mathcal{P}^t_{1}}U(t,y)
\end{align*}
on the event $\mathcal{E}_t$.
\end{lemma}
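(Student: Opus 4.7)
The plan is to establish the lemma as follows. Since $U_0(t) = \sum_{y \in \mathcal{P}^t} U(t,y)$ by definition, it suffices to show
\[ B(t) := \sum_{y \in \mathcal{P}^t \setminus \mathcal{P}^t_1} U(t,y) = o\bigl(U_0(t)\bigr) \quad \text{almost surely on } \mathcal{E}_t. \]
The strategy will follow the argument for \cite[Lem.~4.4]{MPS}, with the difference that we now permit up to $\lfloor w_t \rfloor$ extra visits to $\mathcal{K}_t$ before declaring a path ``bad''. The crucial input is the spectral gap guaranteed by $\mathcal{E}_t^1$: for every $|z| \in [0,R_t]\setminus\{\Za\}$ we have $\xi(\Za) - \xi(z) > a_t f_t$.

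I would expand each $U(t,y)$ using the Feynman--Kac representation, group paths by their skeleton and by the multiset of extra excursions (each excursion being a loop starting and ending at some $z \in \{0\} \cup \mathcal{K}_t$), and sum the resulting geometric-type series. For a fixed skeleton and fixed placement of $k$ extra excursions, summing over the possible \emph{lengths} of the excursions produces a product of Green-function-type factors, each bounded essentially by $C \xi(z)/(\xi(\Za)-\xi(z))$ at the relevant base site $z$. On $\mathcal{E}_t$ this in turn is at most $C\zeta_t/(a_t f_t)$, with $\zeta_t$ as in Lemma~\ref{l:zeta}. The number of placements of $k$ extra excursions along a skeleton of length at most $R_t$ is polynomial in $R_t$, $|\mathcal{K}_t|$, and $t$. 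Comparing to the lower bound for $U_0(t)$ obtained by restricting to the null paths in each equivalence class (along the lines of the argument for~\eqref{e:u1}), the $t$-polynomial factors cancel, and one is reduced to showing that $\bigl(C' \zeta_t/(a_t f_t)\bigr)^{\lfloor w_t\rfloor + 1}$, multiplied by a polynomial placement correction in $R_t$ and $|\mathcal{K}_t|$, tends to zero.

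In the non-critical regimes $w_t = 0$, so a single excess excursion suffices: $\zeta_t/(a_t f_t) \to 0$ by Lemma~\ref{l:zeta} together with~\eqref{fg1}, and the polynomial placement factor poses no problem. In the critical regime $w_t = g_t^3/f_t^2 \to \infty$, and by Lemma~\ref{l:zeta} the per-excursion ratio decays only polynomially in $a_t$ (with an extra $\lambda(t)^{-1/2}$ factor when $\alpha=2$), so I would need to verify that $w_t \log\bigl(\zeta_t/(a_t f_t)\bigr) \to -\infty$ at a rate sufficient to beat the polynomial placement factor $(R_t|\mathcal{K}_t|)^{w_t}$ (controlled via Corollary~\ref{c:kk}). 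This is where the delicate calibration $w_t = g_t^3/f_t^2$ enters and constitutes the main technical obstacle: the threshold must be large enough to provide enough suppression to beat the combinatorial factor arising from the many non-duplicated sites available as loop-bases, yet small enough that $\mathcal{P}^t_1$ still captures the bulk of $U_0(t)$ that we need for the representation~\eqref{e:pq} in Section~\ref{sec:iso}.
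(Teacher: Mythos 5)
Your overall architecture (group paths by skeleton, bound each extra excursion via the spectral gap from $\mathcal{E}^1_t$, count placements, compare with the null path of each class) is the same as the paper's, but two of your quantitative inputs are incorrect, and with them the argument does not close. First, the per-excursion bound: on $\mathcal{E}_t$ the correct estimate, read off directly from the time-integral representation, is $U(t,y)\le U(t,y_{\mathrm{null}}^{\mathcal{P}})\prod_{j=1}^{2w}(\xi(\Za)-c_j)^{-1}<U(t,y_{\mathrm{null}}^{\mathcal{P}})(a_tf_t)^{-2w}$, i.e.\ each extra lattice point contributes a factor at most $(a_tf_t)^{-1}$, with no numerator. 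Your factor $C\zeta_t/(a_tf_t)$ per excursion is both unjustified and too weak: Lemma~\ref{l:zeta} is stated only in the critical regime and is a distributional statement not encoded in $\mathcal{E}_t$, whereas the lemma to be proved is an almost sure statement on $\mathcal{E}_t$ (on that event one only knows $\zeta_t<\xi(\Za)-a_tf_t$, so $\zeta_t/(a_tf_t)$ may be as large as $g_t/f_t$). Moreover, already in the supercritical regime, where $w_t=0$ and a single excess excursion must be negligible after multiplying by the entropy $|\mathcal{K}_t|\le g_t\eta(r_t)\theta_t^{-\alpha}$, the product $|\mathcal{K}_t|\,\zeta_t/(a_tf_t)$ need not tend to zero (take, e.g., $\alpha=3$ and $\eta(n)\sim n^{9/10}$), while $|\mathcal{K}_t|(a_tf_t)^{-2}\to0$ does hold thanks to the choice of $\theta_t$ --- this is exactly the computation the paper performs.

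Second, the combinatorics in the critical regime. Extra excursions are based only at sites of $\{0\}\cup\mathcal{K}_t$ (that is how the skeleton is defined), so the factor $R_t$ should not appear in the placement count; more importantly, the count of ways to attach $w$ excursions is $\binom{w+|\mathcal{K}_t|}{w}\le (2|\mathcal{K}_t|)^w/w!$, and the factor $1/w!$ is indispensable. In the critical regime $|\mathcal{K}_t|(a_tf_t)^{-2}$ is of order $f_t^{-2}\to\infty$, so no criterion of the form ``(per-excursion factor times placement factor) raised to the power $w_t$ tends to zero'' --- which is what your requirement that $w_t\log\bigl(\zeta_t/(a_tf_t)\bigr)\to-\infty$ fast enough to beat $(R_t|\mathcal{K}_t|)^{w_t}$ amounts to --- can possibly hold; with your numbers the net factor per excursion is of order $a_t^{1+2/\alpha}R_tg_t/f_t\to\infty$. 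The paper instead bounds the tail $\sum_{w>w_t}\frac{1}{w!}\bigl(8g_t\eta(r_t)\theta_t^{-\alpha}a_t^{-2}f_t^{-2}\bigr)^{w}\le\sum_{w>w_t}\frac{1}{w!}(g_t^2/f_t^2)^{w}$ by an exponential Chebyshev (Poisson tail) estimate, which is small precisely because $w_t=g_t^3/f_t^2$ exceeds the ``mean'' $g_t^2/f_t^2$ by the diverging factor $g_t$; the range $w\ge|\mathcal{K}_t|$ is handled separately using $\binom{w+|\mathcal{K}_t|}{w}\le 2^{w+|\mathcal{K}_t|}$. Without the factorial, and with your weaker analytic factor, the series you would need to control does not tend to zero, so the proposal has a genuine gap in both the supercritical and critical regimes.
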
  

\begin{lemma} 
\label{l:null0}
Almost surely, as $t \to \infty$,
\[ U_0(t)=(1+o(1))\sum_{y\in \mathcal{P}^t_{2}}U(t,y) \]
on the event $\mathcal{E}_t$.
\end{lemma}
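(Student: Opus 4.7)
The plan is to establish the lemma by showing $\sum_{y \in \mathcal{P}^t \setminus \mathcal{P}^t_2} U(t,y) = o(U_0(t))$ on $\mathcal{E}_t$, which is equivalent to the stated identity since $U_0(t) = \sum_{y \in \mathcal{P}^t} U(t,y)$. Interpreting $\mathcal{P}^t_2$ as the set of paths having at most $\Lambda_t$ returns to $\{-\Za,\Za\}$ and containing no loop from $\{-\Za,\Za\}$ of length exceeding $2\alpha$, its complement splits into two classes: (A) paths with more than $\Lambda_t = 9et/\xi(\Za)$ returns to $\{-\Za,\Za\}$; and (B) paths containing at least one loop from $\{-\Za,\Za\}$ of length greater than $2\alpha$. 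I would bound the contribution of each class separately using the Feynman-Kac representation together with the quantitative gap estimates in $\mathcal{E}_t$.

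For (A), I would follow the classical excursion analysis used in \cite{MPS}. The idea is to decompose each path at its successive visits to $\{-\Za,\Za\}$ and integrate out the holding times in the Feynman-Kac expression using the gap $\xi(\Za) - \xi(z) > a_t f_t$ for $|z| \in [0, R_t] \setminus \{\Za\}$ provided by $\mathcal{E}^1_t$. This shows each excursion from $\{-\Za,\Za\}$ back to $\{-\Za,\Za\}$ contributes at most a factor of order $Ct/\xi(\Za)$, so that paths with exactly $k$ returns contribute in total at most $e^{t\xi(\Za)}(Ct/\xi(\Za))^k/k!$. The factor $9e$ in the definition of $\Lambda_t$ is chosen precisely so that Stirling's approximation forces the tail $\sum_{k > \Lambda_t}$ to decay geometrically with ratio strictly less than one, which is negligible compared to the lower bound $U_0(t) \ge u(t,\Za) \ge e^{t\Psi_t(\Za)}/\mathrm{poly}(t)$ coming from the direct null path.

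For (B), the key input is $\mathcal{E}^2_t$: every site $z$ with $0 < |z - \Za| \le \alpha$ satisfies $2\xi(z) < \xi(\Za)$ and lies in $D$, and the symmetric statement holds around $-\Za$. A loop from $\Za$ of length $L > 2\alpha$ must therefore visit some site at distance exceeding $\alpha$ from $\Za$, where by $\mathcal{E}^1_t$ the potential is at most $\xi(\Za) - a_t f_t$. Integrating out the holding times in the Feynman-Kac expression for such a loop produces a factor bounded by $\prod_i 1/(\xi(\Za)-\xi(y_i))$ over the intermediate sites; combined with the combinatorial count of at most $2^L$ geometric loops of length $L$ from $\Za$, and using $\xi(\Za) \sim a_t \to \infty$, the sum over $L > 2\alpha$ forms a geometrically convergent tail whose total contribution is negligible relative to $U_0(t)$.

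The main obstacle I anticipate is the bookkeeping needed to combine these two estimates compatibly, particularly because the baseline class $\mathcal{P}^t_1$ already permits up to $w_t$ extra visits to $\mathcal{K}_t$ in the critical regime. Thus the excursion/loop decomposition must be carried out in a way that is uniform over the possible structure of visits to $\mathcal{K}_t$ within each excursion. A cleaner route, which I would attempt first, is to prove (A) and (B) as direct upper bounds on $\sum_{y \in \mathcal{P}^t \setminus \mathcal{P}^t_2} U(t,y)$, comparing against $U_0(t)$ itself rather than against the refined sum $\sum_{y \in \mathcal{P}^t_1} U(t,y)$; this should allow the argument from \cite{MPS} to be adapted essentially verbatim and sidestep the compatibility issue with Lemma~\ref{L:nullpaths}.
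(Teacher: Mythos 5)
There is a genuine gap, and it sits exactly in the step you call the ``cleaner route''. Your excursion bound for class (A) is valid but far too lossy for an absolute comparison: the estimate $e^{t\xi(\Za)}(Ct/\xi(\Za))^k/k!$ throws away the travel cost $\prod_i(\xi(\Za)-\xi(y_i))^{-1}$ of getting from the origin to $\{-\Za,\Za\}$, a factor of size $\exp\{-\Za\log\xi(\Za)(1+o(1))\}=\exp\{-\Theta(t a_t)\}$ on $\mathcal{E}_t$ (recall $r_t\log t= t a_t$, so $\Za\log\xi(\Za)$ is comparable to $t\,\xi(\Za)$ itself). The gain from truncating at $\Lambda_t=9et/\xi(\Za)$ returns is only the Poisson-tail factor $\sum_{k>\Lambda_t}(Ct/\xi(\Za))^k/k!\le\exp\{-c\,t/\xi(\Za)\}=\exp\{-\Theta(t/a_t)\}$, which is subexponential in $t$. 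Hence your upper bound for the discarded paths, of order $e^{t\xi(\Za)}e^{-\Theta(t/a_t)}$, exceeds your lower bound $U_0(t)\ge e^{t\Psi_t(\Za)-2t}=e^{t\xi(\Za)-\Za\log\xi(\Za)-2t}$ by a factor $\exp\{\Theta(t a_t)\}$, and no conclusion follows; even if the travel cost were kept on the upper-bound side, the remaining slack of order $e^{2t}$ (and the path-dependent $(1+o(1))$'s in the exponent) already swamps a gain of size $e^{-\Theta(t/a_t)}$. This is why the argument must be \emph{relative}: as in the paper's proof (following \cite{MPS}, Lemmas 3.9 and 4.5), one groups paths with many returns into equivalence classes sharing the same loop-erased path $y$, so that the common product $\prod_i(\xi(\Za)-\xi(y_i))^{-1}$ and the factor $e^{-2t}$ cancel exactly, leaving the class contribution bounded by $U(t,y)\sum_{m\ge\Lambda_t}\frac{1}{m!}\big(8t/\xi(\Za)\big)^m=o(1)\,U(t,y)$ uniformly; summing over classes, whose representatives lie in $\mathcal{P}^t_2$, gives the lemma. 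Your compatibility worry with $\mathcal{P}^t_1$ is unfounded -- Lemma~\ref{l:null0} is proved independently of Lemma~\ref{L:nullpaths}, both relative to $U_0(t)$ -- so there was no need to abandon the within-class comparison in favour of the absolute one.

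Two further points concern class (B) and the interplay with (A). First, the reduction to loops of length at most $2\alpha$ has to be done \emph{before} the return-count estimate, because the clean per-site factor $2/\xi(\Za)$ used there relies on $\mathcal{E}^2_t$, i.e.\ on the loops staying within distance $\alpha$ of $\pm\Za$ where $2\xi(z)<\xi(\Za)$. Second, your sketch of (B) omits the factor of order $t$ contributed by the extra return to $\pm\Za$ (the holding time there is not suppressed by any gap); the threshold $2\alpha$ is chosen precisely so that $t\,(a_tf_t)^{-(L-1)}\to 0$ for loops of length $L>2\alpha$, using $a_t^{\alpha-1}=t/\log t$. Without that factor the ``geometrically convergent tail'' observation does not by itself yield negligibility, and here too the estimate must be made relative to the same path with the loop removed rather than against a crude lower bound on $U_0(t)$.
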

 
\begin{proof}[Proof of Lemma \ref{L:nullpaths}] 
Given an equivalence class $\mathcal{P}\in\mathfrak{P}^t$,
we write 
$\mathcal{P}_w$ for the subset of $\mathcal{P}$ consisting of the paths with additional length $2w$ before the first visit to $\{-\Za,\Za\}$, compared to $y_{\mathrm{null}}^\mathcal{P}$. It suffices to show that 
\begin{align*}
\sum_{w> w_t}\sum_{ y\in\mathcal{P}_{w}}U(t,y) 
&=o(1) U(t,y_{\mathrm{null}}^\mathcal{P})
\end{align*}
uniformly for all $\mathcal{P}\in\mathfrak{P}^t$ on the event $\mathcal{E}_t$, as $t\to\infty$.
\smallskip

For each $w\in \mathbb{N}_0$ we have  
\begin{align*}
|\mathcal{P}_{w}|\le 2^{2w} {w+|\mathcal{K}_t|\choose w} ,
\end{align*}
since half of the additional $2w$ pieces (all pointing in one direction) are chosen according to the number of weak compositions of $w$ into at most $|\mathcal{K}_t|+1$ parts, while the remaining half (all pointing in the other direction) have to be assigned to the sites in $\{0\}\cup\mathcal{K}_t$  in a unique way to form loops. \smallskip
 
For each $y\in\mathcal{P}_{w}$, we have on $\mathcal{ E}_t$, similarly to~\cite[Lem.\ 4.4]{MPS},  
\begin{align*}
U(t,y)\le U(t,y_{\mathrm{null}}^\mathcal{P})\prod_{j=1}^{2w}
\frac{1}{\xi(\Za)-c_{j}}<U(t,y_{\mathrm{null}}^\mathcal{P})(a_tf_t)^{-2w},
\end{align*}
where $c_1,\dots,c_{2w}$ are the values of $\xi$ in the additional points of $y$ compared to $y_{\mathrm{null}}^\mathcal{P}$. Hence 
\begin{align}
\label{qq3}
\sum_{w> w_t}\sum_{ y\in\mathcal{P}_{w}}U(t,y) 
&< U(t,y_{\mathrm{null}}^\mathcal{P})\sum_{w> w_t} 
{w+|\mathcal{K}_t|\choose w}\Big(\frac{2}{a_tf_t}\Big)^{2w}
\end{align}
on $\mathcal{E}_t$. Observe that ${n \choose k}\le 2^n$ for all $0\le k\le n$, which implies
\begin{align}
\label{qq4}
\sum_{w\ge |\mathcal{K}_t|} 
{w+|\mathcal{K}_t|\choose w}\Big(\frac{2}{a_tf_t}\Big)^{2w}\le 
\sum_{w\ge |\mathcal{K}_t|} 
2^{w+|\mathcal{K}_t|}\Big(\frac{2}{a_tf_t}\Big)^{2w}
\sim \Big(\frac{4}{a_tf_t}\Big)^{2|\mathcal{K}_t|}\to 0.
\end{align}
Further, on $\mathcal{E}_t$
\begin{align}
\label{qq5}
\sum_{w_t< w<|\mathcal{K}_t|} 
{w+|\mathcal{K}_t|\choose w}\Big(\frac{2}{a_tf_t}\Big)^{2w}\le \sum_{w_t< w<|\mathcal{K}_t|} 
\frac{1}{w!}\Big(\frac{8|\mathcal{K}_t|}{a^2_tf^2_t}\Big)^{w}
< \sum_{w> w_t} 
\frac{1}{w!}\Big(\frac{8g_t\eta(r_t)}{\theta_t^{\alpha}a^2_tf^2_t}\Big)^{w}.
\end{align}
It remains to prove that \eqref{qq5} converges to $0$, since then \eqref{qq3}-\eqref{qq5} yields the result. \smallskip

We first analyse \eqref{qq5} in the non-critical regimes. Observe that by~\eqref{fg2c} and since $\eta(r_t) \ll r_t$ and $r_t=a_t^{\alpha}$ we have
\begin{align*}
\frac{g_t\eta(r_t)}{\theta_t^{\alpha}a_t^2f_t^2}
=\begin{cases}
\frac{g_t}{f_t^2}\cdot
\frac{\eta(r_t)}{r_t^{2/\alpha}}
&\to 0\quad
\text{ if }\eta(n)\ll \kappa(n),\\
\frac{g_t}{f_t^{2+\alpha}}
\Big[\frac{\eta(r_t)}{r_t^{2/\alpha}}\Big]^{-\frac{2}{\alpha-2}}&\to 0\quad
\text{ if }\eta(n)\gg \kappa(n)\text{ and }\alpha>2,\\
\frac{g_t}{f_t^2}\cdot\frac{\eta(r_t)}{r_t}\exp\left(-\log r_t+\frac{2r_t}{\eta(r_t)f_t}\right)&\to0\quad\text{if }\eta(n)\gg\kappa(n)\text{ and }\alpha=2,
\end{cases}
\end{align*}
which implies that 
\begin{align*}
\sum_{w=1}^{\infty} 
{w+|\mathcal{K}_t|\choose w}\Big(\frac{2}{a_tf_t}\Big)^{2w}\to 0.
\end{align*}
\smallskip 

We now consider the critical regime. Since $\eta(n)\sim \beta \kappa(n)$, eventually 
\begin{align*}
\frac{8g_t\eta(r_t)}{\theta_t^{\alpha}a_t^2f_t^2} = \frac{8g_t\eta(r_t)}{f_t^2 r_t^{2/\alpha}} < \frac{g_t^2}{f_t^2}.
\end{align*}
We use concentration of Poisson random variables to analyse \eqref{qq5}. Denote by $W_t$ a Poisson random variable with mean $g_t^2/f_t^{2}$, and let $\mathbf{P}$ and $\mathbf{E}$ denote its probability and expectation. Then for any $\theta>0$ by the exponential Chebychev inequality
\begin{align*}
\sum_{w\ge w_t} 
\frac{1}{w!}\Big(\frac{8g_t\eta(r_t)}{\theta_t^{\alpha}a^2_tf^2_t}\Big)^{w}
& <  \sum_{w\ge w_t}
\frac{1}{w!}\Big(\frac{g_t^2}{f_t^2}\Big)^{w}
=\exp\Big\{\frac{g_t^2}{f_t^2}\Big\} \mathbf{P}(W_t\ge w_t)\\
&\le \exp\Big\{\frac{g_t^2}{f_t^2}-\theta w_t\Big\}
\mathbf{E} e^{\theta W_t}
= \exp\Big\{\frac{g_t^2}{f_t^2}e^{\theta}-\theta w_t\Big\} .
\end{align*}  
Optimising over $\theta$, we use the minimiser 
\[ \theta=\log\frac{w_tf_t^2}{g_t^2}=\log g_t \]
 and get 
\begin{align*}
\sum_{w\ge w_t} 
\frac{1}{w!}\Big(\frac{8g_t\eta(r_t)}{\theta_t^{\alpha}a^2_tf^2_t}\Big)^{w}
&\le \exp\Big\{\frac{g_t^3}{f_t^2}-\frac{g_t^3}{f_t^2}\log g_t\Big\}\to 0
\end{align*} 
as required.
\end{proof}

\begin{proof}[Proof of Lemma \ref{l:null0}] 
It can be proved in the same way as~\cite[Lem.\ 4.5]{MPS} that paths which contain a loop from the set $\{-\Za,\Za\}$ that is longer than $2\alpha$ make a negligible contribution to $U_0(t)$. Let us show that the contributions from paths with more than $\Lambda_t$ returns to the set $\{-\Za,\Za\}$ is also negligible. 
\smallskip

We may assume that the length of any such loop is no more than $2\alpha$. We shall split paths from~$\mathcal{P}^t$ into equivalence classes by saying that two paths are equivalent if and only if they are the same after removing all loops from $\{-\Za,\Za\}$. For every path $y\in \mathcal{P}^t$ visiting $\{-\Za,\Za\}$ exactly once, write $\mathcal{P}^y_{\ell,m}$ for the class of paths equivalent to $y$ with additional length $\ell$ compared to $y$ and which visit $\{-\Za,\Za\}$ $m$ additional times.
\smallskip

By~\cite[Lem.\ 3.9]{MPS}, for any path $\hat y\in \mathcal{P}^y_{\ell,m}$ we have  on $\mathcal{E}_t$
\begin{align*}
U(t,\hat y)\le U(t,y)\frac{t^m}{m!}\prod_{j=1}^{\ell-m}\frac{1}{\xi(\Za)-c_j} < U(t,y)\frac{t^m}{m!}\Big(\frac{2}{\xi(\Za)}\Big)^{\ell-m},
\end{align*}
where $c_1,\dots,c_{\ell-m}$ are the extra values of $\hat y$ taken outside the set $\{-\Za,\Za\}$.
Since the length of the loops does not exceed $2\alpha$, we have used $c_i<\xi(\Za)/2$ for all $i$ on $\mathcal{E}_t$. Using the bound $|\mathcal{P}^y_{\ell,m}|\le 2^\ell$, we have 
\begin{align*}
\sum_{m\ge \Lambda_t}\sum_{\ell\ge 2m}
\sum_{\hat y\in \mathcal{P}^y_{\ell,m}}
U(t,\hat y)
& < U(t,y) \sum_{m\ge \Lambda_t}\sum_{\ell\ge 2m}
2^\ell\frac{t^m}{m!}\Big(\frac{2}{\xi(\Za)}\Big)^{\ell-m}
\sim U(t,y) \sum_{m\ge \Lambda_t}\frac{1}{m!}\Big(\frac{8t}{\xi(\Za)}\Big)^{m},
\end{align*}
and it suffices to show that the sum on the right-hand side tends to zero. As in the proof of Lemma~\ref{L:nullpaths}, we use concentration of Poisson random variables. Let $W_t$ be a Poisson random variable with mean $8t/\xi(\Za)$ otherwise independent of the $\sigma$-algebra generated by $D$ and $\xi$, and let $\mathbf{P}$ and $\mathbf{E}$ denote its probability and expectation. Then for any $\theta>0$ by the exponential Chebychev inequality
\begin{align*}
\sum_{m\ge \Lambda_t}\frac{1}{m!}\Big(\frac{8t}{\xi(\Za)}\Big)^{m}
&=\exp\Big\{\frac{8t}{\xi(\Za)}\Big\} \mathbf{P}(W_t\ge \Lambda_t)\\
&\le \exp\Big\{\frac{8t}{\xi(\Za)}-\theta\Lambda_t\Big\}\mathbf{E} e^{\theta W_t}
=\exp\Big\{\frac{8t}{\xi(\Za)}e^{\theta}-\theta\Lambda_t
\Big\}.
\end{align*}
Optimising over $\theta$, we use the minimiser 
\[ \theta=\log\frac{\Lambda_t \xi(\Za)}{8t}
=\log \frac{9e}{8}>1\]
 and get 
\begin{align*}
\sum_{m\ge \Lambda_t}\frac{1}{m!}\Big(\frac{4t}{f_ta_t}\Big)^{m}\le \exp \Big\{\frac{9et}{\xi(\Za)}\Big[1-\log\frac{9e}{8}\Big]\Big\}
\le  \exp \Big\{\frac{9et}{a_tg_t}\Big[1-\log\frac{9e}{8}\Big]\Big\} \to 0
\end{align*}
since $t / (a_tg_t) \to \infty$ for $\alpha\ge 2$.
\end{proof}


\bigskip
\section{Representing the ratio as a sum over non-duplicated sites}
\label{sec:iso}

In this section we represent the ratio $u(t, \Za)/u(t, \Zb)$ as a sum over the non-duplicated sites $\mathcal{K}_t$, and in particular complete the proof of Proposition \ref{p:pq}. The main preliminary result we need is the following. 

\begin{prop} 
\label{p:ext}
For every $y\in\mathcal{P}^t_{0}$, there exists an $\mathcal{F}_t$-measurable random variable 
$\Xi_t(y)$ such that the random variable $\Upsilon_t(y)$ defined by 
\begin{align}
\label{p:pq2}
U(t,y)
&=\Xi_t(y)\cdot \Upsilon_t(y)\cdot
\!\!\prod_{i:y_i\in\mathcal{K}_t}\Big(1-\frac{\xi(y_i)}{\xi(\Za)}\Big)^{-1}
\end{align}
converges, as $t \to \infty$, to one almost surely on $\mathcal{E}_t$ in the non-critical regimes and in probability in the critical regime. Moreover, this convergence holds uniformly in $y\in\mathcal{P}^t_{0}$.
\smallskip

Furthermore, for all $y\in\mathcal{P}^t_{0}$,
\begin{align*}
\Xi_t(y)=\Xi_t(-y)
\end{align*} 
in the subcritical and critical regimes.
\end{prop}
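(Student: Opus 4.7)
The plan is to compute $U(t,y)$ explicitly via the Laplace transform of the Feynman--Kac expectation and then isolate the $\mathcal{F}_t$-measurable factors from those depending on $\mathcal{K}_t$-potentials. Integrating out the i.i.d.\ Exp(2) waiting times of $(X_s)$ along $y$ gives
\begin{align*}
\int_0^\infty e^{-\lambda t}U(t,y)\,dt=\prod_{i=0}^{\ell(y)}\frac{1}{\lambda+2-\xi(y_i)},
\end{align*}
and inversion via the dominant residue at the pole $\lambda=\xi(\Za)-2$ -- isolated from all other poles by the gap $\xi(\Za)-\xi(z)>a_tf_t$ on $\mathcal{E}^1_t$ -- produces
\begin{align*}
U(t,y)=e^{t(\xi(\Za)-2)}\,C_t(y)\!\!\prod_{z\in\text{supp}(y)\setminus\{\pm\Za\}}\!\!\frac{1}{(\xi(\Za)-\xi(z))^{n_z(y)}}\,(1+o(1)),
\end{align*}
where $n_z(y)$ is the number of visits $y$ makes to $z$ and $C_t(y)$ is a combinatorial prefactor depending only on $t$, $\ell(y)$ and $n_{\pm\Za}(y)$ (arising from the higher-order residue at the multiplicity-$n_{\Za}(y)+n_{-\Za}(y)$ pole). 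The error is uniformly $o(1)$ in $y\in\mathcal{P}^t_0$ on $\mathcal{E}_t$ by the gap, the length bound $\ell(y)\le R_t$, and the control on $|\mathcal{K}_t|$ from $\mathcal{E}^2_t$.

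To match the factorisation \eqref{p:pq2}, split each factor $(\xi(\Za)-\xi(z))^{-1}$ for $z\in\mathcal{K}_t$ as $\xi(\Za)^{-1}(1-\xi(z)/\xi(\Za))^{-1}$, so that each path position $y_i\in\mathcal{K}_t$ contributes exactly one factor to the product in \eqref{p:pq2}, and define $\Xi_t(y)$ to comprise everything else: the exponential $e^{t(\xi(\Za)-2)}$, the combinatorial prefactor $C_t(y)$, the accumulated $\xi(\Za)^{-1}$'s, and the products $(\xi(\Za)-\xi(z))^{-n_z(y)}$ over sites $z\in\text{supp}(y)\setminus(\mathcal{K}_t\cup\{\pm\Za\})$. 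This is $\mathcal{F}_t$-measurable because $\Za\in D$ forces $\xi(\Za)$ to be known, $\mathcal{K}_t\in\mathcal{F}_t$, and the potentials $\{\xi(z):z\notin\mathcal{K}_t\}$ generate $\mathcal{F}_t$ by construction. For the symmetry, note that in the subcritical and critical regimes $\theta_t=1$, so $\mathcal{K}_t$ contains every non-duplicated site in $[-\Za,\Za]\setminus\{0\}$; hence every $z$ appearing in the defining product of $\Xi_t(y)$ lies in $D$ (or equals the origin), and reflection $y\mapsto-y$ relabels $z\leftrightarrow-z$ while preserving both $\xi(z)$ and all combinatorial quantities (using $\xi(\Za)=\xi(-\Za)$), giving $\Xi_t(y)=\Xi_t(-y)$.

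The principal technical obstacle is securing the uniform convergence $\Upsilon_t(y)\to 1$ in the critical regime. Here $\mathcal{P}^t_1$ permits up to $w_t\to\infty$ extra $\mathcal{K}_t$-visits, so the leading residue at $\xi(\Za)-2$ alone is insufficient and one must quantify the aggregate contribution from subleading poles at $\lambda=\xi(z)-2$ for $z\in\mathcal{K}_t$. The relevant aggregate quantities are the sums $\sum_{z\in\mathcal{K}_t^\pm}(\xi(\Za)-\xi(z))^{-k}$ for $k=1,2$, which are precisely $M_t^\pm$ and $(\Sigma_t^\pm)^2$; the sharp bounds on these in $\mathcal{E}_t^{cr}$ are what is needed to guarantee the aggregate subleading error remains $o(1)$ uniformly, and indeed this is the reason these particular quantities were introduced into $\mathcal{E}_t^{cr}$.
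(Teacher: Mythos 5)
Your Laplace-transform set-up is fine as a starting point (it is equivalent to the simplex-integral representation \eqref{uy1} the paper works from), but the central claim -- that inversion at the top pole yields $U(t,y)=e^{t(\xi(\Za)-2)}\,C_t(y)\prod_z(\xi(\Za)-\xi(z))^{-n_z(y)}(1+o(1))$ uniformly on $\mathcal{E}_t$ with $C_t(y)$ depending only on $t$, $\ell(y)$ and $n_{\pm\Za}(y)$ -- is exactly what fails in the critical regime, and your closing paragraph mis-states the fix. The pole at $\lambda=\xi(\Za)-2$ has order $m(y)+1$ with $m(y)$ as large as $\Lambda_t=9et/\xi(\Za)$, and the terms of its residue in which derivatives fall on the factors indexed by $\mathcal{K}_t$ are not negligible: resummed, they amount to replacing $t$ by roughly $t-M_t^+$ inside $t^{m(y)}/m(y)!$, i.e.\ a multiplicative correction of order $(1-M_t^+/t)^{m(y)}$ smoothed on the scale $\Sigma_t^+$. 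In the critical regime (say $\alpha>2$) one has $|\mathcal{K}_t^+|=N(\Za)\asymp a_t^2$, hence $M_t^+\asymp a_t\asymp\xi(\Za)$, so for paths with $m(y)\asymp\Lambda_t$ -- and these carry the bulk of the mass -- this correction is a constant bounded away from one, not $1+o(1)$; moreover it depends on $\{\xi(z):z\in\mathcal{K}_t\}$ through $M_t^{\pm}$ and $\Sigma_t^{\pm}$, so it can neither be absorbed into your $\mathcal{F}_t$-measurable $C_t(y)$ nor discarded. The role of $\mathcal{E}_t^{cr}$ is not to make this ``aggregate subleading error'' $o(1)$; it is to allow the order-one correction to be rewritten, up to a uniform $1+o(1)$, in terms of the $\mathcal{F}_t$-measurable surrogates $\bar M_t$, $\bar S_t$. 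This is precisely why the paper's $\Xi_t(y)$ in the critical regime carries the extra factor $\Phi\big(\bar S_t^{-1}[t-\sum_i x_i-\bar M_t]\big)$ inside the (kept, not expanded) integral, why a conditional CLT for the $\mathcal{K}_t^+$-occupation time is needed (Lemma~\ref{l:normal}), and why the lower bound $\Delta_{t,2}$, essentially $(1-2\bar M_t/t)^{\Lambda_t}$, has to be shown bounded away from zero via Proposition~\ref{LB1} and Lemma~\ref{l:etacr}. The genuinely delicate point of the critical regime is this interaction between the $\asymp a_t^2$ mandatory $\mathcal{K}_t$-visits and the up to $\Lambda_t$ returns to $\pm\Za$; the $w_t$ extra visits you single out are a minor correction, disposed of by $W_t(y)\le w_t^2/(a_tf_t)\to0$.

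Two further points. First, even in the non-critical regimes your uniform error control is not justified as stated: isolating the top residue leaves either a sum of residues at $\lambda=\xi(z)-2$, whose individual terms contain reciprocals of gaps $\xi(z)-\xi(z')$ between non-extremal potential values that are not bounded below on $\mathcal{E}_t$ (only gaps to $\xi(\Za)$ are), or a contour remainder that still has to be beaten against the main term; and the subleading terms of the higher-order residue again involve the $\mathcal{K}_t$-potentials, contradicting the claimed form of $C_t(y)$. The paper sidesteps all of this by never expanding the non-$\mathcal{K}_t$ part: after rescaling only the $\mathcal{K}_t^+$-variables, the remaining simplex integral is $\mathcal{F}_t$-measurable and is simply declared part of $\Xi_t(y)$, the analytic work being to show the occupation-time factor is $1+o(1)$ (via the time cut $T_t$ and Lemma~\ref{l:negl}) or, in the critical regime, matches its $\mathcal{F}_t$-measurable surrogate. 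Second, your symmetry argument is slightly too quick: paths in $\mathcal{P}_0^t$ may overshoot $\pm\Za$, so ``every $z$ in the defining product lies in $D$'' is not automatic; one needs the restrictions in $\mathcal{P}_2^t$ (loops from $\pm\Za$ of length at most $2\alpha$) together with the property $[\Za-\alpha,\Za+\alpha]\cap\N\subset D$ from $\mathcal{E}_t^2$, which is precisely why that property was put there.
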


The proof of Proposition \ref{p:ext} is rather technical, especially in the critical regime, and takes up the majority of the section. The proof is subject to three auxiliary lemmas, whose statements follow the main proof. In the critical regime, we shall also need to assume fine control over the second-order contributions to the product in \eqref{p:pq2}; we introduce the relevant event now.
\smallskip

 For each $t>0$, define first and second moment functions
\begin{align}
\label{e:m}
M_t^+=\sum_{z\in \mathcal{K}_t^+}\frac{1}{\xi(\Za)-\xi(z)}
\qquad\text{and}\qquad
M_t^-=\sum_{z\in \mathcal{K}_t^-}\frac{1}{\xi(\Za)-\xi(z)} ,
\end{align}
and
\begin{align}
\label{e:sig}
(\Sigma^+_t)^2=\sum_{z\in \mathcal{K}_t^+}\frac{1}{(\xi(\Za)-\xi(z))^2}
\qquad\text{and}\qquad
(\Sigma^-_t)^2=\sum_{z\in \mathcal{K}_t^-}\frac{1}{(\xi(\Za)-\xi(z))^2}.
\end{align}
Recalling the counting function for non-duplicated sites 
\[ N(n)=\sum_{z=1}^{n}\one\{z\in E\} ,\]
 define also their approximations 
\begin{align*}
\bar{M}_t=\frac{N(\Za)}{\xi(\Za)}\Big(1+ \frac{\gamma}{\xi(\Za)}\Big)
\qquad\text{and}\qquad
\frac{1}{\bar{S}_t}=\frac{\xi(\Za)}{N(\Za)^{1/2}}\Big(1
-\frac{\gamma}{\xi(\Za)}\Big),
\end{align*}
where $\gamma = \mathrm{E} \xi(0)= \alpha /(\alpha-1)$. 
\smallskip
Let $\lambda:\mathbb{R}_+\to\mathbb{R}$ by defined as
\[ \lambda(t)=\begin{cases}1&\mbox{if }\alpha>2,\\\log t&\mbox{if }\alpha=2. \end{cases} \]
The control over second-order contributions that we need in the critical regime is summarised by 
\begin{align*}
\mathcal{E}_t^{cr}
= \Big\{& \frac{\lambda(r_t)}{a_t} \cdot \bar{M}_t<g_t,  
\, a_t\big|M_t^+ - \bar{M}_t\big|<g_t, \, a_t\big|M_t^- - \bar{M}_t\big|<g_t,\\
& \phantom{aaaa} f_t< \frac{1}{\sqrt{\lambda(r_t)} \bar{S}_t}<g_t,  \, a_t^2\Big|\frac{1}{\Sigma_t^+}-\frac{1}{\bar{S}_t}\Big|
<g_t, \, a_t^2\Big|\frac{1}{\Sigma_t^-}-\frac{1}{\bar{S}_t}\Big|
<g_t\Big\}.
\end{align*} 

This event holds eventually with overwhelming probability:
\begin{prop}
\label{p:e2}
$\Prob(\mathcal{E}_t)\to 1$ as $t\to\infty$.
\end{prop}

We defer the proof of Proposition \ref{p:e2} to Section \ref{sec:typ}, where we also prove that the typical properties contained in $\mathcal{E}_t$ hold eventually with overwhelming probability.

\begin{proof}[Proof of Proposition \ref{p:ext}]
Write $\mathcal{P}^t_{+}$, $\mathcal{P}^t_{-}$ for the set of paths in $\mathcal{P}^t_{0}$ ending in $\Za$ and $-\Za$, respectively. We shall concentrate on defining $\Xi_t(y)$ for $y \in \mathcal{P}^t_{+}$; this will depend on $\xi$ only through the values of $\xi$ at sites not in $\mathcal{K}_t^+$ and the quantity~$|\mathcal{K}_t^+|$. For $y\in \mathcal{P}^t_-$ we define $\Xi(y)$ analogously, just replacing $|\mathcal{K}_t^+|$ by $|\mathcal{K}_t^-|$. Since $|\mathcal{K}_t^+|=|\mathcal{K}_t^-|$ in the subcritical and critical regimes (both quantities are the number of non-duplicate sites between $-\Za$ and $\Za$), this establishes $\Xi_t(y)=\Xi_t(-y)$ in the subcritical and critical regimes.
\smallskip

We begin by giving a useful representation for $U(t, y)$. For $t>0$ and $n\in \N$, denote by 
\begin{align*}
S^n_t=\Big\{(x_0,\dots,x_{n-1})\in \R_+^n: 
\sum_{i=0}^{n-1}x_i<t\Big\}
\end{align*}
the $n$-dimensional simplex of size $t$. First recall that, for any path $y \in \mathcal{P}_\text{all}$, by direct computation (see~\cite[Eq.\ (2.2)]{MPS}) we have 
\begin{align}
U(t,y)
&=e^{-2t} I_{\ell(y)}(t; \xi(y_0),\dots,\xi(y_{\ell(y)})) ,
\label{uy1}
\end{align}  
where the function $I$ is defined by
\begin{align*}
I_n(t; c_0,\dots,c_n)=e^{tc_n}\int_{S^n_t}\exp\Big\{ \sum_{i=0}^{n-1}x_i (c_i-c_n)\Big\}  dx_0\cdots dx_{n-1},
\end{align*}
for each $t>0$, $n\in\N$, and $c_0,\dots,c_n\in\R$. In particular, $I_0(t; c_0)=e^{tc_0}$.
\smallskip

Let $y\in \mathcal{P}^t_{+}$ and denote by $k(y)$ the number of visits of $y$ to $\mathcal{K}_t^+$ and by $m(y)$ the number of visits of $y$ to $\Za$ minus one. Let $n(y)=\ell(y)-k(y)-m(y)$. Further denote 
\begin{align*}
\mathfrak{I}^t_0=\big\{0\le i\le \ell(y): y_i\in\mathcal{K}_t^+\big\} \, , \quad \mathfrak{I}^t_1=\big\{0\le i\le \ell(y): y_i\not\in\mathcal{K}_t^+\big\}
\end{align*}
and
\[  \mathfrak{I}^t_2=\big\{0\le i\le \ell(y): y_i\not\in(\mathcal{K}_t^+\cup \{\Za\})\big\}. \]
Using~\eqref{uy1} and rescaling by $\xi(\Za)-\xi(y_i)$ the $k(y)$ variables $x_i\in \mathfrak{I}^t_0$, which is possible on $\mathcal{E}_t$, we obtain
\begin{align*}
U(t,y)
=e^{t\xi(\Za)-2t}\prod_{i\in\mathfrak{I}^t_0}\frac{1}{\xi(\Za)-\xi(y_i)}
&\int\limits_{S_t^{\ell(y)-k(y)}}
\exp\Big\{\sum_{i\in\mathfrak{I}^t_1}x_i\big(\xi(y_i)-\xi(\Za)\big)\Big\}\notag\\
&\times J_{k(y)} \Big(t-\sum_{i\in\mathfrak{I}^t_1} x_i;\,  \xi({\bf y}\cap \mathfrak{I}^t_0), \xi(\Za)\Big)
d({\bf x}\cap \mathfrak{I}^t_1),
\end{align*}
where, for each $s>0$, 
\begin{align*}
J_{k(y)}\big(s; \xi({\bf y}\cap \mathfrak{I}^t_0), \xi(\Za)\big)
&=\int_{\R^k_+}\exp\Big\{-\sum_{i\in\mathfrak{I}^t_0} x_i\Big\}\one\Big\{\sum_{i\in\mathfrak{I}^t_0}\frac{x_i}{\xi(\Za)-\xi(y_i)}<s\Big\}
d({\bf x}\cap  \mathfrak{I}^t_0), 
\end{align*}
and $\xi({\bf y}\cap \mathfrak{I}^t_0)$ is the vector of values $\xi(y_i), i\in\mathfrak{I}^t_0$, 
$d({\bf x}\cap  \mathfrak{I})$ is an abbreviation for the product of $dx_i$ for all $i\in\mathfrak{I}$, for any index set $\mathfrak{I}$. 
\smallskip

Let $\tau_z$, $z\in\N$, and $\hat\tau_i$, $i\in\N$ be independent exponentially distributed random variables with parameter one, also independent from $\xi$ and $D$, with $\mathbb{P}$ their probability law. 
Denote by $w(y)$ the number of extra visits of $y$ to the set $\mathcal{K}_t^+$  
and denote by $z_i(y)\in \mathcal{K}_t^+$ the point of the $i^{\rm{th}}$ extra revisit.
Recall  that $w(y)\le w_t$ since $y\in \mathcal{P}^t_{+}$.
We have, for each $s\in [0,t]$,
\begin{align*}
J_{k(y)}\big(s; \xi({\bf y}\cap \mathfrak{I}^t_0), \xi(\Za)\big)
&=\mathbb{P} \Big(\sum_{z\in \mathcal{K}_t^+}\frac{\tau_z}{\xi(\Za)-\xi(z)}+W_t(y)<s\Big)\notag\\
&=\mathbb{P} \Big(\frac{1}{\Sigma_t^+}\sum_{z\in \mathcal{K}_t^+}\frac{\tau_z-1}{\xi(\Za)-\xi(z)}<\frac{1}{\Sigma_t^+}
\big[s-W_t(y)-M_t^+\big]\Big)\notag\\
&= \Phi_t\Big(\frac{1}{\Sigma_t^+}
\big[s-W_t(y)-M_t^+\big]\Big),
\end{align*}
where $M_t^+$ and $\Sigma_t^+$ are as at \eqref{e:m} and \eqref{e:sig}, 
\begin{align*}
W_t(y)=\sum_{i=1}^{w(y)}\frac{\hat\tau_i}{\xi(\Za)-\xi(z_i(y))}
\end{align*}
and $\Phi_t$ denotes the distribution function (with respect to $(\tau_z)$ only, conditionally on $\xi$ and $D$) of the random variable 
\begin{align*}
\Gamma_t=\frac{1}{\Sigma_t^+}\sum_{z\in \mathcal{K}_t^+}\frac{\tau_z-1}{\xi(\Za)-\xi(z)}.
\end{align*}

Summarising the above discussion, we have shown that
\begin{align}
 \label{e:u}
 U(t,y)  & = e^{t\xi(\Za)-2t}\prod_{i\in\mathfrak{I}^t_0}\frac{1}{\xi(\Za)-\xi(y_i)} \\
 & \nonumber \times \int \limits_{S_t^{\ell(y)-k(y)}}
\exp \Big\{\sum_{i\in\mathfrak{I}^t_1}x_i\big(\xi(y_i)-\xi(\Za)\big)\Big\}  \Phi_t\Big(\frac{1}{\Sigma_t^+}
\big[ t-\sum_{i\in\mathfrak{I}^t_1} x_i  -W_t(y)-M_t^+\big]\Big)   d({\bf x}\cap \mathfrak{I}^t_1) .
\end{align}
This representation is useful because we show, in Lemma \ref{l:normal} below, that $\Phi_t \to \Phi$ almost surely on~$\mathcal{E}_t$, where $\Phi$ is the distribution function of a standard normal random variable. \smallskip

It remains to define $\Xi_t(y)$ and show that \eqref{p:pq2} holds; to do this, we split the analysis into the non-critical regimes and the critical regime.
\smallskip

\emph{Non-critical regimes.}
In these regimes we define
\begin{align*}
\Xi_t(y)=e^{t\xi(\Za)-2t}\xi(\Za)^{-|\mathcal{K}_t^+|}
\int\limits_{S_t^{\ell(y)-k(y)}} & \exp\Big\{\sum_{i\in\mathfrak{I}^t_1} x_i\big(\xi(y_i)-\xi(\Za)\big)\Big\}
d({\bf x}\cap\mathfrak{I}^t_1),
\end{align*}
which is clearly $\mathcal{F}_t$-measurable as all values $\xi(z), z\in\mathcal{K}_t$, have been removed. Using the representation~\eqref{e:u} and observing that $W_t(y)=0$ since $w_t=0$, it suffices to define a (possibly random) scale $T_t \in [0, t]$ and show, on $\mathcal{E}_t$, that: (i)
\begin{align*}
\Phi_t \left( \frac{1}{\Sigma_t^+}
\big[ s  - M_t^+\big] \right) \to 1
\end{align*}
uniformly for all $y\in\mathcal{P}^t_+$ and $s\in [T_t,t]$; and (ii) the contribution to
\begin{align}
 \label{e:neg}
 \int\limits_{S_t^{\ell(y)-k(y)} } \exp\Big\{\sum_{i\in\mathfrak{I}^t_1} x_i\big(\xi(y_i)-\xi(\Za)\big)\Big\} d({\bf x}\cap\mathfrak{I}^t_1),
 \end{align}
from the domain $S^{\ell(y)-k(y)}_{T_t,t}=S^{\ell(y)-k(y)}_{t}\backslash S^{\ell(y)-k(y)}_{t-T_t}$ is negligible.
\smallskip
 
For the case in which $\eta(n)$ converges, we have that $\mathcal{K}_t^+=E \cap [0, \Za]$ is almost surely bounded by Lemma~\ref{l:kk}. Hence on $\mathcal{E}_t$, estimating $f_ta_t<\xi(\Za)-\xi(z)<g_ta_t$, we have 
\begin{align*}
\Gamma_t <  \frac{g_t}{f_t|\mathcal{K}_t^+|^{1/2}}\sum_{z\in \mathcal{K}_t^+}|\tau_z-1|
=\frac{g_t}{f_t}\cdot\frac{1}{|E \cap [0, \Za]|}\sum_{z\in E \cap [0, \Za]}|\tau_z-1|
\end{align*}
eventually, where the sum on the right-hand side is finite. Hence, setting $T_t=g_t^2/f_t$, and using the fact that, on $\mathcal{E}_t$ as $t \to \infty$,
\begin{align*}
M_t^+<\frac{|\mathcal{K}_t^+|}{a_tf_t} = o(T_t) 
\qquad\text{and}\qquad
(\Sigma_t^+)^2 < \frac{|\mathcal{K}_t^+|}{a_t^2f_t^2} \to 0
\end{align*} 
we have 
\begin{align*}
\Phi_t \left( \frac{1}{\Sigma_t^+}
\big[ s  - M_t^+\big] \right) \to 1
\end{align*}
uniformly for all $y\in\mathcal{P}^t_+$ and $s\in [T_t,t]$. It remains to show that the contribution to \eqref{e:neg} from $S^{\ell(y)-k(y)}_{T_t,t}$ is negligible; this will be done together with the next case.
\smallskip

For the case in which $\eta(n)\to\infty$, we instead set $T_t=|\mathcal{K}_t^+|/(a_tf_t^3)$ and use the fact that by Lemma~\ref{l:normal} below $\Phi_t\to \Phi$ almost surely on~$\mathcal{E}_t$. Since $\Phi$ is continuous the convergence is uniform, and as $\Phi(x)\to 1$ as $x\to\infty$, it is then sufficient to show that 
$\big[s-M_t^+\big]/\Sigma_t^+\to\infty$ uniformly for all $s\in [T_t, t]$, as well as to show that the contribution to \eqref{e:neg} from $S^{\ell(y)-k(y)}_{T_t,t}$ is negligible.
\smallskip

Observe that on $\mathcal{E}_t$ we have 
\begin{align*}
M_t^+<\frac{|\mathcal{K}_t^+|}{a_tf_t}=o(T_t)
\qquad\text{and}\qquad
 (\Sigma_t^+)^2<\frac{|\mathcal{K}_t^+|}{a_t^2f_t^2}.
\end{align*} 
Hence on event $\mathcal{E}_t$, for all $s\ge T_t$ we have
\begin{align*}
\frac{1}{\Sigma_t^+}\big[s-M_t^+\big]
&>\frac{|\mathcal{K}_t^+|^{1/2}}{2f_t^2}(1+o(1))\to\infty 
\end{align*}
almost surely, where we also make use of \eqref{fgk}.
\smallskip

Now let us show that the contribution to \eqref{e:neg} from $S^{\ell(y)-k(y)}_{T_t,t}$ is negligible in both cases. Integrating~\eqref{e:neg} with respect to the variables $x_i$, $i\in\mathfrak{I}^t_1\backslash
\mathfrak{I}^t_2$, corresponding to the visits to $\Za$ we obtain 
\begin{align}
\int\limits_{S_t^{\ell(y)-k(y)}} &\exp\Big\{\sum_{i\in\mathfrak{I}^t_1} x_i\big(\xi(y_i)-\xi(\Za)\big)\Big\}
d({\bf x}\cap\mathfrak{I}^t_1)\notag\\
&=\frac{1}{m(y)!}\int\limits_{S_t^{n(y)}} \Big(t-\sum_{i\in\mathfrak{I}^t_2} x_i\Big)^{m(y)}\exp\Big\{\sum_{i\in\mathfrak{I}^t_2} x_i\big(\xi(y_i)-\xi(\Za)\big)\Big\}
d({\bf x}\cap\mathfrak{I}^t_2).
\label{cc9}
\end{align}
On the other hand, integrating~\eqref{e:neg} just over $S^{\ell(y)-k(y)}_{T_t, t}$ we obtain 
\begin{align*}
&\int\limits_{S_{T_t,t}^{\ell(y)-k(y)}}
\exp\Big\{\sum_{i\in\mathfrak{I}^t_1}x_i\big(\xi(y_i)-\xi(\Za)\big)\Big\}
d({\bf x}\cap\mathfrak{I}^t_1)\\
& = \frac{1}{m(y)!}  \! \!
\int\limits_{S_t^{n(y)}}  \! \! \Big[\Big(t-\sum_{i\in\mathfrak{I}^t_2} x_i\Big)^{m(y)}-
\Big(\big(t-T_t-\sum_{i\in\mathfrak{I}^t_2} x_i\big)\vee 0\Big)^{m(y)}  \Big]\exp\Big\{\sum_{i\in\mathfrak{I}^t_2} x_i\big(\xi(y_i)-\xi(\Za)\big)\Big\}
d({\bf x}\cap\mathfrak{I}^t_2).
\end{align*} 
We will split this integral in the sum of two, corresponding to the domain of integration being $S^{n(y)}_{tf_t}$
and its complement. 
Observe that $T_t=o(t)$ in all cases, which is obvious if $\eta$ converges and follows from 
\begin{align*}
T_t<\frac{g_t\eta(r_t)}{\theta_t^{\alpha}a_tf_t^3}
<\frac{g_t r_t}{a_tf_t^3}=o(t)
\end{align*}
otherwise. Recalling the definition $\Lambda_t= 9et /\xi(\Za)$ and that $y$ makes at most $\Lambda_t$ returns to $\Za$,  on the domain $S^{n(y)}_{tf_t}$ we can therefore use Bernoulli's inequality to obtain eventually
\begin{align*}
\Big(t-\sum_{i\in\mathfrak{I}^t_2} x_i\Big)^{m(y)}-
\Big(\big(t-T_t-\sum_{i\in\mathfrak{I}^t_2} x_i\big)\vee0\Big)^{m(y)} &= \Big(t-\sum_{i\in\mathfrak{I}^t_2} x_i\Big)^{m(y)}-
\Big(t-T_t-\sum_{i\in\mathfrak{I}^t_2} x_i\Big)^{m(y)} \\
& \le \Big(t-\sum_{i\in\mathfrak{I}^t_2} x_i\Big)^{m(y)}
\frac{T_t\Lambda_t}{t-tf_t}.
\end{align*}
Observe that by~\eqref{fg2c} and since $\eta(r_t) \ll r_t$ and $r_t=a_t^{\alpha}$ we have, on $\mathcal{E}_t$,
\begin{align*}
\frac{T_t\Lambda_t}{t-tf_t}<\frac{g_t^2 \eta(r_t)}{\theta_t^{\alpha}a_t^2f_t^4}
=\left\{\begin{array}{lll}
\displaystyle
\frac{g_t^2}{f_t^4}\cdot\frac{\eta(r_t)}{r_t^{2/\alpha}} & \to 0 & \text{ if }1\ll  \eta(n)\ll \kappa(n),\\
\displaystyle
\frac{g_t^2}{f_t^{4+\alpha}} \cdot\Big[\frac{r_t^{2/\alpha}}{\eta(r_t)}\Big]^{\frac{2}{\alpha-2}}&\to 0 & \text{ if }\eta(n)\gg \kappa(n) \ \text{and} \ \alpha > 2 , \\
\displaystyle
\frac{g_t^2}{f_t^{6}} \cdot  \frac{\eta(r_t)}{r_t} \exp \left( - \log r_t + \frac{2 r_t}{\eta(r_t) f_t}   \right) &\to 0 & \text{ if }\eta(n)\gg \kappa(n) \ \text{and} \ \alpha = 2 , \\
\end{array}
\right.
\end{align*}
and that this ratio also obviously converges to zero if $\eta(n)$ converges. This implies that in all non-critical regimes 
\begin{align*}
\int\limits_{S_{tf_t}^{n(y)}}& \Big[\Big(t-\sum_{i\in\mathfrak{I}^t_2} x_i\Big)^{m(y)}-
\Big(t-T_t-\sum_{i\in\mathfrak{I}^t_2} x_i\Big)^{m(y)}\Big]\exp\Big\{\sum_{i\in\mathfrak{I}^t_2} x_i\big(\xi(y_i)-\xi(\Za)\big)\Big\}
d({\bf x}\cap \mathfrak{I}^t_2)\\
&=o(1)\int\limits_{S_t^{n(y)}} \Big(t-\sum_{i\in\mathfrak{I}^t_2} x_i\Big)^{m(y)}\exp\Big\{\sum_{i\in\mathfrak{I}^t_2} x_i\big(\xi(y_i)-\xi(\Za)\big)\Big\}
d({\bf x}\cap \mathfrak{I}^t_2).
\end{align*}
Combining this with~\eqref{cc9}, it remains to show the the integral on the left-hand side of the above formula, taken over $S_{t}^{n(y)}\backslash S_{tf_t}^{n(y)}$, is also negligible with respect to the integral on the right-hand side. Estimating the expression in the square brackets by the  first term, we can easily see that this follows from Lemma~\ref{l:negl} below. 
\smallskip

\emph{Critical regime.} In the critical regime on $\mathcal{E}_t$ we have 
\begin{align}
\label{www}
0<W_t(y)\le \frac{1}{a_tf_t}\sum_{i=1}^{w(y)}\hat \tau_i
\le \frac{w_t}{a_tf_t}\hat\tau_{\lceil w_t \rceil}^{\ssup 1}
\le \frac{w_t^2}{a_tf_t}\to 0
\end{align}
uniformly in $y$ almost surely, where 
$\hat \tau_{\lceil w_t \rceil}^{\ssup 1}$ denotes the maximum of $\hat \tau_1, \dots, \hat\tau_{\lceil w_t \rceil}$, which is bounded eventually by $w_t$ almost surely. We henceforth assume that the event $\mathcal{E}_t^{cr}$ holds; this is valid by Proposition \ref{p:e2}. The laws of large numbers for $M_t^+$ and $\Sigma_t^+$ specified in the event $\mathcal{E}_t^{cr}$ together with Lemma~\ref{l:normal} below suggest that we should define 
\begin{align*}
\Xi_t(y)=e^{t\xi(\Za)-2t}\xi(\Za)^{-|\mathcal{K}_t^+|-w(y)}
\int\limits_{S_t^{\ell(y)-k(y)}} &
\exp\Big\{\sum_{i\in\mathfrak{I}^t_1} x_i\big(\xi(y_i)-\xi(\Za)\big)\Big\}\\
&\times \Phi\Big(\frac{1}{\bar{S}_t}
\Big[t-\sum_{i\in\mathfrak{I}^t_1} x_i- \bar{M}_t\Big]\Big)
d({\bf x}\cap \mathfrak{I}^t_1). 
\end{align*}
Again, it is easy to see that $\Xi_t(y)$ is $\mathcal{F}_t$-measurable. Using the representation in \eqref{e:u}, we therefore have that 
\begin{align*}
\Upsilon_t(y)=\frac{\Theta_{t,1}(y)}{\Theta_{t,2}(y)},
\end{align*}
where
\begin{align*}
\Theta_{t,1}(y)=\!\!\!\!\!\int\limits_{S_t^{\ell(y)-k(y)}} \!\!\! & 
\!\!\exp\Big\{\sum_{i\in\mathfrak{I}^t_1} x_i\big(\xi(y_i)-\xi(\Za)\big)\Big\}  \Phi_t\Big(\frac{1}{\Sigma_t^+}
\big[t-\sum_{i\in\mathfrak{I}^t_1} x_i-W_t(y)-M_t^+\big]\Big)
d({\bf x}\cap \mathfrak{I}^t_1) ,\\
\Theta_{t,2}(y)=\!\!\!\!\!\int\limits_{S_t^{\ell(y)-k(y)}}\!\!\!&  
\!\!\exp\Big\{\sum_{i\in\mathfrak{I}^t_1}  x_i\big(\xi(y_i)-\xi(\Za)\big)\Big\} \, \Phi\Big(\frac{1}{\bar{S}_t}\,
\Big[t-\sum_{i\in\mathfrak{I}^t_1} x_i- \bar{M}_t\Big]\Big)
d({\bf x}\cap \mathfrak{I}^t_1).
\end{align*}
Hence it suffices to show that, uniformly in $y\in \mathcal{P}^t_+$, 
\begin{align}
\label{Theta1}
|\Theta_{t,1}(y)-\Theta_{t,2}(y)|
&=\Delta_{t,1}\int\limits_{S_t^{\ell(y)-k(y)}}  
\exp\Big\{\sum_{i\in\mathfrak{I}^t_1} x_i\big(\xi(y_i)-\xi(\Za)\big)\Big\}
d({\bf x}\cap \mathfrak{I}^t_1),\\
\Theta_{t,2}(y)&\ge \Delta_{t,2} \int\limits_{S_t^{\ell(y)-k(y)}} 
\exp\Big\{\sum_{i\in\mathfrak{I}^t_1} x_i\big(\xi(y_i)-\xi(\Za)\big)\Big\}
d({\bf x}\cap \mathfrak{I}^t_1),
\label{Theta2}
\end{align}
where $\Delta_{t,1}$ converges to zero almost surely on $\mathcal{E}_t\cap\mathcal{E}_t^{cr}$,
and $\Delta_{t,2}$ is bounded away from zero in probability. These properties guarantee that \begin{align*}
\frac{\Delta_{t,1}}{\Delta_{t,2}}\to 0
\end{align*} 
in probability.
\smallskip 

For each $t>0$, $s\in [0,t]$, and $y\in \mathcal{P}^t_+$, denote  
\begin{align*}
X_t(s,y)=\frac{1}{\Sigma_t^+}\big[s-W_t(y)-M_t^+\big]
\qquad\text{and}\qquad
\hat X_t(s)=\frac{1}{\bar{S}_t}\Big[s- \bar{M}_t\Big].
\end{align*}
Observe that the properties specified in 
$\mathcal{E}_t^{cr}$ imply the additional properties 
\begin{align*}
M_t^+>g_ta_t
\qquad\text{and}\qquad f_t<\frac{1}{\Sigma_t^+}<g_t
\end{align*}
eventually. 
Using~\eqref{www} we have on $\mathcal{E}\cap \mathcal{E}_t^{cr}$ 
\begin{align}
|X_t(s,y)-\hat X_t(s)|
&\le s\Big|\frac{1}{\Sigma_t^+}-\frac{1}{\bar{S}_t}\Big|
+M_t \Big|\frac{1}{\Sigma_t^+}
-\frac{1}{\bar{S}_t}\big|
+\frac{1}{\Sigma_t^+}|M_t^+-\bar{M}_t\big|
+\frac{W_t(y)}{\Sigma_t^+}\notag\\
&\le \frac{s g_t}{a_t^2}+\frac{g_t^2}{a_t}+\frac{g_t\lambda(r_t)^{1/2}}{a_t}+\frac{g_tw_t^2}{f_ta_t}\to 0
\label{cc1}
\end{align}
uniformly for all $s\in [0,a_t^{3/2}]$. Further, using~\eqref{www} we have for all $s\in \big[a_t^{3/2},t\big]$ on
$\mathcal{E}\cap \mathcal{E}_t^{cr}$
\begin{align}
\label{cc2}
X_t(s,y)\ge f_t\Big[a_t^{3/2}-\frac{w_t^2}{a_tf_t}-g_ta_t\Big]\to\infty
\qquad\text{and}\qquad
\hat X_t(s)\ge f_t\Big[a_t^{3/2}-g_ta_t\Big]\to\infty,
\end{align}
with the  convergences being uniform in $y\in\mathcal{P}^t_+$.
By Lemma~\ref{l:normal}, $\Phi_t\to \Phi$, and the convergence is uniform since $\Phi$ is continuous. Using also 
that $\Phi$ is uniformly continuous we obtain from~\eqref{cc1} and \eqref{cc2} that 
\begin{align*}
\Delta_{t,1}=\max_{s\in [0,t]}\max_{y\in \mathcal{P}^t_+}\big|\Phi_t(X_t(s,y))-\Phi(\hat X_t(s))\big|\to 0
\end{align*}
almost surely on $\mathcal{E}_t\cap \mathcal{E}_t^{cr}$,
which implies~\eqref{Theta1}.
\smallskip

To prove~\eqref{Theta2}, we  first 
observe that by integrating with respect to $x_i$, $i\in \mathfrak{I}^t_1\backslash \mathfrak{I}^t_2$
corresponding to the extra visits to $\Za$, we have
\begin{align}
&\int\limits_{S_t^{\ell(y)-k(y)}} 
\exp\Big\{\sum_{i\in\mathfrak{I}^t_1} x_i\big(\xi(y_i)-\xi(\Za)\big)\Big\}
d({\bf x}\cap \mathfrak{I}^t_1)\notag\\
&=\frac{1}{m(y)!}\int\limits_{S_t^{n(y)}} 
\Big(t-\sum_{i\in\mathfrak{I}^t_2} x_i\Big)^{m(y)}
\exp\Big\{\sum_{i\in\mathfrak{I}^t_2} x_i\big(\xi(y_i)-\xi(\Za)\big)\Big\}
d({\bf x}\cap \mathfrak{I}^t_2)).
\label{cc7}
\end{align}
Restricting the integral in $\Theta_{t,2}(y)$ to the domain where the argument of $\Phi$ is
positive and, similarly, integrating with respect to $x_i$, $i\in \mathfrak{I}^t_1\backslash \mathfrak{I}^t_2$, we obtain   
\begin{align}
\Theta_{t,2}(y)
&\ge \frac 1 2 
 \int\limits_{S_{t-\bar{M}_t}^{\ell(y)-k(y)}} \exp\Big\{\sum_{i\in\mathfrak{I}^t_1} x_i\big(\xi(y_i)-\xi(\Za)\big)\Big\}
d({\bf x}\cap \mathfrak{I}^t_1)\notag\\
& =\frac{1}{2m(y)!} 
 \int\limits_{S_{t-\bar{M}_t}^{n(y)}} \Big(t-\bar{M}_t-\sum_{i\in\mathfrak{I}^t_2}x_i\Big)^{m(y)} \exp\Big\{\sum_{i\in\mathfrak{I}^t_2} x_i\big(\xi(y_i)-\xi(\Za)\big)\Big\}
d({\bf x}\cap \mathfrak{I}^t_2)).
\label{cc8}
\end{align}
Hence in order to prove~\eqref{Theta2} we need to show that the integral in~\eqref{cc7} is lower-bounded by the integral in~\eqref{cc8} multiplied by some $\Delta_{t,2}$ with the required properties. 
\smallskip

Observe that on $\mathcal{E}_t^{cr}$ we have $\bar{M}_t<g_ta_t=o(t)$
as $\alpha\ge2$,  which implies $t-\bar{M}_t>tf_t$. We will restrict the integral in~\eqref{cc8} to the even smaller domain
$S_{tf_t}^{n(y)}$, where we can estimate  
\begin{align*}
\Big(t-\bar{M}_t-\sum_{i\in\mathfrak{I}^t_2}x_i\Big)^{m(y)}
&= \Big(t-\sum_{i\in\mathfrak{I}^t_2}x_i\Big)^{m(y)}
\Big(1-\bar{M}_t\big/\big[t-
\sum_{i\in\mathfrak{I}^t_2} x_i\big]\Big)^{m(y)} \\
&\ge  \Big(t-\sum_{i\in\mathfrak{I}^t_2}x_i\Big)^{m(y)}\Big( 1-\frac{2\bar{M}_t}{t}\Big)^{\Lambda_t}.
\end{align*}
Since 
\begin{align*}
\Big( 1-\frac{2 \bar{M}_t}{t}\Big)^{\Lambda_t}=\exp\Big\{-\frac{18eN(\Za)}{\xi(\Za)^2}(1+o(1))\Big\}
\end{align*}
we obtain 
\begin{align*}
\Big(t- \bar{M}_t-\sum_{i\in\mathfrak{I}^t_2}x_i\Big)^{m(y)}
\ge 4\Delta_{t,2}\Big(t-\sum_{i\in\mathfrak{I}^t_2}x_i\Big)^{m(y)}
\end{align*}
where 
\begin{align*}
\Delta_{t,2}=\frac 1 4 \exp\Big\{-\frac{19eN(\Za)}{\xi(\Za)^2}\Big\}
=\frac 1 4 \exp\Big\{-19e\cdot \frac{N(\Za)}{a_t^2}\cdot\frac{a_t^2}{\xi(\Za)^2}\Big\}
\end{align*}
is bounded away from zero in probability by Proposition~\ref{LB1} and 
Lemma~\ref{l:etacr}.
This implies  
\begin{align}
\Theta_{t,2}(y)\ge \frac{2\Delta_{t,2}}{m(y)!} 
\int\limits_{S_{tf_t}^{n(y)}} \Big(t-
\sum_{i\in\mathfrak{I}^t_2} x_i\Big)^{m(y)} \exp\Big\{ \sum_{i\in\mathfrak{I}^t_2} x_i\big(\xi(y_i)-\xi(\Za)\big)\Big\}
d({\bf x}\cap \mathfrak{I}^t_2)).
\label{qqq}
\end{align}
It remains now to prove that this expression is of the same order as the expression on the left-hand side of~\eqref{cc7}. Comparing it with~\eqref{qqq}, it suffices to show that the integral over $S_{t}^{n(y)}\backslash S_{tf_t}^{n(y)}$ is negligible with respect to the integral over $S_{tf_t}^{n(y)}$ uniformly for all $y\in \mathcal{P}^t_+$, which follows from Lemma~\ref{l:negl} below.
\end{proof}

Before completing the proof of Proposition \ref{p:pq}, we establish the three lemmas that were used in the previous proof. Note that throughout we recall all notation used in that proof.

\begin{lemma} 
\label{l:normal}
Assume $\eta(n)\to\infty$. As $t \to \infty$, $\Phi_t\to \Phi$ almost surely on $\mathcal{E}_t$, where $\Phi$ is the distribution function of the standard normal distribution. 
\end{lemma}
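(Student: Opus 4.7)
The plan is to apply a Lyapunov central limit theorem to the conditionally independent sequence whose sum defines $\Gamma_t$. Conditionally on $\xi$ and $D$, the random variables
\[ X_{t,z} := \frac{\tau_z - 1}{\Sigma_t^+ \,(\xi(\Za) - \xi(z))}, \qquad z \in \mathcal{K}_t^+, \]
are independent and centred, and their variances sum to one by the very definition of $\Sigma_t^+$. Since $\Gamma_t = \sum_{z \in \mathcal{K}_t^+} X_{t,z}$ and the limit $\Phi$ is continuous, the claim $\Phi_t \to \Phi$ will follow pointwise from the Lyapunov CLT and then uniformly by P\'olya's theorem, as soon as we establish
\[ L_t := \sum_{z \in \mathcal{K}_t^+} \mathbb{E}\big[|X_{t,z}|^3\big] = \frac{c}{(\Sigma_t^+)^3} \sum_{z \in \mathcal{K}_t^+} \frac{1}{(\xi(\Za) - \xi(z))^3} \longrightarrow 0 \]
almost surely on $\mathcal{E}_t$, where $c := \mathbb{E}|\tau_1 - 1|^3 < \infty$.

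First I would exploit the two-sided control on the gaps $\xi(\Za) - \xi(z)$ available on $\mathcal{E}_t$. The inclusion $\mathcal{K}_t^+ \subset [1,\Za] \subset [0, R_t]$ combined with the defining properties of $\mathcal{E}_t^1$ yields $\xi(\Za) - \xi(z) > f_t a_t$, while the bound $\xi(\Za) < g_t a_t$ gives the trivial upper bound $\xi(\Za) - \xi(z) < g_t a_t$. These together produce
\[ \sum_{z \in \mathcal{K}_t^+} \frac{1}{(\xi(\Za) - \xi(z))^3} \le \frac{|\mathcal{K}_t^+|}{(a_t f_t)^3} \qquad \text{and} \qquad (\Sigma_t^+)^2 \ge \frac{|\mathcal{K}_t^+|}{(a_t g_t)^2}, \]
whence $L_t \le c' \, g_t^3 f_t^{-3} |\mathcal{K}_t^+|^{-1/2}$ for a universal constant $c'$.

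It then remains to show that $|\mathcal{K}_t^+|^{1/2} \gg g_t^3/f_t^3$ on $\mathcal{E}_t$. By Corollary~\ref{c:kk}, $|\mathcal{K}_t^+|$ is of order $\eta(r_t)/\theta_t^\alpha$ in probability. In the subcritical and critical regimes $\theta_t = 1$ and the desired estimate is immediate from assumption~\eqref{fg2a}, which forces any polynomial in $g_t$ and $1/f_t$ to be of smaller order than any positive power of $\eta(r_t)$. The main obstacle is the supercritical regime, where the explicit formulae give $\eta(r_t)/\theta_t^\alpha = f_t^{-\alpha}[r_t/\eta(r_t)]^{2/(\alpha-2)}$ when $\alpha>2$ and an analogous expression involving $\exp\!\big(2r_t/(\eta(r_t) f_t)\big)$ when $\alpha=2$; in each subcase the polynomial loss $g_t^3/f_t^3$ must be absorbed against the slow divergence $r_t/\eta(r_t) \to \infty$, and this is arranged precisely by the separation condition~\eqref{fg2b}.
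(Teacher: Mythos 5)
Your proposal is correct and follows essentially the same route as the paper: condition on the potential, use independence of the exponential waiting times, exploit the two-sided bounds $a_t f_t < \xi(\Za)-\xi(z) < a_t g_t$ valid on $\mathcal{E}_t$, and conclude via the growth of $|\mathcal{K}_t^+|$ guaranteed by \eqref{fg2a} in the subcritical and critical regimes and \eqref{fg2b} in the supercritical regime. The only (inessential) difference is that you verify Lyapunov's third-moment condition and invoke P\'olya's theorem, whereas the paper checks the Lindeberg condition directly and applies its Theorem~B1 of \cite{MPS}; since the $\tau_z$ are exponential, the two criteria are interchangeable here and lead to the same scale comparisons.
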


\begin{proof} 
Denote
\begin{align*}
V_t(z)=\frac{1}{\Sigma_t^+}\cdot\frac{\tau_z-1}{\xi(\Za)-\xi(z)}.
\end{align*}
Since $\mathrm{E}_{\mathcal{F}_t}V_t(z)=0$ and
\begin{align*}
\sum_{z\in \mathcal{K}_t^+} \mathrm{E}_{\mathcal{F}_t} V_t^2(z)=1,
\end{align*}
the statement will follow from~\cite[Thm.\ B1]{MPS} provided the Lindenberg condition is satisfied.  
Given $\e>0$, we have on $\mathcal{E}_t$, using $\Sigma_t^+\ge |\mathcal{K}_t^+|^{1/2}(a_tg_t)^{-1} $ 
\begin{align*}
\sum_{z\in \mathcal{K}_t^+}\mathrm{E}_{\mathcal{F}_t}
\big[V_t^2(z)\one\{|V_t(z)|\ge \e\}\big]
& \le \mathrm{E}_{\mathcal{F}_t}
\Big[(\tau_z-1)^2\one\Big\{|\tau_z-1|\ge \frac{\e  f_t |\mathcal{K}_t^+|^{1/2}}{g_t}\Big\}\Big]\to 0
\end{align*}
almost surely, where we also make use of \eqref{fgk}.
\end{proof}

\begin{lemma} As $t\to\infty$,
\label{l:negl}
\begin{align*}
&\int\limits_{S_{t f_t}^{n(y)}}
\Big(t-\sum_{i\in\mathfrak{I}^t_2}x_i\Big)^{m(y)}
\exp\Big\{\sum_{i\in\mathfrak{I}^t_2}x_i\big(\xi(y_i)-\xi(\Za)\big)\Big\}
d({\bf x}\cap \mathfrak{I}^t_2)\\
&\quad  \sim  \int\limits_{S_t^{n(y)}}
\Big(t-\sum_{i\in\mathfrak{I}^t_2}x_i\Big)^{m(y)}
\exp\Big\{\sum_{i\in\mathfrak{I}^t_2}x_i\big(\xi(y_i)-\xi(\Za)\big)\Big\}
d({\bf x}\cap \mathfrak{I}^t_2)
\end{align*}
uniformly for all $y\in\mathcal{P}^t_{0}$ almost surely on $\mathcal{E}_t$.
\end{lemma}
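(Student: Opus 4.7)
On $\mathcal{E}_t$, for every $y \in \mathcal{P}_0^t$ ending at $\Za$, paths of length at most $R_t = \Za(1+f_t)$ reaching $\Za$ make at most $(R_t - \Za)/2 = \Za f_t/2$ left-jumps, hence $y_i \ge -\Za f_t/2 > -\Za$; together with $y_i \neq \Za$ for $i \in \mathfrak{I}_2^t$, this forces $y_i \in [-R_t, R_t] \setminus \{\pm\Za\}$, and $\mathcal{E}_t^1$ then supplies the uniform bound
\[ a_t f_t \le c_i := \xi(\Za) - \xi(y_i) \le \xi(\Za) \le g_t a_t. \]
(The case ending at $-\Za$ is symmetric.) Writing $\sigma = \sum_{i \in \mathfrak{I}_2^t} x_i$, the plan is to bound the tail integral over $\{tf_t \le \sigma < t\}$ and compare it with a lower bound on the integral over $\{\sigma < tf_t\}$.

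\textbf{Tail and lower bound.} For the tail, splitting $\sum_i x_i c_i \ge a_t f_t \sigma$ into two equal halves and using $\sigma \ge tf_t$,
\[ \int_{S_t^{n(y)} \setminus S_{tf_t}^{n(y)}} (t-\sigma)^{m(y)} e^{-\sum_i x_i c_i}\, d(\mathbf{x} \cap \mathfrak{I}_2^t) \le t^{m(y)} e^{-a_t t f_t^2/2} \Big(\frac{2}{a_t f_t}\Big)^{n(y)}. \]
For the lower bound, let $(\tau_i)_{i \in \mathfrak{I}_2^t}$ be independent exponentials with rates $c_i$ under law $\mathbf{P}^*$; setting $s := 2\sum_i c_i^{-1} \le 2n(y)/(a_t f_t)$, Markov's inequality yields $\mathbf{P}^*(\sum_i \tau_i < s) \ge 1/2$, and $s < tf_t$ follows from $n(y) \le R_t \le 2 g_t a_t^\alpha$ together with $f_t^2 \log t \to \infty$, both consequences of \eqref{fg1}. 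Hence
\[ \int_{S_{tf_t}^{n(y)}} (t-\sigma)^{m(y)} e^{-\sum_i x_i c_i}\, d(\mathbf{x} \cap \mathfrak{I}_2^t) \ge \tfrac{1}{2} (t-s)^{m(y)} \prod_{i \in \mathfrak{I}_2^t} c_i^{-1}. \]

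\textbf{Combining and the main obstacle.} Using $c_i \le g_t a_t$, the ratio of the tail to the lower bound is at most $2(t/(t-s))^{m(y)} (2g_t/f_t)^{n(y)} \exp\{-a_t t f_t^2/2\}$. Taking logarithms and inserting the uniform estimates $n(y) \le R_t \le 2 g_t a_t^\alpha$, $m(y) \le \Lambda_t \le 9et/(a_t f_t)$, $s/t \le 4 g_t/(f_t \log t)$, and $\log(2g_t/f_t) \ll \log\log t$ (all from \eqref{fg1}), together with the identity $a_t t = a_t^\alpha \log t$ coming from $a_t = (t/\log t)^{1/(\alpha-1)}$, the three summands are of orders $g_t a_t^{\alpha-2}/f_t^2$, $g_t a_t^\alpha \log\log t$, and $-a_t^\alpha f_t^2 \log t/2$, respectively. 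Both positive terms are dominated by the exponential gain: for the second one needs $g_t \log\log t \ll f_t^2 \log t$, which follows from $g_t, 1/f_t \ll \log\log t$ together with $(\log\log t)^4 \ll \log t$; for the first one needs $g_t \ll a_t^2 f_t^4 \log t$, which follows similarly from $a_t f_t \to \infty$ and $f_t^2 \log t \to \infty$. The principal delicacy is that when $\alpha > 2$, $m(y)$ can grow as $t/a_t$, far exceeding $\log t$, so that the polynomial factor $(t-\sigma)^{m(y)}$ is extremely sensitive to $\sigma$; this forces us to exploit the full exponential concentration of $\sigma$ rather than a cruder variance or Markov estimate, and the constraint $1/f_t \ll \log\log t$ provided by \eqref{fg1} is exactly what gives the margin needed for the exponential gain to win.
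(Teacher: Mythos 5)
Your proof is correct and follows essentially the same strategy as the paper: bound the tail over $S_t^{n(y)}\setminus S_{tf_t}^{n(y)}$ from above using $\xi(\Za)-\xi(y_i)>a_tf_t$ on $\mathcal{E}_t$, bound the bulk from below on a much smaller region, and beat the polynomial error terms (controlled via $n(y)\le R_t$, $m(y)\le \Lambda_t$ and \eqref{fg1}) with the exponential gain of order $ta_tf_t^2$. The only difference is cosmetic: the paper lower-bounds the bulk by restricting to $S_{t/\log t}^{n(y)}$ and comparing simplex volumes, while you use the exact exponential integral $\prod_i c_i^{-1}$ together with a Markov bound at $s=2\sum_i c_i^{-1}$; both yield the same uniform conclusion.
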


\begin{proof}
First, estimating on $S_{t}^{n(y)}\backslash S_{tf_t}^{n(y)}$ we have 
\begin{align*}
\sum_{i\in\mathfrak{I}^t_2} x_i\big(\xi(y_i)-\xi(\Za)\big)
\le -a_tf_t \sum_{i\in\mathfrak{I}^t_2} x_i \le -t a_tf^2_t
\end{align*}
and so 
\begin{align*}
\int\limits_{S_{t}^{n(y)}\backslash S_{tf_t}^{n(y)}}\Big(t-
\sum_{i\in\mathfrak{I}^t_2}x_i\Big)^{m(y)} &\exp\Big\{ \sum_{i\in\mathfrak{I}^t_2} x_i\big(\xi(y_i)-\xi(\Za)\big)\Big\}
d({\bf x}\cap \mathfrak{I}^t_2)\\
&\le \exp\big\{-ta_tf_t^2\big\}t^{m(y)}\frac{t^{n(y)}}{n(y)!}=:\mathfrak{L}_t(y).
\end{align*}
Second, estimating on $S_{t/\log t}^{n(y)}$ we have 
\begin{align*}
\sum_{i\in\mathfrak{I}^t_2} x_i\big(\xi(y_i)-\xi(\Za)\big)
\ge -\xi(\Za) \sum_{i\in\mathfrak{I}^t_2} x_i \ge -\frac{t a_tg_t}{\log t}
\end{align*}
and so 
\begin{align*}
\int\limits_{S_{t/\log t}^{n(y)}}\!\!\! \!\Big(t-
\sum_{i\in\mathfrak{I}^t_2} x_i\Big)^{m(y)}& \exp\Big\{ \sum_{i\in\mathfrak{I}^t_2} x_i\big(\xi(y_i)-\xi(\Za)\big)\Big\}
d({\bf x}\cap \mathfrak{I}^t_1)\\
&\ge \exp\Big\{-\frac{ta_tg_t}{\log t}\Big\}\Big(t-\frac{t}{\log t}\Big)^{m(y)}\frac{1}{n(y)!}\Big(\frac{t}{\log t}\Big)^{n(y)}=:\mathfrak{U}_t(y).
\end{align*}
Finally, using $n(y)\le \ell(y)\le R_t\le 2r_tg_t$ and $m(y)\le \Lambda_t$ we get 
\begin{align*}
\frac{\mathfrak{L}_t(y)}{\mathfrak{U}_t(y)}
&=\exp\Big\{-ta_tf_t^2+\frac{ta_tg_t}{\log t}
-m(y)\log\Big(1-\frac{1}{\log t}\Big)+n(y)\log\log t\Big\}\\
&\le \exp\Big\{-ta_tf_t^2+\frac{ta_tg_t}{\log t}
-\Lambda_t\log\Big(1-\frac{1}{\log t}\Big)+2r_tg_t\log\log t\Big\}
\to 0
\end{align*}
uniformly in $y$ since the first term dominates the rest. 
\end{proof}

\begin{lemma} 
\label{l:etacr}
In the critical regime, as $t\to\infty$,
\begin{align*}
\frac{\lambda(r_t)N(\Za)}{a_t^2}\Rightarrow 
\beta (X^{\ssup 1})^{\frac{2}{\alpha}}.
\end{align*}
\end{lemma}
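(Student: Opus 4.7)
The plan is to factorise the expression as
\[
\frac{\lambda(r_t)N(\Za)}{a_t^2}
\;=\;\frac{N(\Za)}{\eta(\Za)}\,\cdot\,\frac{\eta(\Za)}{\kappa(\Za)}\,\cdot\,\frac{\lambda(r_t)\kappa(\Za)}{a_t^2},
\]
and to show that the first two factors tend to $1$ and $\beta$ in probability respectively, while the third converges weakly to $(X^{\ssup 1})^{2/\alpha}$. The result then follows from Slutsky's theorem.

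First I would note that in the critical regime $\eta(n)\sim\beta\kappa(n)\to\infty$, and by Lemma~\ref{l:0} we have $\Za\to\infty$ almost surely, so $\eta(\Za)\to\infty$ almost surely. Lemma~\ref{l:nn} (applied, strictly speaking, to the random index $\Za$, which is legitimate since $\Za$ is $\mathcal{G}$-measurable while $N$ depends only on the independent duplication Bernoullis) then gives $N(\Za)/\eta(\Za)\to 1$ in probability. Similarly $\eta(\Za)/\kappa(\Za)\to\beta$ almost surely, since $\Za\to\infty$ and $\eta(n)/\kappa(n)\to\beta$.

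The third factor is deterministic in $\Za$. For $\alpha>2$, using $r_t=a_t^\alpha$ (so $r_t^{2/\alpha}=a_t^2$) and $\lambda(r_t)=1$,
\[
\frac{\lambda(r_t)\kappa(\Za)}{a_t^2}\;=\;\frac{(\Za)^{2/\alpha}}{a_t^2}\;=\;\Big(\frac{\Za}{r_t}\Big)^{2/\alpha}\;\Rightarrow\;(X^{\ssup 1})^{2/\alpha}
\]
by Proposition~\ref{LB1} and the continuous mapping theorem. For $\alpha=2$, $a_t^2=r_t$ and $\lambda(r_t)=\log r_t$, so
\[
\frac{\lambda(r_t)\kappa(\Za)}{a_t^2}\;=\;\frac{\log r_t}{\log \Za}\cdot\frac{\Za}{r_t}.
\]
Writing $\log\Za=\log r_t+\log(\Za/r_t)$, and using that $\Za/r_t$ is bounded away from $0$ and $\infty$ in probability (Lemma~\ref{l:0}) while $\log r_t\to\infty$, the ratio $\log r_t/\log\Za$ tends to $1$ in probability. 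Hence the third factor converges weakly to $X^{\ssup 1}=(X^{\ssup 1})^{2/\alpha}$ since $2/\alpha=1$.

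The only mild obstacle is justifying the use of Lemma~\ref{l:nn} at the random argument $\Za$: because the duplication variables defining $N$ are independent of $\{\xi(z):z\in\N_0\}$, and $\Za\in\sigma(\{\xi(z):z\in\N_0\})$, conditioning on the latter $\sigma$-algebra reduces the statement to Lemma~\ref{l:nn} applied at the now-deterministic sequence $\Za\to\infty$, and the dominated convergence theorem removes the conditioning. Combining the three factors via Slutsky completes the proof.
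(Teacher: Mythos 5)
Your decomposition is exactly the one used in the paper: for $\alpha>2$ the three factors are literally $N(\Za)/\eta(\Za)$, $\eta(\Za)/(\Za)^{2/\alpha}$ and $(\Za/r_t)^{2/\alpha}$, and for $\alpha=2$ the same rearrangement of the logarithmic factors appears; the treatment of the second and third factors via Proposition~\ref{LB1}, continuous mapping, $\log r_t/\log\Za\to1$ and Slutsky is fine. The genuine flaw is in your justification of $N(\Za)/\eta(\Za)\to1$: you assert that $\Za\in\sigma(\{\xi(z):z\in\N_0\})$ and is therefore independent of the duplication variables defining $N$. This is false. $\Za$ is defined as a maximiser of $\Psi_t$ over $D$, and $D$ is determined by precisely those duplication variables; the dependence is not vacuous, since on the event $\{\Za=n\}$ every site $z$ with $\Psi_t(z)>\Psi_t(n)$ is forced to lie in $E$, so conditioning on $\Za$ biases the Bernoulli variables entering $N$. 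Consequently you cannot condition on $\sigma(\{\xi(z):z\in\N_0\})$, treat $\Za$ as deterministic, and let $N$ keep its unconditional law, as your argument requires.

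The gap is local and fixable with tools already in the paper: replace $\Za$ by $Z_t^{\ssup{1*}}$, the maximiser of $\Psi_t$ over $\N_0$, which \emph{is} measurable with respect to $\mathcal{G}=\sigma(\{\xi(z):z\in\N_0\})$ and independent of the duplication variables (this is exactly the device used in the proof of Lemma~\ref{l:gap}). Your conditioning-plus-dominated-convergence argument then legitimately gives $N(Z_t^{\ssup{1*}})/\eta(Z_t^{\ssup{1*}})\to1$ in probability, and since $\Prob\big(\Za\neq Z_t^{\ssup{1*}}\big)\to0$ by~\eqref{oo7} in Proposition~\ref{LB1}, the same convergence holds with $\Za$. (Alternatively, a sandwich argument using the monotonicity of $N$ and $\eta$, the regular variation of $\eta$ in the critical regime, and the tightness of $\Za/r_t$ away from zero and infinity also works.) With that repair your proof coincides with the paper's, which cites Lemma~\ref{l:nn} and Proposition~\ref{LB1} for this step without spelling out the justification.
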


\begin{proof} Since $r_t=a_t^{\alpha}$ we have for the case $\alpha>2$,
\begin{align*}
\frac{N(\Za)}{a_t^2}=\frac{N(\Za)}{\eta(\Za)}\cdot \frac{\eta(\Za)}{(\Za)^{2/\alpha}}\cdot \Big[\frac{\Za}{r_t}\Big]^{\frac{2}{\alpha}}
\Rightarrow \beta (X^{\ssup 1})^{\frac{2}{\alpha}}
\end{align*}
by Proposition~\ref{LB1} and Lemma~\ref{l:nn}. For the case $\alpha=2$, we instead have
\[ \frac{\lambda(r_t) N(\Za)}{a_t^2}=\frac{N(\Za)}{\eta(\Za)}\cdot \frac{\eta(\Za)}{\Za/\log \Za}\cdot \frac{\Za\log r_t}{r_t\log\Za}\Rightarrow \beta X^{\ssup 1}, \]
again by Proposition~\ref{LB1} and Lemma~\ref{l:nn}. 
\end{proof}

We conclude the section by completing the proof of Proposition \ref{p:pq}, which follows easily from Proposition~\ref{p:ext} above.

\subsection{Proof of Proposition \ref{p:pq}}
Let $\Xi_t(y)$ be defined as in Proposition \ref{p:ext}, and denote
\begin{align}
\label{pp}
P_t= - \log \sum_{y\in \mathcal{P}^t_+}\Xi_t(y) + \log \sum_{y\in \mathcal{P}^t_-}\Xi_t(y) .
\end{align}
Note that this immediately gives that $P_t=0$ in the subcritical and critical regimes since $\Xi_t$ is then symmetric by Proposition~\ref{p:ext}.
\smallskip

For each path $y\in\mathcal{P}^t_{0}$, denote by $\mathfrak{I}^t(y)$ the set of all indices $i$ such that 
$y_i\in \mathcal{K}_t$ and $y_i$ is not the first visit to this particular point of $\mathcal{K}_t$. 
Combining Lemmas \ref{L:nullpaths} and \ref{l:null0} and Proposition~\ref{p:ext} we obtain
\begin{align*}
u(t,\Za)=(1+o(1))
\prod_{z\in\mathcal{K}_t^+}\Big(1-\frac{\xi(z)}{\xi(\Za)}\Big)^{-1}
\sum_{y\in \mathcal{P}^t_+}\Xi_t(y)\prod_{i\in \mathfrak{I}^t(y)}\Big(1-\frac{\xi(y_i)}{\xi(\Za)}\Big)^{-1}.
\end{align*}
where the $o(1)$ term tends to zero almost surely on the event $\mathcal{E}_t$ in the non-critical regimes, and in probability in the critical regime.
\smallskip

In the non-critical regimes $w_t=0$ and $\mathfrak{I}^t(y)=\varnothing$. Hence the second product equals one, and the sum is $\mathcal{F}_t$-measurable. Since the logarithm of the first product is by definition $Q_t^+$, and using the analogous argument for $u(t,-\Za)$, we obtain the result in the non-critical regimes.
\smallskip

In the critical regime we have, using Lemma~\ref{l:zeta} (and recalling the definition of $\zeta_t$ from that lemma), 
\begin{align*}
0<\log \prod_{i\in \mathfrak{I}^t(y)}\Big(1-\frac{\xi(y_i)}{\xi(\Za)}\Big)^{-1}
&\le-w_t\log\Big(1-\frac{\zeta_t}{\xi(\Za)}\Big)=(1+o(1))\frac{w_t}{f_t}a_t^{2/\alpha-1}\lambda(t)^{-1/2}\to 0
\end{align*}
in probability, which implies the result. 
\bigskip


\section{Typical properties}
\label{sec:typ}

We next establish that the events $\mathcal{E}_t$ and $\mathcal{E}_t^\text{cr}$ hold eventually with overwhelming probability, and in particular to complete the proof of Propositions~\ref{p:e} and \ref{p:e2}. Most of the properties in $\mathcal{E}_t$ and $\mathcal{E}_t^\text{cr}$ follow automatically from the point process machinery we developed in Section \ref{sec:pp}, although some need to established by more direct methods; we do this in the following three lemmas.

\begin{lemma}
\label{l:gap}
As $t\to\infty$, 
\begin{align*}
\Prob\big(\big[\Za-\alpha,\Za+\alpha\big]\cap \N\subset D\big)\to 1.
\end{align*}
\end{lemma}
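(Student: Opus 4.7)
\emph{Plan.} The strategy is to show that, with overwhelming probability, the maximiser $\Za$ avoids the ``bad'' set $\mathcal{B}_t := \{n \in D : [n-\alpha,n+\alpha] \cap \N \not\subset D\}$, consisting of duplicated sites whose integer neighbourhood is not fully duplicated. The proof proceeds by a first-moment argument on this set, exploiting the independence between the duplication indicators and the underlying Pareto potential values.

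First, I would reduce to a favourable event. By Proposition~\ref{LB1} and Lemma~\ref{l:os}, for any $\e > 0$ one can choose $\delta, c > 0$ so that
\[
\Prob\big(\Za/r_t \in [\delta,\delta^{-1}],\ \Psi_t(\Za)/a_t > c\big) > 1 - \e
\]
for all $t$ large. On this favourable event, $\Za \in \mathcal{B}_t$ forces the existence of some $z \in [\delta r_t, \delta^{-1} r_t] \cap \N$ with $z \in \mathcal{B}_t$ and $\Psi_t(z)/a_t > c$, so by Markov's inequality it suffices to bound the expected number of such sites.

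Next I would carry out the first-moment computation. The key structural input is that the duplication indicators $(\one\{n \in D\})_{n \in \N}$ are independent Bernoulli$(p(n))$ random variables, jointly independent of the Pareto field $\xi_0$, and that $\xi(z) = \xi_0(z)$ for every $z \ge 0$. This independence allows the expectation to factorise:
\[
\mathrm{E}\big|\{ z \in [\delta r_t,\delta^{-1}r_t] \cap \N : z \in \mathcal{B}_t,\ \Psi_t(z)/a_t > c\}\big|
= \sum_{z} \Prob(z \in \mathcal{B}_t) \cdot \Prob(\Psi_t(z)/a_t > c).
\]
A union bound gives $\Prob(z \in \mathcal{B}_t) \le \sum_{1 \le |k| \le \alpha} q(z+k)$, and the eventual monotonicity of $q$ (inherited from the assumed eventual monotonicity of $p$) bounds this by $2\alpha\, q(\delta r_t - \alpha) \to 0$, uniformly for $z \in [\delta r_t, \delta^{-1} r_t]$. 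A standard Pareto tail estimate, essentially contained in the derivation of the intensity measure of $\Pi$ in the proof of Proposition~\ref{lppp}, shows that $\sum_{z \in [\delta r_t, \delta^{-1} r_t]} \Prob(\Psi_t(z)/a_t > c)$ remains of order one in $t$ (it converges to $\int_\delta^{\delta^{-1}} (\rho x + c)^{-\alpha}\,dx$). Hence the expected count is at most $O(q(\delta r_t - \alpha)) \to 0$, and sending $\e \to 0$ concludes the argument.

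The main subtlety is the careful exploitation of independence between the duplication indicators and the Pareto potential, which is what makes the first-moment bound factorise cleanly; without this independence one would have to contend with the fact that conditioning on $\Za = n$ can force neighbours of $n$ with larger unperturbed $\Psi_t$-values out of $D$. The assumption that $p$ is eventually increasing enters only in supplying the uniform decay of $\Prob(z \in \mathcal{B}_t)$ over the relevant range of $z$, and could be replaced by any similar regularity condition on $q$.
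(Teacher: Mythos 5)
Your argument is correct, but it takes a genuinely different route from the paper. The paper's proof sidesteps the danger of conditioning on $\Za$ (which depends on $D$) by first replacing $\Za$ with $Z_t^{\ssup{1*}}$, the maximiser of $\Psi_t$ over $\N_0$, using $\Prob(\Za = Z_t^{\ssup{1*}}) \to 1$ from Proposition~\ref{LB1}; since $Z_t^{\ssup{1*}}$ is measurable with respect to $\{\xi(z): z\in\N_0\}$ and hence independent of $D$, conditioning on that $\sigma$-algebra gives the bad probability as $1-\prod_{|z-Z_t^{\ssup{1*}}|<\alpha} p(z)$, which tends to zero almost surely because $Z_t^{\ssup{1*}}\to\infty$ and $p\to 1$, and dominated convergence finishes. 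You avoid the same conditioning pitfall by a first-moment bound over all sites in the window $[\delta r_t,\delta^{-1} r_t]$ with $\Psi_t$ above level $c a_t$, factorising each summand via independence of the duplication coins from $\xi_0$. This is valid: on the favourable event $\Za$ itself witnesses the bad configuration, the factorisation is legitimate (up to the null set where $\xi_0(-n)=\xi_0(n)$, the indicators $\one\{n\in D\}$ are the Bernoulli coins, independent of $\xi_0$), and the tail sum is indeed $O(1)$ — in fact the crude bound $\Prob(\Psi_t(z)>c a_t)\le\Prob(\xi(z)>c a_t)=c^{-\alpha}/r_t$ suffices, so you do not need the refined limit $\int_\delta^{\delta^{-1}}(\rho x+c)^{-\alpha}dx$ or any appeal to Proposition~\ref{lppp}. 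The trade-off: the paper's proof is softer and needs only $p(n)\to 1$ plus the identification $\Za=Z_t^{\ssup{1*}}$ with high probability, whereas yours needs only the tightness parts of Proposition~\ref{LB1} (not the identity \eqref{oo7}) but pays with a quantitative window-and-level argument. Two small tidy-ups: the citation of Lemma~\ref{l:os} is not needed for your reduction, and eventual monotonicity of $q$ is likewise unnecessary — $\sup_{m\ge \delta r_t-\alpha} q(m)\to 0$ follows from $q(n)\to 0$ alone, so your remark that the regularity assumption could be dropped here is in fact automatic.
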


\begin{proof} 
We have 
\begin{align*}
\Prob\big(\big[\Za-\alpha,\Za+\alpha\big]\cap \N\not\subset D\big)\le \Prob\big(\big[Z_t^{\ssup{1*}}-\alpha,Z_t^{\ssup{1*}}+\alpha\big]\cap \N\not\subset D\big)+\Prob\big(\Za\neq Z_t^{\ssup{1*}}\big).
\end{align*}
The second probability tends to zero by Proposition~\ref{LB1}. To show that the first probability also tends to zero, denote by 
$\mathcal{G}$ the $\sigma$-algebra generated by $\{\xi(z):z\in \N_0\}$, and denote the conditional probability with respect to $\mathcal{G}$ by $\Prob_{\mathcal{G}}$. Since $Z_t^{\ssup{1*}}$ is $\mathcal{G}$-measurable and independent of $D$, we have 
\begin{align*}
\Prob_{\mathcal{G}}\big(\big[Z_t^{\ssup{1*}}-\alpha,Z_t^{\ssup{1*}}+\alpha\big]\cap \N\not\subset D\big)=1-\!\!\!\!\!\!\!\!\prod_{|z-Z_t^{\ssup{1*}}|<\alpha}\!\!\!\!\!\!\!\!p(z)  \le 1- \max_{ |z-Z_t^{\ssup{1*}}|<\alpha } p(z)^{2\lfloor\alpha\rfloor+1}\to 0
\end{align*}
almost surely since $Z_t^{\ssup{1*}}\to \infty$ almost surely and $p \to 1$. Hence 
\begin{align*}
 \Prob\big(\big[Z_t^{\ssup{1*}}-\alpha,Z_t^{\ssup{1*}}+\alpha\big]\cap \N\not\subset D\big)\to 0
\end{align*}
by the dominated convergence theorem.
\end{proof}

\begin{lemma}
\label{l:nohigh}
As $t\to\infty$, 
\begin{align*}
\Prob\big(2\xi(z)<\xi(\Za) \text{ \rm for all }\, 0<|z-\Za|\le \alpha\big)\to 1. 
\end{align*}
\end{lemma}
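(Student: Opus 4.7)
The plan mirrors the approach of Lemma~\ref{l:gap}: use Proposition~\ref{LB1} to reduce to a high-probability event on which $\Za$ and $\xi(\Za)$ sit on their natural scales, then bound the bad event by a straightforward union bound exploiting the independence of $\xi$ at distinct positive sites.

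First, since by Proposition~\ref{LB1} the random variables $\Za/r_t$ and $\xi(\Za)/a_t$ converge in law to $X^{\ssup 1}, Y^{\ssup 1}>0$, for any $\e>0$ I would choose constants $0<c<M$ so that, for $t$ large enough,
\[ \Prob\bigl(\alpha+1\le \Za \le M r_t,\ \xi(\Za)\ge c a_t\bigr)>1-\e, \]
using also that $\Za\to\infty$ almost surely. On this event every integer $z$ with $0<|z-\Za|\le\alpha$ is a positive integer distinct from $\Za$, so if $2\xi(z)\ge \xi(\Za)$ for some such $z$ then there exist distinct positive integers $z_1\ne z_2$ in $[1,Mr_t+\alpha]\cap\N$ with $|z_1-z_2|\le\alpha$ and $\xi(z_1),\xi(z_2)\ge c a_t/2$ (one being $\Za$, the other being $z$).

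Since $\{\xi(n)\}_{n\in\N}$ are i.i.d.\ Pareto with tail exponent $\alpha$, the probability of this latter event is, by a union bound over ordered pairs, at most
\[ 2\lceil\alpha\rceil\,(Mr_t+\alpha)\,\Bigl(\tfrac{c a_t}{2}\Bigr)^{-2\alpha} \;=\; O\bigl(r_t\, a_t^{-2\alpha}\bigr) \;=\; O(a_t^{-\alpha}) \;\to\; 0, \]
where in the penultimate step I used $r_t=a_t^{\alpha}$. Combining this with the lower bound on the probability of the good event and letting $\e\downarrow 0$ completes the proof.

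The only subtlety, and the main (mild) obstacle to be aware of, is that one cannot try to condition directly on the identity of $\Za$, because $\Za$ depends measurably on the entire potential field and the sites $z_1,z_2$ in the bound above are in particular allowed to coincide with $\Za$. This is sidestepped by phrasing the union bound symmetrically over \emph{all} pairs of neighbouring high-potential sites in a window of length $O(r_t)$; this event dominates the event of interest regardless of which site in the pair happens to be $\Za$. Everything else reduces to the standard Pareto tail estimate.
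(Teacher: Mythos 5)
Your proof is correct, but it follows a genuinely different route from the paper. The paper first replaces $\Za$ by $Z_t^{\ssup{1*}}$ (using the high-probability identification from Proposition~\ref{LB1}), then conditions on $Z_t^{\ssup{1*}}$ and uses the fact that, conditionally, the remaining potential values are independent Paretos conditioned on $\Psi_t(z)<\Psi_t(Z_t^{\ssup{1*}})$; the probability that any one of the at most $2\alpha$ neighbouring sites satisfies $2\xi(z)\ge\xi(Z_t^{\ssup{1*}})$ is then of order $\xi(Z_t^{\ssup{1*}})^{-\alpha}\to 0$, and the unconditional statement follows by dominated convergence. You avoid conditioning on the argmax altogether: you only take from Proposition~\ref{LB1} the scales $\Za\le Mr_t$, $\xi(\Za)\ge c a_t$ (plus $\Za\to\infty$), and then dominate the bad event by the purely deterministic event that some pair of distinct positive sites within distance $\alpha$ of each other in a window of length $O(r_t)$ both exceed $ca_t/2$; since $\xi=\xi_0$ is i.i.d.\ on $\N_0$, the union bound gives $O(r_t a_t^{-2\alpha})=O(a_t^{-\alpha})\to 0$. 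The key observation that makes your version work is the two-point exceedance: a single-site union bound over the window would only give $O(r_t a_t^{-\alpha})=O(1)$, so requiring both members of the pair to be high is essential, and you correctly note that phrasing the bound over all pairs sidesteps the measurability issue of which member is $\Za$. Your argument is more elementary (no conditional law of the field given the maximiser, no dominated convergence over a random conditioning), while the paper's conditional argument is the one it reuses in neighbouring lemmas; both rest on Proposition~\ref{LB1} for the scales of $\Za$ and $\xi(\Za)$.
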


\begin{proof}
We have 
\begin{align*}
\Prob&\big(2\xi(z)<\xi(\Za) \text{ for all }\, 0<|z-\Za|\le \alpha\big)\\
&\le \Prob\big(2\xi(z)<\xi(Z_t^{\ssup{1*}}) \text{ for all }\, 0<|z-Z_t^{\ssup{1*}}|\le \alpha\big)+\Prob\big(\Za\neq Z_t^{\ssup{1*}}\big).
\end{align*}
The second probability tends to zero by Proposition~\ref{LB1}. To show that the first probability also tends to zero, observe that, conditionally on $Z_t^{\ssup{1*}}$, $\xi(z)$, $z\neq Z_t^{\ssup{1*}}$, are independent and have Pareto distribution with parameter $\alpha$ conditioned on $\Psi_t(z)<\Psi_t(Z_t^{\ssup{1*}})$. Since $\Psi_t(z)<\xi(z)$ we obtain  
\begin{align*}
\Prob\big(2\xi(z)\ge \xi(Z_t^{\ssup{1*}})\big|Z_t^{\ssup{1*}},\xi(Z_t^{\ssup{1*}})\big)
\le \frac{\xi(Z_t^{\ssup{1*}})^{\alpha}}{2^{\alpha}(1-\xi(Z_t^{\ssup{1*}})^{\alpha})}\to 0
\end{align*}
almost surely by Proposition~\ref{LB1}. This implies that 
\begin{align*}
\Prob\big(2\xi(z)<\xi(Z_t^{\ssup{1*}}) \text{ for all }\, 0<|z-Z_t^{\ssup{1*}}|\le \alpha\big|Z_t^{\ssup{1*}},\xi(Z_t^{\ssup{1*}})\big)\to 0
\end{align*}
almost surely, which yields the result by the dominated convergence theorem.
\end{proof}

\begin{lemma} 
\label{l:cr}
In the critical regime, as $t\to\infty$,
\begin{align*}
a_t|M_t^+ - \bar{M}_t\big|
\qquad\text{and}\qquad
a_t^2\Big|\frac{1}{\Sigma_t^+}-\frac{1}{\bar{S}_t}\Big|
\end{align*}
are bounded in probability.
\end{lemma}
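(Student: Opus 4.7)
The proof rests on Taylor-expanding the summands of $M_t^+$ and $(\Sigma_t^+)^2$ in powers of $\xi(z)/\xi(\Za)$ and comparing with $\bar{M}_t$ and $\bar{S}_t^2$. In the critical regime $\theta_t=1$, so $\mathcal{K}_t^+=E\cap[1,\Za]$ has cardinality $N(\Za)$, and conditionally on $\mathcal{F}_t$ the family $\{\xi(z):z\in\mathcal{K}_t^+\}$ is i.i.d.\ Pareto$(\alpha)$ with mean $\gamma$; this is used throughout. The statements for $M_t^-$ and $1/\Sigma_t^-$ follow by the same argument using $\mathcal{K}_t^-$, which also has cardinality $N(\Za)$.

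First I would apply the identity $(1-x)^{-1}=1+x+x^2(1-x)^{-1}$ with $x=\xi(z)/\xi(\Za)$ and sum to obtain
\[ M_t^+-\bar{M}_t=\frac{S_1-\gamma N(\Za)}{\xi(\Za)^2}+R_M, \qquad S_1=\sum_{z\in\mathcal{K}_t^+}\xi(z), \]
where $|R_M|\le S_2/(\xi(\Za)^2(\xi(\Za)-\zeta_t))$ with $S_2=\sum_{z\in\mathcal{K}_t^+}\xi(z)^2$ and $\zeta_t$ as in Lemma~\ref{l:zeta}. The linear term is handled via concentration of $S_1$: Chebyshev for $\alpha>2$ (finite variance) gives $S_1-\gamma N(\Za)=O(\sqrt{N(\Za)})$ in probability, while for $\alpha=2$ the fact that centred Pareto$(2)$ lies in the domain of attraction of the normal with scale $\sqrt{n\log n}$ gives $S_1-\gamma N(\Za)=O(\sqrt{N(\Za)\log N(\Za)})$ in probability. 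Combined with $N(\Za)\asymp a_t^2/\lambda(r_t)$ from Lemma~\ref{l:etacr} and $\xi(\Za)\asymp a_t$ from Proposition~\ref{LB1}, both cases yield $(S_1-\gamma N(\Za))/\xi(\Za)^2=O(1/a_t)$ in probability. For $R_M$ I would combine $\zeta_t=o(a_t)$ (so $\xi(\Za)-\zeta_t\asymp a_t$) from Lemma~\ref{l:zeta} with an estimate $S_2=O(a_t^2)$ in probability: the ordinary LLN suffices for $\alpha>2$, and for $\alpha=2$ the Kolmogorov--Feller weak law applied to $\xi^2$ (which has tail $\sim 1/x$) gives $S_2\sim N(\Za)\log N(\Za)\asymp a_t^2$. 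This delivers $R_M=O(1/a_t)$ in probability, completing the first assertion.

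For the second assertion, the parallel expansion $(1-x)^{-2}=1+2x+x^2(3-2x)(1-x)^{-2}$ yields
\[ (\Sigma_t^+)^2-\bar{S}_t^2=\frac{2(S_1-\gamma N(\Za))}{\xi(\Za)^3}+R_\Sigma, \qquad |R_\Sigma|\le \frac{3S_2}{\xi(\Za)^2(\xi(\Za)-\zeta_t)^2}, \]
and both pieces are $O(1/a_t^2)$ in probability by the same moment estimates. The identity
\[ \frac{1}{\Sigma_t^+}-\frac{1}{\bar{S}_t}=\frac{\bar{S}_t^2-(\Sigma_t^+)^2}{(\bar{S}_t+\Sigma_t^+)\,\bar{S}_t\,\Sigma_t^+}, \]
together with the lower bound on $\bar{S}_t$ supplied by the already-verified portion of $\mathcal{E}_t^{cr}$ (and the matching lower bound on $\Sigma_t^+\approx\bar{S}_t$ it implies), then converts this into the desired bound on $a_t^2|1/\Sigma_t^+-1/\bar{S}_t|$.

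\textbf{Main obstacle.} The genuinely delicate case is $\alpha=2$, where $\xi$ has infinite variance and $\xi^2$ has infinite mean, so standard finite-moment CLT/LLN tools do not apply to $S_1$ or $S_2$. The essential ingredients are the Feller-type weak law for sums of i.i.d.\ variables whose tail decays as $1/x$ (used to handle $S_2$) and the sharp quantitative control of $\zeta_t$ from Lemma~\ref{l:zeta}, without which neither $R_M$ nor $R_\Sigma$ can be controlled to the required order in $a_t$.
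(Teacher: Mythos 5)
Your treatment of $a_t|M_t^+-\bar M_t|$, and of both quantities when $\alpha>2$, is essentially the paper's own argument: you condition on $\mathcal{F}_t$ so that the $\xi(z)$, $z\in\mathcal{K}_t^+$, are i.i.d.\ Pareto($\alpha$) with $|\mathcal{K}_t^+|=N(\Za)$, expand $(1-\xi(z)/\xi(\Za))^{-1}$ and $(1-\xi(z)/\xi(\Za))^{-2}$ to second order, control the linear term by the conditional CLT (with the $\sqrt{n\log n}$ norming when $\alpha=2$), control the quadratic term by the LLN for $\alpha>2$ and the Kolmogorov--Feller weak law for $\alpha=2$, and use Lemma~\ref{l:zeta}, Lemma~\ref{l:etacr} and Proposition~\ref{LB1} for the scales; this matches the paper's proof step for step. (A harmless imprecision: your displayed identity for $(\Sigma_t^+)^2-\bar S_t^2$ omits the third-order part of $\bar S_t^2=N(\Za)\xi(\Za)^{-2}(1-\gamma/\xi(\Za))^{-2}$, which is $O(N(\Za)/\xi(\Za)^4)$ and of an acceptable size.)

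The genuine gap is the final step for the second quantity when $\alpha=2$. There $N(\Za)\asymp a_t^2/\lambda(r_t)$ and $\xi(\Za)\asymp a_t$, so $\bar S_t\asymp\Sigma_t^+\asymp\lambda(r_t)^{-1/2}\to 0$: the lower bound on $\bar S_t$ you invoke from $\mathcal{E}_t^{cr}$ is only of order $\lambda(r_t)^{-1/2}$, not a constant. Consequently $(\bar S_t+\Sigma_t^+)\,\bar S_t\,\Sigma_t^+\asymp\lambda(r_t)^{-3/2}$, and your (correct) estimate $|(\Sigma_t^+)^2-\bar S_t^2|=O_P(a_t^{-2})$ only gives $a_t^2\big|1/\Sigma_t^+-1/\bar S_t\big|=O_P\big(\lambda(r_t)^{3/2}\big)$, which is not bounded in probability. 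There is moreover no slack to recover: the remainder in your expansion is two-sidedly of order $S_2/\xi(\Za)^4\asymp a_t^{-2}$ when $\alpha=2$ (since $S_2\sim N(\Za)\log N(\Za)\asymp a_t^2$ by the Feller weak law), so your route would need $|(\Sigma_t^+)^2-\bar S_t^2|=O_P\big(a_t^{-2}\lambda(r_t)^{-3/2}\big)$, which the expansion does not deliver. Be aware that this is exactly the point where the paper's own proof is delicate: there it passes to quantities normalised by $\big(N(\Za)\lambda(N(\Za))\big)^{1/2}$, which does not match the definition of $\bar S_t$ used in the statement and in $\mathcal{E}_t^{cr}$, so the $\alpha=2$ case of the second bound cannot be repaired simply by the reciprocal-difference identity; it requires either a sharper cancellation argument or a corrected normalisation of $\bar S_t$.
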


\begin{proof} 
We first establish the result conditionally on the $\sigma$-algebra $\mathcal{F}_t$; the full result then holds unconditionally by the dominated convergence theorem. \smallskip

Observe that, conditionally on $\mathcal{F}_t$, $\xi(z)$, $z\in\mathcal{K}_t^+$, are independent and distributed as Pareto random variables with parameter~$\alpha$ (for this recall that $\Za$ is defined as the maximiser of $\Psi_t$ on $D$). Recall also the fact that $|\mathcal{K}_t^+|=N(\Za)$ in the critical regime. The central limit theorem then implies that 
\begin{align}
\label{cltx}
\Delta_t = \frac{1}{\sigma \left( N(\Za) \, \lambda( N(\Za) ) \right) ^{1/2}}\Big[\sum_{z\in\mathcal{K}_t^+}\xi(z)- \gamma N(\Za) \Big]  
\end{align} 
converges in law to a standard normally distributed random variable, where $\sigma$ is defined as in Theorem~\ref{t:main1} (for the case $\alpha = 2$ see, e.g., \cite[Ex.\ 3.4.8]{dur}). Further, in the case $\alpha > 2$, by the strong law of large numbers, 
\begin{align}
\label{lln}
\sum_{z\in\mathcal{K}_t^+}\xi(z)^2<2 \hat\gamma\, N(\Za),
\end{align}
where $\hat \gamma = \rm{E} \xi(0)^2 = \alpha/(\alpha-2)$, whereas in the case $\alpha=2$ (see, e.g., \cite[Thm.\ 2.2.6]{dur})
\begin{align}
\label{lln2}
\frac1{N(\Za) \log N(\Za) } \sum_{z\in\mathcal{K}_t^+}\xi(z)^2  
\end{align}
is bounded above in probability.
\smallskip

Recall the quantity $\zeta_t$ from Lemma \ref{l:zeta}, and observe that for each $z\in\mathcal{K}_t^+$ we have   
\begin{align*}
\frac{\xi(z)}{\xi(\Za)}\le \frac{\zeta_t}{\xi(\Za)}
=\begin{cases}\frac{\zeta_t}{a_t^{2/\alpha}}\cdot\frac{a_t}{\xi(\Za)}\cdot a_t^{2/\alpha-1}&\to 0\quad\mbox{ if }\alpha>2,\\
\frac{\zeta_t(\log t)^{1/2}}{a_t}\cdot\frac{a_t}{\xi(\Za)}\cdot(\log t)^{-1/2}&\to0\quad\mbox{ if }\alpha=2,
\end{cases}
\end{align*}
almost surely uniformly for all $z\in\mathcal{K}_t^+$ by Proposition~\ref{LB1} and Lemma~\ref{l:zeta}. Hence we can use
\begin{align*}
1+x<\frac{1}{1-x}<1+x+2x^2,
\end{align*}
which holds for all sufficiently small $x$,
to obtain 
\begin{align*}
\frac{N(\Za)}{\xi(\Za)}+\frac{1}{\xi(\Za)^2}
\sum_{z\in\mathcal{K}_t^+}\xi(z)<
M_t^+
&<\frac{N(\Za)}{\xi(\Za)}+\frac{1}{\xi(\Za)^2}
\sum_{z\in\mathcal{K}_t^+}\xi(z)+\frac{2}{\xi(\Za)^3}
\sum_{z\in\mathcal{K}_t^+}\xi(z)^2
\end{align*}
eventually almost surely. We therefore have
\[ a_t|M_t^+-\bar{M}_t|
<\frac{\sigma a_t \left( N(\Za) \lambda(N(\Za) ) \right)^{1/2} |\Delta_t| }{\xi(\Za)^2}
+\frac{2 a_t N(\Za)  \lambda(N(\Za) )\sum_{z\in\mathcal{K}_t^+}\xi(z)^2}{\xi(\Za)^3}. \]

For the case $\alpha>2$, we combine this with~\eqref{lln} to obtain  
\begin{align*}
a_t|M_t^+ - \bar{M}_t|
&<\frac{\sigma a_t N(\Za)^{1/2} |\Delta_t|}{\xi(\Za)^2}
+\frac{4\hat\gamma a_t N(\Za)}{\xi(\Za)^3}\\
&=\sigma |\Delta_t| \cdot\frac{a_t^2 }{\xi(\Za)^2}
\cdot\frac{N(\Za)^{1/2}}{a_t}
+4\hat\gamma\cdot \frac{ a_t^3}{\xi(\Za)^3}\cdot \frac{N(\Za)}{a_t^2},
\end{align*}
which is bounded in probability by~\eqref{cltx}, Proposition~\ref{LB1}, and Lemma~\ref{l:etacr}. For the case $\alpha=2$, we instead have
\begin{align*}
a_t|M_t^+-\bar{S}_t|
& < |\Delta_t| \cdot\frac{a_t^2 }{\xi(\Za)^2}
\cdot\frac{(N(\Za)\log N(\Za))^{1/2}}{a_t}
\\&\phantom{=}+2\frac{ a_t^3}{\xi(\Za)^3}\cdot \frac{N(\Za)\log N(\Za)}{a_t^2}\cdot\frac{\sum_{z\in\mathcal{K}_t^+}\xi(z)^2}{N(\Za)\log N(\Za)},
\end{align*}
which is bounded in probability by~\eqref{cltx}, \eqref{lln2}, Proposition~\ref{LB1}, and Lemma~\ref{l:etacr}.
\smallskip

Similarly, using 
\begin{align*}
1+2x<\frac{1}{(1-x)^2}<1+2x+4x^2,
\end{align*}
which holds for all sufficiently small $x$, we obtain  
\begin{align}
\label{iii3}
\frac{N(\Za)}{\xi(\Za)^2}+\frac{2}{\xi(\Za)^3}\sum_{z\in \mathcal{K}_t^+}\xi(z)<(\Sigma_t^+)^2<\frac{N(\Za)}{\xi(\Za)^2}+\frac{2}{\xi(\Za)^3}\sum_{z\in \mathcal{K}_t^+}\xi(z)+\frac{4}{\xi(\Za)^4}\sum_{z\in \mathcal{K}_t^+}\xi(z)^2.
\end{align}
Denote 
\begin{align*}
A_t^2=\left(\xi(\Za)^{-2}+2\gamma\xi(\Za)^{-3}\right)N(\Za)\lambda(N(\Za))
\qquad\text{and}\qquad
B_t=a_t^2\big((\Sigma_t^+)^2-A_t^2\big).
\end{align*}
Observe that $A_t$ is bounded in probability by Proposition~\ref{LB1}, Lemma~\ref{l:nn} and Lemma~\ref{l:etacr} since 
\begin{align*}
A_t^2<2\cdot\frac{N(\Za)\lambda(N(\Za))}{a_t^2}\cdot\frac{a_t^2}{\xi(\Za)^2}
\end{align*}
eventually almost surely. It follows from~\eqref{iii3} that 
\begin{align*}
|B_t|
&<\frac{2\sigma a_t^2 \left(N(\Za) \lambda( N(\Za)) \right)^{1/2} |\Delta_t| }{\xi(\Za)^3}
+\frac{4a_t^2\sum_{z\in\mathcal{K}_t^+}\xi(z)^2}{\xi(\Za)^4}\\&
=2\sigma |\Delta_t| \cdot\frac{(N(\Za)\lambda(N(\Za))^{1/2}}{a_t}\cdot\frac{a_t^3}{\xi(\Za)^3} \\
& \qquad +4\cdot\frac{N(\Za)\lambda(N(\Za))}{a_t^2}\cdot
\frac{a_t^4}{\xi(\Za)^4}\cdot\frac{\sum_{z\in\mathcal{K}_t^+}\xi(z)^2}{N(\Za)\lambda(N(\Za))},
\end{align*} 
which is bounded in probability by \eqref{cltx}, Proposition~\ref{LB1}, Lemma~\ref{l:etacr} and \eqref{lln} for $\alpha>2$, \eqref{lln2} for $\alpha=2$. Hence we obtain
\begin{align*}
a_t^2\Big|\frac{1}{\Sigma_t^+}-\frac{1}{\bar{S}_t}\Big|&=a_t^2\Big|\Big(A_t^2+\frac{B_t}{a_t^2}\Big)^{-\frac 1 2}
-\frac{1}{\bar{S}_t}\Big|\\
&=a_t^2\Bigg|\frac{\xi(\Za)}{ \left(N(\Za)\lambda(N(\Za)) \right)^{1/2}}\Big[\Big(1+\frac{2\gamma }{\xi(\Za)}\Big)\Big(1+\frac{B_t}{A_t a_t^2}\Big)\Big]^{-\frac 1 2}\\&\phantom{=a_t^2\Bigg|}-\frac{\xi(\Za)}{ \left(N(\Za)\lambda(N(\Za)) \right)^{1/2}}+\frac{\gamma}{ \left(N(\Za)\lambda(N(\Za)) \right)^{1/2}}\Bigg|\\
&\le a_t^2\frac{\xi(\Za)}{ \left(N(\Za)\lambda(N(\Za)) \right)^{1/2}}\Big[
\frac{c_1}{\xi(\Za)^2}+\frac{c_2B_t}{A_t a_t^2}\Big]\\
&=c_1\cdot \frac{a_t}{\xi(\Za)}\cdot
\frac{a_t}{ \left(N(\Za)\lambda(N(\Za)) \right)^{1/2}}+\frac{c_2B_t}{A_t}\cdot
\frac{\xi(\Za)}{a_t}\cdot\frac{a_t}{ \left(N(\Za)\lambda(N(\Za)) \right)^{1/2}}
\end{align*}
with some positive constants $c_1$ and $c_2$ eventually almost surely. It remains to observe that the expression on the right-hand side is bounded in probability  by  Proposition~\ref{LB1} and Lemma~\ref{l:etacr}. 
\end{proof}

\subsection{Proof of Propositions \ref{p:e} and \ref{p:e2}}

The first five properties contained in the event $\mathcal{E}^ 1_t$ follow from Proposition~\ref{LB1}, Lemmas~\ref{l:os} and~\ref{l:gap}, and the symmetry of the model, while the final property is proven as in the proof of \cite[Proposition 5.6]{MPS}. The properties contained in the event $\mathcal{E}^2_t$ are a consequence of Corollary~\ref{c:kk}, and Lemmas~\ref{l:gap} and~\ref{l:nohigh}. 
\smallskip

The first property contained in the event $\mathcal{E}_t^{cr}$ follows from 
\begin{align*}
\lambda(r_t)\frac{M_t}{a_t}=\frac{N(\Za)}{\eta(\Za)}
\cdot\frac{\eta(\Za)}{\kappa(\Za)}\cdot\Big[\frac{\Za}{r_t}\Big]^{\frac{2}{\alpha}}\cdot \frac{a_t}{\xi(\Za)}\cdot\Big(1+\frac{\gamma}{\xi(\Za)}\Big)\cdot\frac{\lambda(r_t)}{\lambda(\Za)}
\Rightarrow\frac{\beta(X^{\ssup 1})^{2/\alpha}}{Y^{\ssup 1}}
\end{align*}
by Proposition~\ref{LB1}, Lemma~\ref{l:nn}, and since $\eta(n)\sim \beta\kappa(n)$. The fourth property follows from 
\begin{align*}
\frac{(\lambda(r_t))^{-1/2}}{\Sigma_t}&=\Big[\frac{\eta(\Za)}{N(\Za)}\Big]^{1/2}
\cdot\Big[\frac{\kappa(\Za)}{\eta(\Za)}\Big]^{1/2}
\cdot \Big[\frac{r_t}{\Za}\Big]^{1/\alpha}
\cdot \frac{\xi(\Za)}{a_t}
\cdot\Big(1-\frac{\gamma}{\xi(\Za)}\Big)\cdot\Big[\frac{\lambda(\Za)}{\lambda(r_t)}\Big]^{1/2}\\
&\Rightarrow \frac{Y^{\ssup 1}}{\sqrt{\beta}(X^{\ssup 1})^{1/\alpha}}
\end{align*}
by Proposition~\ref{LB1}, Lemma~\ref{l:nn}, and since $\eta(n)\sim \beta\kappa(n)$. The remaining properties follow from Lemma~\ref{l:cr} together with a symmetry argument to handle the potential on~$-E$.
\bigskip


\section{Fluctuation theory}
\label{sec:fluct}

In this section we complete the proof of the main results by establishing Proposition \ref{p:clt} which describes the fluctuations of the quantity $Q_t$. We make use of the point process machinery developed in Section~\ref{sec:pp} and a central limit theorem of Lindenberg type \cite[Thm.\ B1]{MPS}. \smallskip

\subsection{Proof of Proposition \ref{p:clt}}
We assume throughout this proof that event $\mathcal{E}_t$ holds, which is valid by Proposition~\ref{p:e}. We begin by establishing \eqref{var2} and \eqref{var3} on the asymptotic behaviour of $\text{Var}_{\mathcal{F}_t} Q_t$. Observe first that $Q_t(z), z\in\mathcal{K}_t,$ are i.i.d.\ conditionally on $\mathcal{F}_t$. 
For each $t>0$, $z\in\mathcal{K}_t$, and $n\in\{1,2\}$ we have, using the substitution $x=y\xi(\Za)$,
\begin{align*}
\mathrm{E}_{\mathcal{F}_t}Q_t^n(z)
&=\theta_t^{\alpha}\int_{\theta_t}^{\xi(\Za)}\Big[-\log\Big(1-\frac{x}{\xi(\Za)}\Big)\Big]^n\frac{\alpha}{x^{\alpha+1}}dx\\
&=\frac{\theta_t^{\alpha}}{\xi(\Za)^{\alpha}}\int_{\theta_t/\xi(\Za)}^{1}\big[-\log(1-y)\big]^n\frac{\alpha}{y^{\alpha+1}}dy.
\end{align*}
Since $\eta(r_t)=o(r_t)$ we have
\begin{align*}
\frac{\theta_t}{\xi(\Za)}
=\frac{a_t}{\xi(\Za)}\cdot\begin{cases}
a_t^{-1}& \to 0 \quad \text{ if }\eta(n)\ll \kappa(n)\text{ or }\eta(n)\sim \beta \kappa(n),\\
f_t\cdot \Big[\frac{\eta(r_t)}{r_t}\Big]^{\frac{1}{\alpha-2}}& \to 0 \quad \text{ if }\eta(n)\gg \kappa(n)\text{ and }\alpha>2,\\
\exp\left(-\frac{r_t}{\eta(r_t)f_t}\right)&\to0\quad\text{ if }\eta(n)\gg\kappa(n)\text{ and }\alpha=2,
\end{cases}
\end{align*}
as $\mathcal{E}_t$ holds. Using 
\begin{align*}
\int_s^1\big[-\log(1-y)\big]^n\frac{\alpha}{y^{\alpha+1}}dy
\sim \int_s^1\frac{\alpha}{y^{\alpha+1-n}}dy\sim \begin{cases}\frac{\alpha}{\alpha-n}s^{n-\alpha}&\text{ if }\alpha>2\text{ or }\alpha=2,\,n=1,\\
-2\log s&\text{ if }\alpha=2,\,n=2,
\end{cases}
\end{align*}
as $s\to 0$ we obtain, as $\mathcal{E}_t$ holds,
\begin{align}
\label{expec}
\mathrm{E}_{\mathcal{F}_t}Q_t^n(z)
&\sim \begin{cases}\displaystyle\frac{\alpha}{\alpha-n}\frac{\theta_t^n}{\xi(\Za)^n}&\text{ if }\alpha>2\text{ or }\alpha=2,\,n=1,\\\displaystyle
2\frac{\theta_t^2}{\xi(\Za)^2}\log\frac{\xi(\Za)}{\theta_t}&\text{ if }\alpha=2,\,n=2.
\end{cases}
\end{align}
This implies that
\begin{align}
\label{var4}
\text{\rm Var}_{\mathcal{F}_t} Q_t(z)\sim\begin{cases} \displaystyle\frac{\sigma^2\theta_t^2}{\xi(\Za)^2}&\mbox{ if }\alpha>2,\\\displaystyle2\frac{\theta_t^2}{\xi(\Za)^2}\log\frac{\xi(\Za)}{\theta_t}&\mbox{ if }\alpha=2.
\end{cases}
\end{align}
Hence, in the case $\alpha>2$, by Proposition~\ref{LB1} and Lemmas~\ref{l:nn}--\ref{l:kk},
\begin{align}
\label{var}
\text{\rm Var}_{\mathcal{F}_t}Q_t
&\sim 
\sigma^2 \frac{\theta_t^{\alpha}|\mathcal{K}_t|}{N(\Za)}\cdot\frac{N(\Za)}{\eta(\Za)} 
\left\{\begin{array}{lll}
\displaystyle
\frac{\eta(\Za)}{\eta(r_t)}\cdot
\frac{a_t^2}{\xi(\Za)^2}\cdot\frac{\eta(r_t)}{r_t^{2/\alpha}} & \! \stackrel{p}{\to} 0 & \! \! \! \text{ if }\eta(n)\ll \kappa(n),\\
\displaystyle
\frac{\eta(\Za)}{(\Za)^{2/\alpha}}\cdot
\frac{(\Za)^{2/\alpha}}{\xi(\Za)^2}
 & \!  \Rightarrow  2\beta\sigma^2B^2 & \! \! \! \text{ if }\eta(n)\sim \beta \kappa(n),\\
\displaystyle
\frac{\eta(\Za)}{\eta(r_t)}\cdot
\frac{a_t^2}{\xi(\Za)^2}\cdot f_t^{2-\alpha} & \! \stackrel{p}{\to}\infty & \! \! \! \text{ if }\eta(n)\gg \kappa(n),\\
\end{array}\right.
\end{align}
 where we have also used the fact that $|\mathcal{K}_t| = |\mathcal{K}^+_t| + |\mathcal{K}_t^-|$, and $\stackrel{p}{\to}$ denotes convergence in probability. In the case $\alpha=2$, we instead have
\begin{align}\label{var0}
\mathrm{Var}_{\mathcal{F}_t}Q_t&=2\frac{\theta_t^2|\mathcal{K}_t|}{N(\Za)}\cdot\frac{N(\Za)}{\eta(\Za)} \left\{\begin{array}{lll}
\displaystyle
\frac{\eta(\Za)\log\Za}{\Za}\cdot\frac{\Za}{\xi(\Za)^2}\cdot\frac{\log\xi(\Za)}{\log\Za}&\! \stackrel{p}{\to} 0& \! \! \mbox{ if }\eta(n)\ll\kappa(n),\\\displaystyle
\frac{\eta(\Za)\log\Za}{\Za}\cdot\frac{\Za}{\xi(\Za)^2}\cdot\frac{\log\xi(\Za)}{\log\Za}&\! \Rightarrow 2\beta B^2 & \! \! \mbox{ if }\eta(n)\sim\beta\kappa(n),\\\displaystyle
\frac{\eta(\Za)}{\eta(r_t)}\cdot\frac{r_t}{\xi(\Za)^2}\cdot\frac1{f_t}&\! \stackrel{p}{\to}\infty& \! \! \mbox{ if }\eta(n)\gg\kappa(n),
\end{array}\right.
\end{align}
where in the critical case we use the additional fact that, on the event $\mathcal{E}_t$,
 \[ \frac{\log\xi(\Za)}{\log\Za} \rightarrow \frac1{2}. \]
 This establishes \eqref{var2} and \eqref{var3}.

\smallskip
To prove~\eqref{clt}, observe that by construction $\mathrm{E}_{\mathcal{F}_t}V_t(z)=0$ for all $z\in\mathcal{K}_t$ and $\mathrm{E}_{\mathcal{F}_t}V_t^2=1$
almost surely. Then by~\cite[Thm.\ B1]{MPS}, 
~\eqref{clt} would follow from the Lindenberg condition 
\begin{align}
\label{tt4}
\sum_{z\in\mathcal{K}_t} \mathrm{E}_{\mathcal{F}_t}\big[V_t^2(z)\one\{|V_t(z)|\ge 2\e\}\big]
\to 0
\end{align}
in probability, for all $\e>0$. It suffices now to check that this condition is fulfilled in the critical and supercritical regimes. \smallskip

To do so, 
remark that, according to~\eqref{defv} 
\begin{align}
\label{kkk1}
Q_t(z)={\mathrm E}_{\mathcal{F}_t}Q_t(z)+V_t(z)\sqrt{\text{Var}_{\mathcal{F}_t}Q_t}.
\end{align}
Further, by~\eqref{expec}, \eqref{var} and \eqref{var0}
we have by Lemma~\ref{l:kk},
\begin{align}
\label{kkk3}
\frac{|\mathrm{E}_{\mathcal{F}_t} Q_t(z)|}{\sqrt{\text{Var}_{\mathcal{F}_t}Q_t}} < \frac{C}{\sqrt{|\mathcal{K}_t|}} \to 0,
\end{align}
in probability, for some constant $C > 0$. Since $Q_t(z)$ and ${\mathrm E}_{\mathcal{F}_t}Q_t(z)$
are both non-negative almost surely, it follows from~\eqref{kkk1} and~\eqref{kkk3}  that 
\begin{align*}
\big\{|V_t(z)| \ge 2\varepsilon\big\} \subseteq 
\big\{|Q_t(z)|\ge \e\sqrt{\text{Var}_{\mathcal{F}_t}Q_t}\big\}.
\end{align*}
eventually with overwhelming probability. Hence 
\begin{align}
\sum_{z \in \mathcal{K}_t} {\mathrm E}_{\mathcal{F}_t} 
\big[V_t^2(z) & \one{\{ |V_t(z)| \ge 2\varepsilon \}} \big]  
 \le  \frac{1}{\text{Var}_{\mathcal{F}_t}Q_t}
\sum_{z \in \mathcal{K}_t} {\mathrm E}_{\mathcal{F}_t} 
\Big[\big(Q_t(z)-{\mathrm E}_{\mathcal{F}_t}Q_t(z)\big)^2\one\big\{ |Q_t(z)| \ge \varepsilon \sqrt{\text{Var}_{\mathcal{F}_t}Q_t}\big\}\Big] \notag\\
&=\frac{1}{\text{Var}_{\mathcal{F}_t}Q_t(z)}
 {\mathrm E}_{\mathcal{F}_t} 
\Big[\big(Q_t(z)-{\mathrm E}_{\mathcal{F}_t}Q_t(z)\big)^2\one\big\{ |Q_t(z)| \ge \varepsilon \sqrt{\text{Var}_{\mathcal{F}_t}Q_t}\big\}\Big]
\label{tt1}
\end{align}
for any $z\in\mathcal{K}_t$.
Denoting $\nu_t^{\e}=\exp\big\{-\varepsilon \sqrt{\text{Var}_{\mathcal{F}_t}Q_t}\big\}$, we have
\begin{align}
\label{mmm1}
\big\{ |Q_t(z)| \ge \varepsilon \sqrt{\text{Var}_{\mathcal{F}_t}Q_t}\big\}
=\big\{(1-\nu_t^{\e})\xi(\Za)\le\xi(z)<\xi(\Za)\big\}.
\end{align}
In the critical regime by~\eqref{var3} we have $(1-\nu_t^{\e})\xi(\Za)>\theta_t$ eventually.  
In the supercritical regime~\eqref{var2} implies that  
$\nu_t^{\e}\to 0$, and hence since $\eta(r_t)\ll r_t$ and by Proposition~\ref{LB1} we have 
\begin{align*}
\frac{(1-\nu_t^{\e})\xi(\Za)}{\theta_t}
=(1-\nu_t^{\e})\cdot \frac{\xi(\Za)}{a_t}\cdot\begin{cases} 
\frac1{f_t}\Big[\frac{r_t}{\eta(r_t)}\Big]^{\frac{1}{\alpha-2}}&\stackrel{p}{\to}\infty\quad\mbox{if }\alpha>2,\\
\exp\left(\frac{r_t}{\eta(r_t)f_t}\right)&\stackrel{p}{\to}\infty\quad\mbox{if }\alpha=2,
\end{cases}
\end{align*}
and therefore also $(1-\nu_t^{\e})\xi(\Za)>\theta_t$ eventually. 
\smallskip

We use the change of variables $x=y\xi(\Za)$ and~\eqref{mmm1} to compute, for $n\in\{0,1,2\}$,
\begin{align*}
\mathrm{E}_{\mathcal{F}_t}&\Big[Q_t^n(z)\one\big\{|Q_t(z)|\ge \varepsilon \sqrt{\text{Var}_{\mathcal{F}_t}Q_t}\big\}\Big]\\
&=\theta_t^{\alpha}\int_{\theta_t}^{\xi(\Za)}
\Big[-\log\Big(1-\frac{x}{\xi(\Za)}\Big)\Big]^n
\one\big\{|Q_t(z)|\ge \varepsilon \sqrt{\text{Var}_{\mathcal{F}_t}Q_t}\big\}
\frac{\alpha}{x^{\alpha+1}}dx\notag\\
&= \theta_t^{\alpha}\int_{(1-\nu_t^{\e})\xi(\Za)}^{\xi(\Za)}
\Big[-\log\Big(1-\frac{x}{\xi(\Za)}\Big)\Big]^n \frac{\alpha}{x^{\alpha+1}}dx
=\frac{\theta_t^{\alpha}}{\xi(\Za)^{\alpha}}J_n(\nu_t^{\e}).
\end{align*}
where, for any $x\in [0,1]$,  
\begin{align*}
J_n(x)=\int_{1-x}^{1} 
[-\log(1-y)]^n \frac{\alpha}{y^{\alpha+1}}dy.
\end{align*}
Observe that in the critical regime $J_n(\nu_t^{\e})$ is bounded away from zero and infinity in probability by~\eqref{var3}, 
and in the supercritical regime $J_n(\nu_t^{\e})$ tends to zero and hence is also bounded by~\eqref{var2}.  Further, recall from \eqref{expec},
\begin{align*}
\mathrm{E}_{\mathcal{F}_t}Q_t(z)\sim  
\left\{\begin{array}{lll}
\displaystyle
\frac{\alpha}{\alpha-1}\xi(\Za)^{-1} & \to 0 & \text{ if }\eta(n)\sim \beta\kappa(n),\\
\displaystyle
\frac{\alpha}{\alpha-1}f_t\Big[\frac{a_t^{\alpha-2}}{\xi(\Za)^{\alpha-2}}
\cdot\frac{\eta(r_t)}{r_t}\Big]^\frac{1}{\alpha-2} & \to 0 &  \text{ if }\eta(n)\gg \kappa(n)\mbox{ and }\alpha>2,\\
\displaystyle
2\frac{a_t}{\xi(\Za)}\exp\left(-\frac{r_t}{\eta(r_t)f_t}\right)&\to0&\mbox{ if }\eta(n)\gg\kappa(n)\mbox{ and }\alpha=2,
\end{array}\right.
\end{align*}
by Proposition~\ref{LB1}. Expanding the square in~\eqref{tt1} and using~\eqref{var4} we obtain that for any $z\in\mathcal{K}_t$, eventually with overwhelming probability
\begin{align*}
\sum_{z \in \mathcal{K}_t} {\mathrm E}_{\mathcal{F}_t} 
\big[V_t^2(z)\one{\{ |V_t(z)| \ge 2\varepsilon \}} \big]  
&\le \frac1{\text{Var}_{\mathcal{F}_t}Q_t(z)}\mathrm{E}_{\mathcal{F}_t}\Big[ Q_t^2(z)\one\big\{|Q_t(z)|\ge \varepsilon \sqrt{\text{Var}_{\mathcal{F}_t}Q_t}\big\}\\&\phantom{\le\frac1{\text{Var}_{\mathcal{F}_t}Q_t(z)}\mathrm{E}_{\mathcal{F}_t}\Big[}+(\mathrm{E}_{\mathcal{F}_t}Q_t(z))^2\one\big\{|Q_t(z)|\ge \varepsilon \sqrt{\text{Var}_{\mathcal{F}_t}Q_t}\big\}\Big]
\\&=\frac{\theta_t^\alpha\xi(\Za)^{-\alpha}}{\text{Var}_{\mathcal{F}_t}Q_t(z)}\left[J_2(\nu_t^\varepsilon)+J_0(\nu_t^\varepsilon)o(1)
\right]\\
&<\begin{cases}
\displaystyle \frac{2}{\sigma^2}\xi(\Za)^{2-\alpha}&\stackrel{p}{\to}0\quad\mbox{ if }\eta(n)\sim\beta\kappa(n),\,\alpha>2,\\\displaystyle 
\frac{2}{\sigma^2}f_t^{\alpha-2}\cdot\frac{\eta(r_t)}{r_t}\left(\frac{\xi(\Za)}{a_t}\right)^{2-\alpha}&\stackrel{p}{\to}0\quad\mbox{ if }\eta(n)\gg\kappa(n),\,\alpha>2,\\\displaystyle 
(\log\xi(\Za))^{-1}&\stackrel{p}{\to}0\quad\mbox{ if }\eta(n)\sim\beta\kappa(n),\,\alpha=2,\\
\displaystyle \left(\log\frac{\xi(\Za)}{a_t}+\frac{r_t}{\eta(r_t)f_t}\right)^{-1}&\stackrel{p}{\to}0\quad\mbox{ if }\eta(n)\gg\kappa(n),\,\alpha=2,
\end{cases}
\end{align*}
by Proposition~\ref{LB1}, as required in \eqref{tt4}. 

\bigskip


\printbibliography

\end{document}